\definecolor{dblue}{rgb}{0,0,.6}
\newtheorem{theorem}{Theorem}[section]
\theoremstyle{plain}
\newtheorem{corollary}[theorem]{Corollary}
\newtheorem{example}[theorem]{Example}
\newtheorem{lemma}[theorem]{Lemma}
\newtheorem{remark}[theorem]{Remark}
\newcommand{\Z}{\mathbb Z}
\newcommand{\Q}{\mathbb Q}
\newcommand{\C}{\mathbb C}
\newcommand{\CP}{\mathbb P}
\newcommand{\im}{\operatorname{im}}
\newcommand{\Pic}{\operatorname{Pic}}
\newcommand{\Aut}{\operatorname{Aut}}
\newcommand{\id}{\operatorname{id}}
\newcommand{\GL}{\operatorname{GL}}
\newcommand{\pr}{\operatorname{pr}}
\newcommand{\Alb}{\operatorname{Alb}}
\newcommand{\Proj}{\operatorname{Proj}}
\newcommand{\dashedlongrightarrow}{\xymatrix@1@=15pt{\ar@{-->}[r]&}}
\renewcommand{\longrightarrow}{\xymatrix@1@=15pt{\ar[r]&}}
\renewcommand{\mapsto}{\xymatrix@1@=15pt{\ar@{|->}[r]&}}
\renewcommand{\twoheadrightarrow}{\xymatrix@1@=15pt{\ar@{->>}[r]&}}
\newcommand{\hooklongrightarrow}{\xymatrix@1@=15pt{\ar@{^(->}[r]&}}
\newcommand{\congpf}{\xymatrix@1@=15pt{\ar[r]^-\sim&}}
\renewcommand{\cong}{\simeq}
\begin{document}

\title[Equality in the Bogomolov--Miyaoka--Yau inequality]{Equality in the Bogomolov--Miyaoka--Yau inequality in the non-general type case} 

\author{Feng Hao}

\address{
KU Leuven, Celestijnenlaan 200B, B-3001 Leuven, Belgium 
}
\email{feng.hao@kuleuven.be}

\author{Stefan Schreieder}


\address{Institut f\"ur Algebraische Geometrie, Leibniz Universit\"at Hannover, Welfengarten 1, 30167 Hannover, Germany}
\email{schreieder@math.uni-hannover.de}

\date{December 23, 2020}
\subjclass[2010]{primary 14J27, 14E30; secondary 14J30, 32Q57} 
 

\keywords{Bogomolov--Miyaoka--Yau inequality, good minimal models, elliptic fibre spaces, classification theory, threefolds.}

\begin{abstract}   
We classify all minimal models  $X$ of dimension $n$, Kodaira dimension $n-1$ and with vanishing Chern number $c_1^{n-2}c_2(X)=0$.
This solves a problem of Koll\'ar.

Completing previous work of Koll\'ar and Grassi, we also show that there is a universal constant $\epsilon>0$ such that any minimal threefold  satisfies either $c_1c_2=0$ or $-c_1c_2>\epsilon$. 
This settles completely a conjecture of Koll\'ar.
\end{abstract}

\maketitle

\section{Introduction}

We work over the field of complex numbers.
A minimal model is a projective terminal $\Q$-factorial variety $X$ such that $K_X$ is nef.
By \cite{BCHM} and \cite{Lai11}, any smooth projective variety $\tilde X$ of dimension $n$ and Kodaira dimension $  \kappa(\tilde{X})\geq n-3$ admits a minimal model $X$ that is birational to $\tilde X$.

A minimal model $X$ is good if some multiple of $K_X$ is base-point free.
The abundance conjecture predicts that any minimal model is good.
By \cite{Ka92,Lai11}, this is known in arbitrary dimension $n$ if  $\kappa(X)\geq n-3$, see Section \ref{sec:kappa=n-1} below. 

If $X$ is an $n$-dimensional minimal model of general type (i.e.\ $\kappa(X)=n$), then we have the  
Bogomolov--Miyaoka--Yau inequality
\begin{align} \label{eq:miyaoka-ineq-gen-type}
(-1)^nc_1^{n-2}c_2(X)\geq (-1)^n\frac{n}{2n+2}c_1^n(X),
\end{align}
see \cite{Yau77} for the smooth case and \cite{GKPT} in general. 
Moreover, equality holds if and only if $X$ is birational to a (mildly singular) ball quotient, see \cite{Yau77,GKPT}. 

If $X$ is a good minimal model of dimension $n$ and Kodaira dimension $\kappa(X)\leq n-1$, then $c_1^n(X)=0$ and the Bogomolov--Miyaoka--Yau inequality simplifies to 
\begin{align} \label{eq:miyaoka-ineq}
(-1)^{n}c_1^{n-2}c_2(X)\geq 0 ,
\end{align} 
which holds by \cite{miyaoka}.
As above, it is natural to ask for a classification of all cases where equality holds in (\ref{eq:miyaoka-ineq}), i.e., $c_1^{n-2}c_2(X)=0$.  
This problem goes back to Koll\'ar, who posed it for threefolds of Kodaira dimension two, see \cite[Remark 3.6]{Kol94}. 
A related question has been studied by  Peternell and Wilson \cite{PW96}, who classify minimal Gorenstein threefolds $X$ where $c_1^2(X)-3c_2(X)$ is numerically trivial (by (\ref{eq:miyaoka-ineq-gen-type}) this implies $\kappa(X)\leq 2$ and the condition is much stronger than $c_1c_2(X)=0$).

Let now $X$ be a good minimal model with $\kappa(X)\leq n-2$.
It is easy to see that $c_1^{n-2}c_2(X)=0$ is automatic in the case where $\kappa(X)<n-2$.
If $\kappa(X)=n-2$, then the general fibre $F$ of the Iitaka fibration $f:X\to S$ is a smooth minimal surface of Kodaira dimenison zero, hence a K3, Enriques, abelian or bi-elliptic surface.
We will show (see Lemma \ref{lem:kappa<n-1}) that in this case
 $c_1^{n-2}c_2(X)=0$ is equivalent to asking that $F$ is an abelian or a bi-elliptic surface. 
 In other words, a good minimal model $X$ with $\kappa(X)\leq n-2$ satisfies $c_1^{n-2}c_2(X)=0$ if and only if the general fibre of the Iitaka fibration is neither a K3-surface nor an Enriques surface.
This simple observation reduces the classification problem of $n$-dimensional good minimal models with $c_1^{n-2}c_2(X)=0$ to the case of Kodaira dimension $n-1$. 
In this paper we solve this problem completely.

\begin{theorem} \label{thm:kappa=n-1} 
Let $X$ be a minimal model with $\dim X=n$ and $\kappa(X)=n-1$.
Then $c_1^{n-2}c_2(X)=0$ if and only if $X$ is birational to a quotient $Y=(T\times E)/G$, where
\begin{enumerate}
\item $Y$ has canonical singularities; \label{item:thm1:Y=sm-in-codim2}
\item $E$ is an elliptic curve and $T$ is a normal projective variety with  $K_T$ ample;
 \label{item:thm1:KT=ample}
\item $G\subset \Aut(T)\times \Aut(E)$ is a finite group which acts diagonally, faithfully on each factor and freely in codimension two  on $T\times E$. 
\label{item:thm1:Gacts-diag}
\end{enumerate}
\end{theorem}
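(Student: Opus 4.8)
The "if" direction is the easier one: given $Y=(T\times E)/G$ as in the theorem, I would verify that $Y$ is a good minimal model with $\kappa(Y)=n-1$ and compute $c_1^{n-2}c_2(Y)=0$. Since $G$ acts freely in codimension two, the quotient map $T\times E\to Y$ is étale in codimension one (in fact in codimension two), so Chern classes pull back, and one reduces to the fact that $c_1^{n-2}c_2(T\times E)=0$ because the $E$-factor contributes a trivial tangent direction and forces a factor of $c_1(E)=0$ in every monomial of top degree. Nefness and ampleness of $K_T$ give $\kappa(Y)=\kappa(T)=n-1$, and canonical singularities are assumed in $(1)$. Birational invariance of the vanishing $c_1^{n-2}c_2=0$ among minimal models (which follows from the fact that any two minimal models are connected by flops, under which this Chern number is preserved—this should be recalled or cited from the earlier sections) then gives the statement for any $X$ birational to such a $Y$.

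For the "only if" direction, the plan is to run the structure theory of the Iitaka fibration. Let $f\colon X\dashrightarrow S$ be the Iitaka fibration; after passing to a good minimal model we may assume $mK_X$ is base-point free and defines an actual morphism $f\colon X\to S$ whose general fibre $F$ is a minimal model of Kodaira dimension zero and dimension one, i.e.\ an elliptic curve. The hypothesis $c_1^{n-2}c_2(X)=0$ should, via the relative structure of an elliptic fibre space, force the fibration to have no multiple fibres in codimension one and no nontrivial variation—morally the $j$-invariant is constant and the monodromy is finite. Concretely I would use a logarithmic Bogomolov–Miyaoka–Yau / Arakelov-type inequality for the relative setting: writing $K_X=f^*K_S + K_{X/S}$ (up to the moduli/discriminant correction à la Kodaira's canonical bundle formula), the Chern number $c_1^{n-2}c_2(X)$ decomposes into a "base" term involving $c_1^{n-2}c_2$ of the orbifold base together with a strictly positive contribution coming from the fibres over the discriminant locus and from the positivity of $f_*\omega_{X/S}$. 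Vanishing then forces the discriminant to be empty in codimension one and $f_*\omega_{X/S}$ to be numerically trivial, which by Kawamata's characterization of such fibre spaces means that, after a finite base change $T'\to S$, the fibre product $X\times_S T'$ is birational to $T'\times E$ for a fixed elliptic curve $E$. Taking $T$ to be a minimal model of $T'$ with $K_T$ ample (using $\kappa(S)=n-1=\dim S$, i.e.\ $S$ of general type, which must itself be forced along the way), and $G=\Gal(T'/S)$ acting diagonally, yields the desired presentation; the freeness in codimension two of the $G$-action is exactly the condition that makes the quotient have at worst canonical singularities and the Chern number descend.

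The main obstacle I expect is establishing the precise positivity statement that makes "$c_1^{n-2}c_2(X)=0$ forces everything to split": one needs a version of the Miyaoka–Yau inequality for elliptic fibre spaces that is an *equality* exactly in the isotrivial (up to base change) case, and one must handle the singularities of $X$, of the base $S$, and of the orbifold structure on $S$ coming from Kawamata's canonical bundle formula. In particular the canonical bundle formula contributes a $\Q$-divisor $B_S$ (the discriminant part) and a moduli part $M_S$ on $S$, and the argument must show that $c_1^{n-2}c_2(X)=0$ implies $B_S=0$ and $M_S\equiv 0$, after which a finite cover trivializes the moduli and the ramification is controlled by $B_S=0$. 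A secondary technical point is to promote the birational splitting $X\times_S T' \sim_{\mathrm{bir}} T'\times E$ to the clean statement with $T$ having $K_T$ ample and $G$ acting freely in codimension two while keeping $Y$ canonical; this is a careful but routine descent-and-minimal-model argument once the geometric input is in place. I would also need the input, presumably proved in an earlier section of the paper, that abundance holds here ($\kappa(X)=n-1\ge n-3$), so that we may freely pass between $X$ and its good minimal models without changing the Chern number.
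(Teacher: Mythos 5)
The central gap is in your ``only if'' direction, at the point where you write that after a finite base change $T'\to S$ trivializing the isotrivial fibration one can take ``$G=\Gal(T'/S)$ acting diagonally''. This is precisely the nontrivial point, and it is false as stated: the Galois action on $X\times_S T'\sim_{\mathrm{bir}} T'\times E$ is a priori only compatible with the projection to $T'$, i.e.\ an element $g$ acts by $(t,e)\mapsto (g\cdot t, g(t)\cdot e)$ with the automorphism $g(t)\in\Aut(E)$ (typically a translation) varying with $t$. Example \ref{ex:CxE} of the paper exhibits a free, genuinely non-diagonal action already for surfaces, so no softness argument can make the action diagonal ``for free''. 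The whole of Section 3 (Theorem \ref{thm:quotient}) is devoted to replacing the triple $(T,E,G)$ by a new triple $(T',E',G')$ with the \emph{same} quotient and a genuinely diagonal action, via the Albanese variety of $Y_0=(T\times E)/G_0$, a $G/G_0$-equivariant map to an auxiliary elliptic curve $F$, a multisection, and Galois-cover manipulations; without this step you cannot obtain item (\ref{item:thm1:Gacts-diag}) of the theorem (nor the quasi-\'etaleness of $T\times E\to Y$ on which everything else, including $K_T$ ample and $Y$ canonical, rests). A second, smaller error in the same direction: you claim $c_1^{n-2}c_2(X)=0$ forces ``no multiple fibres in codimension one'' and $B_S=0$ in the canonical bundle formula. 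That is too strong and contradicts the target statement: quotients $(T\times E)/G$ generically \emph{do} have multiple fibres over the branch divisor of $T\to T/G$, and such fibres contribute to the discriminant $B_S$ but not to the Euler number. The correct equivalence (the paper's Lemma \ref{lem:grassi} and Corollary \ref{cor:grassi}) is that $j$ is constant and every codimension-one singular fibre is a \emph{multiple of a smooth elliptic curve}; it is proved by restricting to a general complete-intersection surface $Z=f^{-1}(C)$ and using Kodaira's table, not an Arakelov-type inequality.

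In the ``if'' direction your sketch also has a soft spot: $c_2(Y)$ need not be well defined ($Y$ is only canonical, and need not be smooth in codimension two), and the passage from $Y$ back to the terminal model $X$ is a terminalization, which extracts divisors, so ``birational invariance of $c_1^{n-2}c_2=0$ among minimal models connected by flops'' does not directly apply. The paper instead restricts to $Z=f^{-1}(C)$ for a general curve $C\subset S$, identifies $Z\cong(\tilde C\times E)/G$ using uniqueness of minimal surface models and the freeness of the action on $\tilde C\times E$ (this is where freeness in codimension two enters), and concludes $c_2(Z)=0$ via the \'etale cover; you would need some such argument to make your step rigorous.
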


The above theorem makes it very easy to construct minimal models $X$ with $\kappa(X)=n-1$ and $c_1^{n-2}c_2(X)=0$.
Indeed, we can  start with any finite subgroup $G\subset \Aut(E)$ of some elliptic curve $E$ and pick an $(n-1)$-dimensional  
canonical model $T$ 
with a faithful $G$-action such that the quotient $Y=(T\times E)/G$ has canonical singularities and such that $G$ acts freely in codimension two on $T\times E$.
Then any terminalization $X^+$ is a minimal model with $\kappa=n-1$ and $c_1^{n-2}c_2=0$.
Moreover, the same holds for any variety $X$ produced via a finite sequence of flops $X^+\dashrightarrow X$. 

Conversely, the following corollary shows that any minimal model $X$ with $\kappa(X)=n-1$ and $c_1^{n-2}c_2(X)=0$ arises this way.
Moreover,  the triple $(T,E,G)$ is uniquely determined by the birational type of the minimal model $X$, and so the above  correspondence is a one to one correspondence.

\begin{corollary} \label{cor:factor-phi}
The following properties 
 are consequences of items (\ref{item:thm1:Y=sm-in-codim2}), (\ref{item:thm1:KT=ample}) and (\ref{item:thm1:Gacts-diag}) in Theorem \ref{thm:kappa=n-1} and hence hold true in the  notation of Theorem \ref{thm:kappa=n-1}:
 \begin{enumerate}[(a)]
 \item $K_Y$ is nef and the birational map $X\dashrightarrow Y$ factors into a sequence of flops $X\dashrightarrow X^{+}$ and a terminalization $X^{+}\to Y$;\label{item:cor:factor-phi:1}
 \item  if $f:X\to S$ is the Iitaka fibration of $X$, then $S\cong T/G$ and $f$ coincides with the natural composition $X\dashrightarrow Y\to T/G$;\label{item:cor:factor-phi:2}
  \item $T$ has canonical singularities and so (as $K_T$ is ample) $T$ is its own canonical model;\label{item:cor:factor-phi:3} 
 \item the varieties $T$ and $E$ as well as the subgroup $G\subset \Aut(T)\times \Aut(E)$ (hence also the quotient $Y=(T\times E)/G$) are uniquely  determined up to isomorphism by the birational equivalence class of $X$.\label{item:cor:factor-phi:4} 
\end{enumerate}    
\end{corollary}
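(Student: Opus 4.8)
The plan is to deduce each item from the structure of the quotient $Y = (T\times E)/G$ together with standard facts about the Iitaka fibration and the minimal model program.

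For item (\ref{item:cor:factor-phi:3}), I would first observe that since $G$ acts freely in codimension two on $T\times E$, the quotient $Y$ is smooth in codimension two and the projection $T\times E\to Y$ is étale in codimension two. The second projection $T\times E\to E$ is $G$-equivariant, and since $G$ acts freely on $E$ (it is a finite group acting on an elliptic curve; if an element fixed $E$ pointwise it would be the identity, and faithfulness on $E$ together with the diagonal action gives that the action on $E$ is by translations composed with group automorphisms — one needs that $G$ acts \emph{freely} on $E$, which should follow from freeness in codimension two combined with the fact that $\Fix(g)$ on $T\times E$ has the form $\Fix(g|_T)\times \Fix(g|_E)$), so $E/G$ is again an elliptic curve and $Y\to E/G$ is a fibre bundle with fibre $T$. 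Pulling back along an isogeny $E\to E/G$ recovers $T\times E$, so $T$ has canonical singularities if and only if $Y$ does, which holds by hypothesis (\ref{item:thm1:Y=sm-in-codim2}). Since $K_T$ is ample, $T$ equals its own canonical model by definition.

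For item (\ref{item:cor:factor-phi:1}), I would show $K_Y$ is nef: the canonical bundle formula for the étale-in-codimension-two cover $p:T\times E\to Y$ gives $p^\ast K_Y = K_{T\times E} = \pr_T^\ast K_T$ (as $K_E=0$), which is nef and big on the pullback along the fibre bundle structure, hence semiample; since $p$ is finite, $K_Y$ is nef. Because $Y$ has canonical singularities and $K_Y$ is nef, $Y$ is a (possibly non-$\Q$-factorial, non-terminal) minimal model, so it admits a terminalization $X^+\to Y$ which is a crepant projective birational morphism with $X^+$ terminal and $\Q$-factorial, i.e. a genuine minimal model. Now $X$ and $X^+$ are both minimal models in the same birational class; by the standard fact that birational minimal models with terminal $\Q$-factorial singularities are connected by a sequence of flops (\cite{Kawamata}, or the relevant statement in \cite{BCHM}), $X\dashrightarrow X^+$ factors as such a sequence, giving (\ref{item:cor:factor-phi:1}).

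For item (\ref{item:cor:factor-phi:2}), I would use that flops and crepant birational morphisms do not change the canonical ring, so the Iitaka fibration of $X$ agrees birationally with that of $Y$; and for $Y$ the pluricanonical maps factor through the fibre bundle $Y\to E/G$ composed with... — more precisely, $|mK_Y|$ pulls back on $T\times E$ to $|m\pr_T^\ast K_T|$, which is the composition with $\pr_T$ and the $m$-canonical map of $T$; taking $G$-quotients and using that $K_T$ is ample (so the $m$-canonical map of $T$ is birational onto $T$ for $m\gg 0$) shows the Iitaka fibration of $Y$ is $Y\to T/G$, and $S\cong T/G$. Finally, for item (\ref{item:cor:factor-phi:4}), uniqueness of $T$ follows because $T$ is the canonical model of a general fibre of the Iitaka fibration of $X$ (a birational invariant), uniqueness of $E$ follows because $E$ is an étale-in-codimension-two cover direction intrinsic to $Y$ — concretely $E/G \cong S_0$ for a distinguished quotient, or $E$ is recovered from the Albanese-type/fibre-bundle data of $Y$ — and then $G$ is recovered as the Galois group of the induced cover $T\times E\to Y$, which is intrinsic. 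The main obstacle I expect is item (\ref{item:cor:factor-phi:4}): making the recovery of $E$ and $G$ canonical requires identifying a birationally intrinsic construction that reproduces the specific elliptic curve $E$ and the embedding $G\hookrightarrow \Aut(T)\times\Aut(E)$ — the natural candidate is to pass to a suitable quasi-étale cover of $Y$ trivializing the fibre bundle structure over (a cover of) $T/G$, but one must check this cover is unique and that $G$ acting faithfully on each factor pins down the embedding up to conjugacy.
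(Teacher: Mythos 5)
Your arguments for items (\ref{item:cor:factor-phi:1}) and (\ref{item:cor:factor-phi:2}) are essentially the paper's: $T\times E\to Y$ is finite and quasi-\'etale because the diagonal action is faithful on each factor, so $K_Y$ pulls back to $\pr_1^\ast K_T$, which is nef; $Y$ is canonical, so a terminalization $X^+\to Y$ exists and is a minimal model, and $X\dashrightarrow X^+$ is a sequence of flops by Kawamata; the Iitaka fibration is preserved under flops and crepant contractions and is computed on $Y$ by the $G$-invariant pluricanonical forms of $T\times E$, giving $S\cong T/G$. (The aside about $\pr_1^\ast K_T$ being "big" and "semiample" is off, but not needed.) However, your argument for item (\ref{item:cor:factor-phi:3}) rests on the claim that $G$ acts freely on $E$, and this is \emph{false} in general: freeness in codimension two on $T\times E$ together with $\Fix(g)=\Fix(g|_T)\times\Fix(g|_E)$ only forces $\Fix(g|_T)$ to have codimension $\geq 2$ in $T$ whenever $g|_E$ has fixed points; for $n\geq 3$ an element may well act on $E$ with fixed points (e.g.\ an element of $\Aut(E,0)$) and on $T$ with fixed locus of codimension two. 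So $E/G$ need not be an elliptic curve and $Y\to E/G$ need not be a fibre bundle with fibre $T$, and the "pull back along an isogeny $E\to E/G$" step collapses. The correct (and shorter) route, which you already have the ingredients for, is: $T\times E\to Y$ is quasi-\'etale and $Y$ is canonical, hence $T\times E$ is canonical by \cite[Proposition 5.20.(3)]{kollar-mori}, hence so is $T$.

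The serious gap is item (\ref{item:cor:factor-phi:4}), which is the main content of the corollary and which you do not prove. Your proposed recovery of $T$ as "the canonical model of a general fibre of the Iitaka fibration of $X$" is simply wrong: the general Iitaka fibre of $X$ is an elliptic curve, and it is $E$ (not $T$) that appears as the general fibre of $Y\to T/G\cong S$; this does give the uniqueness of $E$, but not of $T$. Your fallback ("$E/G\cong S_0$", "fibre-bundle data of $Y$", "a suitable quasi-\'etale cover trivializing the bundle") again leans on the nonexistent bundle structure over $E/G$, and the intrinsic nature of the cover $T\times E\to Y$ is precisely what must be established, not assumed. The paper's proof of (\ref{item:cor:factor-phi:4}) requires real work: given a second triple $(T',E',G')$, one first identifies $T/G\cong T'/G'$ and a birational map $\psi:Y\dashrightarrow Y'$ over it (whence $E\cong E'$ from the generic fibres); one then passes to the normal subgroups $G_0,G_0'$ acting trivially on $H^1$ of the elliptic curve, matches $G/G_0\cong G'/G_0'$ via monodromy of $R^1f_\ast\Z$ and extends the generic isomorphism $T/G_0\cong T'/G_0'$ using Theorem \ref{thm:Zariski}; after this reduction one constructs a morphism $h:Y\to F$ to the elliptic curve $F=\Alb(Y)^\vee/g^\ast\Alb(T/G)^\vee$, shows its Stein factorization is $Y\to E/G$ with connected general fibre $T$, deduces from $\psi$ a birational map $T\dashrightarrow T'$ which is an isomorphism since both are their own canonical models (item (\ref{item:cor:factor-phi:3})), and finally recovers $G\cong G'$ as Galois groups of $T\to T/G$ compatibly with the embeddings into $\Aut(T)\times\Aut(E)$. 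None of this is supplied or replaced by an alternative argument in your proposal, so (\ref{item:cor:factor-phi:4}) remains open as written.
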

 
 By item (\ref{item:cor:factor-phi:4}), the model $Y=(T\times E)/G$ is unique in its birational equivalence class. 
Moreover, item (\ref{item:cor:factor-phi:1}) in Corollary \ref{cor:factor-phi} implies that any minimal model birational to $Y$ is obtained by the composition of a terminalization and a sequence of flops. 
This is in complete analogy with the properties of the canonical model of a variety of general type, and so we may think about $Y$ in Theorem \ref{thm:kappa=n-1} as a suitable replacement of canonical models in our situation. 

The assertion in item (\ref{item:thm1:Gacts-diag}) of Theorem \ref{thm:kappa=n-1}, claiming that the group $G$  acts diagonally on $T\times E$, is crucial.
While diagonal group actions are clearly easier to handle and form a much smaller class than arbitrary group actions, our proof of Theorem \ref{thm:kappa=n-1} depends heavily on the insight that we can arrange the group action to be diagonal.
This is essentially the content of Theorem \ref{thm:quotient} below, which is a key result that allows us to pass from arbitrary group actions on $T\times E$ to diagonal ones.  
The result is nontrivial and somewhat surprising.
In fact, already in the case of surfaces, it is easy to construct examples along the following lines, see Section 5.2 below for detailed constructions. 

\begin{example} \label{ex:CxE}
There is a minimal projective surface $X$ with $\kappa(X)=1$ and $c_2(X)=0$, such that $X=(C\times E)/G$, where $C$ is a smooth projective curve of genus at least two, $E$ is an elliptic curve and $G\subset \Aut(C\times E)$ is cyclic of order three, such that the action of $G$ on $C\times E$ is free but not diagonal, i.e.\ $ G \not\subset \Aut(C)\times \Aut(E)$.
\end{example}

In sharp contrast, Theorem \ref{thm:kappa=n-1} and Corollary \ref{cor:factor-phi} show that any minimal projective surface $X$ with $\kappa(X)=1$ and $c_2(X)=0$ is isomorphic to the quotient of a product $C'\times E'$ of some smooth projective curve $C'$ of genus at least two and an elliptic curve $E'$ by a finite group action that is diagonal and free. 
While it is classically known that bielliptic surfaces can be expressed as quotients of two elliptic curves by a finite group action that is diagonal and free, this analogous statement for minimal surfaces of Kodaira dimension one and vanishing Euler number seems new. 

The fact that $G$ acts diagonally 
 also restricts the possible groups  that can appear in Theorem \ref{thm:kappa=n-1}.
Indeed, $G$ must be a finite subgroup of the automorphism group of an elliptic curve and so it must be an extension $1\to H\to G\to \Z/m\to 1$ of a finite cyclic group of order $m\leq 4$ or $m=6$ by a finite subgroup $H\subset (\Q/\Z)^2$. 
In fact, we will prove also the converse to this observation, giving rise to a complete classification of all groups that appear in Theorem \ref{thm:kappa=n-1}. 

 \begin{corollary} \label{cor:classification-of-groups}
 There is a minimal model $X$ of some dimension $n\geq 2$ and birational to $Y=(T\times E)/G$ as in Theorem \ref{thm:kappa=n-1} if and only if $G$ is a finite subgroup of the automorphism group of an elliptic curve.
 \end{corollary}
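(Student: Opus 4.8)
The plan is to prove the two implications separately. For the ``only if'' direction, suppose $X$ is birational to $Y=(T\times E)/G$ satisfying (\ref{item:thm1:Y=sm-in-codim2})--(\ref{item:thm1:Gacts-diag}). Then by item (\ref{item:thm1:Gacts-diag}) the group $G$ acts faithfully on the factor $E$, so the composition $G\hookrightarrow\Aut(T)\times\Aut(E)\to\Aut(E)$ is injective and $G$ is isomorphic to a finite subgroup of $\Aut(E)$. This direction is thus a one-line consequence of Theorem~\ref{thm:kappa=n-1}.

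For the ``if'' direction, let $G\subset\Aut(E)$ be a finite subgroup of the automorphism group of an elliptic curve $E$. By the discussion following Theorem~\ref{thm:kappa=n-1}, it suffices, for some $n\geq 2$, to exhibit an $(n-1)$-dimensional canonical model $T$ with a faithful $G$-action such that $Y=(T\times E)/G$ has canonical singularities and $G$ acts freely in codimension two on $T\times E$; a terminalization of $Y$ then yields a minimal model $X$ birational to such a $Y$. I would take $T=C$ to be a smooth projective curve of genus at least two carrying a \emph{free} $G$-action. Then $K_C$ is ample, so $C$ is its own canonical model and the $G$-action on $C$ is in particular faithful; since the action on $C$ is free, so is the diagonal action of $G$ on $C\times E$, because $\Fix(g|_{C\times E})=\Fix(g|_C)\times\Fix(g|_E)=\emptyset$ for every $g\neq 1$. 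Hence $G$ acts freely in codimension two on $C\times E$ and the quotient $Y=(C\times E)/G$ is smooth, so in particular it has canonical singularities. All the required conditions thus hold, and as $Y$ is smooth it is its own terminalization, so one may take $X=Y$, a minimal model of dimension $n=2$.

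It remains to recall the classical fact that every finite group acts freely on a smooth projective curve of genus at least two. Let $\mu(G)$ denote the minimal number of generators of $G$, fix an integer $h\geq\max\{2,\mu(G)\}$, and write the fundamental group of a closed oriented genus-$h$ surface as $\pi_1(\Sigma_h)=\langle a_1,b_1,\dots,a_h,b_h\mid [a_1,b_1]\cdots[a_h,b_h]=1\rangle$. The homomorphism $\pi_1(\Sigma_h)\to G$ sending $a_1,\dots,a_{\mu(G)}$ to a fixed generating set of $G$ and all remaining generators to $1$ is well defined (the defining relation maps to $1$) and surjective; the associated connected unramified Galois cover $C\to\Sigma_h$ with deck transformation group $G$ is a smooth projective curve of genus $1+|G|(h-1)\geq 2$ on which $G$ acts freely. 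This provides the curve $C$ used above, and the corollary follows. (Replacing $C$ by $C^{\,n-1}$ with the diagonal $G$-action, which is again free and whose canonical class is again ample, one even obtains examples in every dimension $n\geq 2$.)

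There is no genuinely hard step here: the forward implication is immediate from Theorem~\ref{thm:kappa=n-1}, and the reverse one rests only on the classical realization of an arbitrary finite group as a group of deck transformations of a free cover of a high-genus curve. The one point to get right is that insisting on a \emph{free} $G$-action on $T$ — not merely one that is free in codimension two — makes the quotient $Y$ smooth, which is what lets us sidestep any separate analysis of canonical singularities, terminalizations, or Chern numbers beyond what Theorem~\ref{thm:kappa=n-1} already provides.
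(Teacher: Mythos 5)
Your proposal is correct, and the ``only if'' direction is the same one-line argument as in the paper (faithfulness of the $G$-action on the factor $E$). For the ``if'' direction, however, you take a genuinely different and in fact simpler route. The paper keeps the given pair $G\subset\Aut(E)$ and manufactures a \emph{faithful but not free} $G$-action on an auxiliary curve of genus at least two: it takes the quotient $\pi:E\to C=E/G$, a cover $C'\to C$ branched away from the branch points of $\pi$, and lifts the $G$-action to $E'=C'\times_CE$; it then sets $T=(E')^{n-1}$ with the diagonal action, so the quotient $Y_n=(T\times E)/G$ has isolated fixed points, and it must verify terminality of these quotient singularities via the Reid--Tai age criterion, which forces the dimension restriction $n>|G|$. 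You instead invoke the classical fact that any finite group acts \emph{freely} on a smooth projective curve $C$ of genus $\geq 2$ (surjecting a genus-$h$ surface group onto $G$ and taking the corresponding \'etale cover), so the diagonal action on $C^{n-1}\times E$ is free, the quotient $Y$ is smooth with $K_Y$ nef (pulling back to $\pr_1^\ast K_{C^{n-1}}$ under the \'etale quotient map), and no discrepancy or terminalization analysis is needed; moreover you obtain examples in every dimension $n\geq 2$, including surfaces, rather than only in dimensions $n>|G|$. The only points to keep explicit are the choice of a complex (algebraic) structure on the base surface so that $C$ and the deck action are algebraic, and the nefness of $K_Y$ (or, as you do, a citation of the discussion following Theorem \ref{thm:kappa=n-1}, which rests on Corollary \ref{cor:factor-phi}(a) and is independent of the statement being proved, so there is no circularity). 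The paper's construction has the incidental merit of exhibiting genuinely singular quotients $Y$ with terminal singularities, but as a proof of Corollary \ref{cor:classification-of-groups} your argument is complete and more economical.
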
 

For a minimal model $X$, $c_1^{n-2}c_2(X)$ is a priori only a rational number, because $K_X$ is only $\Q$-Cartier.
The second main result of this paper shows that for minimal threefolds, this number, if nonzero,  is universally bounded away from zero.
This completes work of Koll\'ar \cite{Kol94} and Grassi \cite{gra94} and solves completely a conjecture of Koll\'ar, see 
\cite[Conjecture 3.5]{Kol94}.

\begin{theorem} \label{thm:c1c2>epsilon}
There is a positive constant $\epsilon>0$, such that for any minimal model $X$ of dimension three, we have either $c_1c_2(X)=0$ or $-c_1c_2(X)\geq\epsilon$.
\end{theorem}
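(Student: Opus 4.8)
The plan is to reduce the statement to a finiteness phenomenon, using the classification of the equality case from Theorem \ref{thm:kappa=n-1} together with the general theory of minimal threefolds. First I would dispose of the general type case: if $\kappa(X)=3$, then by the Bogomolov--Miyaoka--Yau inequality (\ref{eq:miyaoka-ineq-gen-type}) we have $-c_1c_2(X)\geq -\tfrac{3}{8}c_1^3(X)$, and since $K_X$ is nef and big, $c_1^3(X)=K_X^3$ is a positive rational number with bounded denominator (controlled by the Gorenstein index of the terminal singularities, which is universally bounded for threefolds); hence $-c_1c_2(X)$ is bounded away from zero in this range. If $\kappa(X)\leq 1$, then $c_1^3(X)=0$ and one checks directly that $c_1c_2(X)=0$ — indeed for $\kappa(X)\leq 0$ this is classical, and for $\kappa(X)=1$ the general fibre of the Iitaka fibration is a minimal surface of Kodaira dimension zero so Lemma \ref{lem:kappa<n-1} (the $n=3$, $\kappa=n-2$ case) identifies the vanishing of $c_1c_2$ with the fibre being abelian or bi-elliptic; the K3 and Enriques cases produce a \emph{positive} contribution which I must show is bounded below (this is already part of Koll\'ar's and Grassi's work, via the fact that a K3 or Enriques fibration contributes at least one in an Euler-characteristic count).

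So the crux is the case $\kappa(X)=n-1=2$. Here I would argue by contradiction: suppose there is a sequence of minimal threefolds $X_i$ with $0<-c_1c_2(X_i)\to 0$. The key point is that $c_1c_2(X)$ is, up to the bounded Gorenstein index, an integer-type invariant; more precisely, by Reid's Riemann--Roch / the orbifold Euler characteristic formula for terminal threefolds, $c_1c_2(X)=24\chi(\OO_X)-\sum_{p}(\text{local terms})$ where the local terms at terminal singularities lie in a fixed discrete set of rationals. Combining with the fact that $\chi(\OO_X)$ is an integer and that for $K_X$ nef with $\kappa(X)=2$ one has explicit control (via the Iitaka fibration over a surface and the canonical bundle formula of Fujino--Mori) on the range of $\chi(\OO_X)$ relative to $c_1c_2$, the hypothesis $-c_1c_2(X_i)\to 0$ forces $c_1c_2(X_i)=0$ for $i\gg 0$. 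This is the essential mechanism: there is no room for a small nonzero value because the invariant is built from integers and from finitely many possible fractional local contributions.

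The main obstacle, and the step requiring the most care, is exactly this discreteness argument in the $\kappa=2$ case: one must show that the $\Q$-number $c_1c_2(X)$ takes values in a discrete subset of $\R$ near $0$. The natural route is to pass to a terminalization and then a crepant resolution is not available, so instead I would use the behaviour of $c_1c_2$ under the Iitaka fibration $f:X\to S$: write $K_X=f^*(K_S+\Delta_S+M_S)$ via the canonical bundle formula, where the moduli part $M_S$ and the discriminant $\Delta_S$ have bounded denominators (by the boundedness of the relevant fibre types — elliptic and the finitely many Kodaira fibre configurations), and the generic fibre is an elliptic curve or (after base change) a product situation governed by Theorem \ref{thm:kappa=n-1}. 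Then $c_1c_2(X)$ decomposes into a contribution from the base and a contribution supported over the discriminant locus, each term lying in a lattice of bounded covolume. Once this is set up, $-c_1c_2(X_i)\to 0$ with all terms in a fixed discrete set yields $c_1c_2(X_i)=0$ eventually, and then Theorem \ref{thm:kappa=n-1} tells us exactly which $X_i$ these are — a contradiction with $-c_1c_2(X_i)>0$. I expect the technical heart to be verifying the uniform bound on denominators of the local and moduli contributions; everything else is bookkeeping with Riemann--Roch and the classification.
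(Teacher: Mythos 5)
Your case division matches the paper's, and the outer cases are essentially fine in spirit ($\kappa=3$ via BMY plus a universal lower bound on the volume, $\kappa\le 1$ deferred to Koll\'ar--Grassi, as the paper also does by citing \cite[Theorem 1.5]{gra94}). But the crux, $\kappa=2$, contains a genuine gap. Your discreteness mechanism rests on the claim that the local contributions in Reid's orbifold Riemann--Roch, and the discriminant/moduli parts in the canonical bundle formula, lie in a set with \emph{universally bounded denominators}. This is false a priori: terminal threefold singularities have unbounded Cartier index (e.g.\ the cyclic quotient singularities $\frac{1}{r}(1,-1,b)$ are terminal for every $r$ with $\gcd(b,r)=1$), and multiple fibres contribute coefficients $1-\frac{1}{m}$ with $m$ arbitrary; sums of finitely many such terms, with the number of terms and the indices unbounded over a sequence $X_i$, are not confined to any discrete subset of $\R$ near $0$. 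Producing a universal bound on the relevant denominators in the generically isotrivial case is precisely the new content of the paper: Theorem \ref{thm:quotient-type} (which rests on the diagonalization result, Theorem \ref{thm:quotient}) produces a free normal subgroup $G_0\subset G$ of index $\le 4$ or $6$, whence Corollary \ref{cor:Cartier-index} bounds the Cartier index of $X$ and Corollary \ref{cor:c1c2>epsilon-constant-j} gives $4c_1c_2(X)\in\Z$ or $6c_1c_2(X)\in\Z$, so that $\epsilon=\frac{1}{6}$ works in this case. Your appeal to Theorem \ref{thm:kappa=n-1} cannot substitute for this: the classification of the locus where $c_1c_2=0$ gives no lower bound on the nonzero values, and your final contradiction step is circular, since the discreteness you feed into it is exactly what has to be proved. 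You also do not separate the non-isotrivial $\kappa=2$ case, which is not covered by your lattice heuristic either and in the paper is handled by \cite[Proposition 2.5]{gra94} together with \cite{Kol94}.

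A secondary but real error occurs in the general type case: the Gorenstein index of terminal threefold singularities is \emph{not} universally bounded (same examples as above), so $K_X^3$ does not have bounded denominator for that reason; indeed, were your claim true, the entire theorem would be almost immediate in every Kodaira dimension. The correct input, which the paper uses, is the universal lower bound on $K_X^3$ for minimal threefolds of general type coming from the boundedness results of \cite{HM06,Tak06}.
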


By work of Koll\'ar \cite{Kol94} and Grassi \cite{gra94}, and the boundedness of threefolds of general type and bounded volume \cite{HM06,Tak06}, Theorem \ref{thm:c1c2>epsilon} reduces to the case where $X$ has Kodaira dimension two and  the Iitaka fibration $X\to S$ is generically isotrivial. 
In this paper we will settle this remaining case.

Theorems \ref{thm:kappa=n-1} and \ref{thm:c1c2>epsilon} will both be deduced from the following 
theorem, which classifies all minimal models of dimension $n$ and Kodaira dimension $n-1$ whose Iitaka fibration is generically isotrivial. 

\begin{theorem} \label{thm:quotient-type}
For a minimal model $X$ of dimension $n$ and Kodaira dimension $n-1$, the following are equivalent.
\begin{enumerate}
\item The Iitaka fibration $f:X\to S$ is generically isotrivial, i.e.\ the fibres of $f$ over a dense open subset of $S$ are elliptic curves with constant $j$-invariants.\label{item:quotient-type:1}
\item  \label{item:quotient-type:2}
There is an elliptic curve $E$, a normal projective variety $T$ with ample canonical class $K_T$, and a finite subgroup $G\subset \Aut(T)\times \Aut(E)$ with the following properties.
The action of $G$ on $T\times E$ is diagonal and faithful on each factor, such that the quotient $Y:=(T\times E)/G$ has only canonical singularities. 
Moreover, there is a commutative diagram 
$$
\xymatrix{
X \ar[dr]_f \ar@{-->}[r]^-\phi &X^+\ar[r]^-\tau & Y=(T\times E)/G \ar[dl]^{g}\\
& S  \cong T/G & ,
}
$$
where $f$ is the Iitaka fibration of $X$, $\phi$ is a composition of flops, $\tau$ is a terminalization, and $g$ is induced by the projection $T\times E\to T$. 
Moreover,  there is a normal subgroup $G_0\subset G$ of index $\leq 4$ or $6$ whose action on $T\times E$ is free. 
\end{enumerate}
\end{theorem}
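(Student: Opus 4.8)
We prove the two implications separately. The implication $(2)\Rightarrow(1)$ is essentially formal; the substance lies in $(1)\Rightarrow(2)$, whose core — and main difficulty — is the passage from an arbitrary group action to a diagonal one (compare Example~\ref{ex:CxE}). For $(2)\Rightarrow(1)$: by the given diagram $f=g\circ\tau\circ\phi$. Since $G$ acts faithfully on the irreducible variety $T$, the set $\bigcup_{\gamma\neq e}\Fix(\gamma\vert_T)$ is a proper closed subset of $T$, and over a point $t$ in its complement the fibre of $g\colon Y\to T/G$ equals $(Gt\times E)/G\cong E$; thus $g$ is an elliptic fibre space which is generically a trivial product, and hence so is $f$ over a dense open of $S$. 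In particular $f$ is generically isotrivial.

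For $(1)\Rightarrow(2)$, let $S^\circ\subseteq S$ be the dense open over which $f$ restricts to a smooth elliptic fibre space, with fibre the (constant, by hypothesis) elliptic curve $E$. \textbf{Step 1 (trivializing base change).} Over $S^\circ$ the family is a twist of the constant family $S^\circ\times E$; killing first the $\Aut(E,0)$-part by a finite \'etale cover and then the remaining genus-one torsor by adjoining a multisection, and then passing to a Galois closure and a projective compactification over $S$, we obtain a normal projective variety $W$ carrying a finite surjective Galois morphism $W\to S$, say with group $G$, such that $X\times_S W$ is birational over $W$ to the constant family $W\times E$. Hence $X$ is birational to $(W\times E)/G$ for some $G$-action on $W\times E$ which lies over the Galois action on $W$ and is faithful on $W$; this action, however, a priori only has the twisted form $\gamma\cdot(w,e)=(\gamma\cdot w,\alpha_\gamma(w)(e))$ with $\alpha_\gamma\colon W\to\Aut(E)$ a morphism depending on $w$, and it need not be diagonal, as Example~\ref{ex:CxE} shows.

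\textbf{Step 2 (diagonalization — the main obstacle).} The decisive step is to replace this presentation by a genuinely diagonal one: there exist a normal projective variety $T_0$ and a finite subgroup $G'\subset\Aut(T_0)\times\Aut(E)$, acting diagonally and faithfully on each factor, with $(T_0\times E)/G'$ birational to $X$. This is precisely the diagonalization result stated below as Theorem~\ref{thm:quotient}, and I expect establishing it to be the hard part of the whole argument: one must untwist the cocycle $(\alpha_\gamma)$ — equivalently, absorb the $W$-dependence of the gluing automorphisms of $E$ into a modification of the base — while simultaneously keeping the base projective, of general type, and the resulting quotient only mildly singular, and it is the claim that all of this can be arranged at once that is nontrivial and somewhat surprising.

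\textbf{Step 3 (canonical model and conclusion).} Since $T_0\times E\to(T_0\times E)/G'$ is finite surjective we get $n-1=\kappa(X)=\kappa\big((T_0\times E)/G'\big)\leq\kappa(T_0\times E)=\kappa(T_0)\leq\dim T_0=n-1$, so $T_0$ is of general type. Replace $T_0$ by its canonical model $T:=\Proj\bigoplus_{m\geq 0}H^0(T_0,mK_{T_0})$; then $T$ is normal projective with $K_T$ ample and canonical singularities, and since $T$ is of general type one has $\Aut(T_0)\hookrightarrow\Bir(T_0)=\Bir(T)=\Aut(T)$, so the $G'$-action descends and $G'\subset\Aut(T)\times\Aut(E)$ is still diagonal and faithful on each factor, with $Y:=(T\times E)/G'$ birational to $X$. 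One then checks that $K_Y$ is nef, being the descent of $p_T^*K_T$, and that $Y$ has only canonical singularities (a Reid--Tai verification, in which the terminality of $X$ enters via the negativity lemma on a common resolution of $X$ and $Y$). Now pick a $\Q$-factorial terminalization $\tau\colon X^+\to Y$; then $X^+$ is a minimal model birational to $X$, hence joined to $X$ by a sequence of flops $\phi\colon X\dashrightarrow X^+$ (Kawamata), and since $\phi$ and $\tau$ are isomorphisms in codimension one the Iitaka fibration of $X$ is $g\circ\tau\circ\phi$, where $g\colon Y\to T/G'\cong S$ is induced by the projection $T\times E\to T$; this yields the asserted commutative diagram. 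Finally $\Aut(E)$ sits in an extension $1\to E\to\Aut(E)\to C\to1$ with $C$ cyclic of order $2$, $4$ or $6$, so $G_0:=\ker(G'\to C)$ is a normal subgroup of $G'$ of index $\leq4$ or $6$; as $G'$ acts faithfully on $E$, every non-trivial element of $G_0$ acts on $E$ by a non-zero translation and hence without fixed points, so $G_0$ acts freely on $T\times E$. This completes the proof.
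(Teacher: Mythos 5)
Your overall architecture matches the paper's: trivialize the family over a dense open subset by a finite Galois cover, extend the (a priori twisted) action to a projective model, invoke the diagonalization Theorem~\ref{thm:quotient} (which you are entitled to use as a separate result of the paper), pass to a canonical model of the base, and assemble the diagram via a terminalization and Kawamata's flop theorem, with $G_0$ extracted from the structure of $\Aut(E)$ exactly as in the paper. However, there is a genuine gap at the heart of your Step 3: you assert, essentially without proof, the three claims that $Y=(T\times E)/G'$ has canonical singularities, that $T/G'\cong S$, and (implicitly) that the birational map $X\dashrightarrow Y$ is crepant, i.e.\ $K_X\sim_\Q\varphi^\ast K_Y$. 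None of these is automatic: a quotient of a canonical model by a diagonal action that is merely free in codimension one need not be canonical, and after you replace $T_0$ by its \emph{absolute} canonical model the quotient $T/G'$ is a priori only birational to $S$ (all the more so because Theorem~\ref{thm:quotient} must be applied to a smooth equivariant compactification, whose quotient only maps birationally onto $S$). These assertions are precisely the content of the paper's Steps 4--5: the paper works with the $G$-equivariant \emph{relative} canonical model of the base over $S$, proves via a negativity-type argument (relative ampleness of $K_T$ over $S$ together with \cite[Lemma 3.39]{kollar-mori}) that $\varphi:X\dashrightarrow Y$ extracts no divisors, deduces $\varphi_\ast K_X\sim K_Y$, concludes that $T/G\to S$ is finite birational hence an isomorphism, and only then obtains canonicity of $Y$ by computing discrepancies on a common resolution using the terminality of $X$ and $K_X\sim_\Q\varphi^\ast K_Y$. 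Your parenthetical ``Reid--Tai verification'' is not a viable substitute: the local ages of the stabilizers are not controlled in this generality (Reid--Tai is used in the paper only for the special construction in Corollary~\ref{cor:classification-of-groups}), and the canonicity of $Y$ really comes from the global comparison with $X$, which in turn hinges on the unproven crepancy. Note also that your choice of the absolute rather than relative canonical model makes it unclear how to run the needed no-extracted-divisor argument at all, since the identity of the divisors contracted by $T_0\dashrightarrow T$ with those contracted by $T_0\to S$ is exactly what is at stake.

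Two smaller points: a terminalization $\tau$ is \emph{not} an isomorphism in codimension one (it extracts divisors of discrepancy zero), so your justification that $f=g\circ\tau\circ\phi$ should instead use that $\tau$ is crepant (so $K_{X^+}=\tau^\ast K_Y$ and the Iitaka fibrations are compatible) together with the fact that flops preserve the Iitaka fibration, as in Lemma~\ref{lem:Iitaka-flops}; and extending the twisted action $\alpha_\gamma$ to a projective model requires a smooth ($G$-equivariantly resolved) base so that the rational maps to $\Aut(E)$ become morphisms, a point you gloss over but which the paper handles explicitly.
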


Again, the fact that $G$ acts diagonally (which relies on the aforementioned Theorem \ref{thm:quotient} below) is not only a convenient statement, but it is also essential for our proof (e.g.\ for the existence of $\phi$ and $\tau$). 
The main point is that a diagonal action of a finite group $G$ on $T\times E$ that is faithful on each factor is automatically free in codimension one and so $T\times E\to (T\times E)/G$ is quasi-\'etale. 
This property will be crucial, as it allows to translate between the birational geometry of $T\times E$ and that of the quotient  $Y=(T\times E)/G$ in an effective way (e.g.\ $K_{T\times E}$ is nef if and only if $K_Y$ nef).

In Theorems \ref{thm:kappa=n-1} and \ref{thm:quotient-type}, the projective variety $T$ has automatically canonical singularities, see Corollary \ref{cor:T=canonical}.
Since canonical surfaces are Gorenstein, the assertion about the normal subgroup $G_0\subset G$ in Theorem \ref{thm:quotient-type} will lead us to the following qualitative statement in dimension three.

\begin{corollary} \label{cor:Cartier-index}
Let $X$ be a minimal model of dimension three and of Kodaira dimension two.
Assume that the Iitaka fibration $f:X\to S$ is generically isotrivial.

Then the Cartier index of $X$ is $\leq 4$ or $6$.
In particular, $12K_X$ is Cartier.
\end{corollary}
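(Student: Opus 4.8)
The plan is to read this off from Theorem \ref{thm:quotient-type}, together with the fact recalled just above that canonical surface singularities are Gorenstein.

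Since $X$ is a threefold with $\kappa(X)=2=\dim X-1$ whose Iitaka fibration is generically isotrivial, Theorem \ref{thm:quotient-type} applies. It produces an elliptic curve $E$, a normal projective surface $T$ with $K_T$ ample, a finite group $G\subset\Aut(T)\times\Aut(E)$ acting diagonally, faithfully on each factor and freely in codimension one, such that $Y:=(T\times E)/G$ has canonical singularities, together with a composition of flops $\phi\colon X\dashrightarrow X^+$ and a terminalization $\tau\colon X^+\to Y$, and moreover a normal subgroup $G_0\subset G$ of index $m:=[G:G_0]\le 4$ or $m=6$ whose action on $T\times E$ is free. By Corollary \ref{cor:T=canonical} the surface $T$ has canonical, hence Du Val, singularities, so $T$ is Gorenstein; consequently $T\times E$ is Gorenstein, its dualizing sheaf being the line bundle $\omega_T\boxtimes\omega_E$.

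The first step is to bound the Cartier index of $Y$ by $m$. As $G_0$ acts freely on $T\times E$, the quotient $W:=(T\times E)/G_0$ is \'etale-locally isomorphic to $T\times E$, hence is normal and Gorenstein, so $K_W$ is Cartier. Put $H:=G/G_0$, a group of order $m$, so that $Y=W/H$. Since $G$ acts freely in codimension one on $T\times E$, the induced $H$-action on $W$ is again free in codimension one — a point of $W$ with nontrivial $H$-stabiliser is the image of a point of $T\times E$ with nontrivial $G$-stabiliser, and the latter form a closed subset of codimension $\ge 2$ whose image in $W$ again has codimension $\ge 2$ — so $\pi\colon W\to Y$ is quasi-\'etale. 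Now I would apply the standard descent argument for equivariant line bundles: $\omega_W$ is canonically $H$-linearised, and for each $w\in W$ the stabiliser $H_w$ acts on the fibre $\omega_W|_w$ through a character whose order divides $|H_w|$, hence $m$; therefore $H_w$ acts trivially on $\omega_W^{\otimes m}|_w$ for every $w$, and so $\omega_W^{\otimes m}$ descends to a line bundle on $Y$. This line bundle agrees with the reflexive sheaf $\OO_Y(mK_Y)$ over the open locus where $\pi$ is \'etale, whose complement has codimension $\ge 2$, hence everywhere; thus $mK_Y$ is Cartier, and the Cartier index of $Y$ divides $m$.

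The second step transfers the bound to $X$. The terminalization $\tau\colon X^+\to Y$ is crepant, i.e.\ $K_{X^+}=\tau^*K_Y$, so $mK_{X^+}=\tau^*(mK_Y)$ is Cartier and the Cartier index of $X^+$ divides $m$. Finally $\phi\colon X\dashrightarrow X^+$ is a composition of flops between minimal threefolds, and the Cartier index of the canonical divisor of a terminal threefold is invariant under flops — the singularity basket of a terminal threefold, in the sense of Reid, is a flop invariant, and the Cartier index of $K$ is the least common multiple of the indices occurring in the basket. Hence the Cartier index of $X$ equals that of $X^+$ and divides $m$. Since $m\le 4$ or $m=6$, every divisor of $m$ lies in $\{1,2,3,4,6\}$; in particular the Cartier index of $X$ is $\le 4$ or $6$, and as each of $1,2,3,4,6$ divides $12$, the divisor $12K_X$ is Cartier.

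I expect the real content to be entirely contained in Theorem \ref{thm:quotient-type} (and, through it, in Theorem \ref{thm:quotient}); granting that, the corollary is short. Of the two remaining steps, the one requiring the most care is the passage along $\phi$, which relies on the flop-invariance of the Cartier index of the canonical class of a terminal threefold. The descent step bounding the index of $Y$ by $m$ is routine, but it genuinely exploits that the diagonal $G$-action is free in codimension one — the feature stressed after Theorem \ref{thm:quotient-type} — which is exactly what makes $W\to Y$ quasi-\'etale while keeping $K_W$ Cartier.
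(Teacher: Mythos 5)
Your proof is correct and follows essentially the same route as the paper: invoke Theorem \ref{thm:quotient-type} and Corollary \ref{cor:T=canonical}, note that $T\times E$ and hence the free quotient $(T\times E)/G_0$ are Gorenstein, descend along the quasi-\'etale map to $Y$ to bound its index by $[G:G_0]\le 4$ or $6$ (the paper cites \cite[Proposition 5.20]{kollar-mori} where you give the standard linearization/descent argument), and then pass through the crepant terminalization and the flops. The only divergence is the last step: the paper proves the elementary Lemma \ref{lem:Cartier-index} (via the cone theorem, $mK$ descends to the base of the flopping contraction and pulls back to $X^+$), whereas you invoke flop-invariance of the analytic singularities/basket of a terminal threefold --- a valid but considerably heavier input than needed.
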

In particular, for any minimal model $X$ of dimension three and of Kodaira dimension two whose Iitaka fibration is generically isotrivial, $4c_1c_2(X)$ or $6c_1c_2(X)$ is an integer.
This implies that one can take $\epsilon=\frac{1}{6}$ in Theorem 1.5 in the remaining case of Kodaira dimension 2 and generically trivial
Iitaka fibration.  

Note also that the Iitaka fibration is generically isotrivial if $c_1c_2(X)=0$ (see Lemma \ref{lem:grassi} below) and so the above corollary bounds the Cartier index of $K_X$ in this situation. 
  
\section{Preliminaries}
\subsection{Conventions and notation}\label{subsec:convention}
We work over the field of complex numbers.
A variety is an integral separated scheme of finite type over $\C$.
A minimal model is a projective variety $X$ with terminal $\Q$-factorial singularities such that $K_X$ is nef.

An open subset of a variety is big if its complement has codimension at least two.

Linear equivalence of divisors is denoted by $\sim$ and $\Q$-linear equivalence by $\sim_\Q$.
In particular, $D_1 \sim_{\Q} D_2$ if and only if $mD_1\sim mD_2$ for some positive integer $m$.

For a birational map $\varphi:X\dashrightarrow Y$ between projective varieties $X$ and $Y$, we denote for any $\Q$-Cartier $\Q$-divisor $D$ on $X$ by $\varphi_\ast D$ the $\Q$-divisor on $Y$ that is obtained by pulling back $D$ to a common resolution and pushing that pullback down to $Y$.
For a $\Q$-Cartier $\Q$-divisor $D'$ on $Y$, the pullback $\varphi^\ast D'$ is defined similarly and coincides with $\varphi^{-1}_\ast D'$ that we have just defined.

An elliptic fibre space is a normal quasi-projective variety $X$ with a pojective morphism $f:X\to S$ to a normal quasi-projective variety $S$ whose general fibre is an elliptic curve.
We say that $f$ has trivial monodromy, if $R^1f_\ast \Q$ restricts to a trivial local system over some non-empty Zariski open subset $U\subset S$.  

If $X$ is a variety which is smooth in codimension two (e.g.\ terminal), then $c_2(X)$ denotes the codimension two cycle on $X$, given by the closure of the second Chern class of the tangent bundle of the smooth locus of $X$.
If additionally $K_X$ is $\Q$-Cartier, then the intersection numbers $c_1^n(X):=(-K_X)^n$ and $c_1^{n-2}c_2(X):=(-K_X)^{n-2}c_2(X)$ are well-defined rational numbers, where $n=\dim X$.

\subsection{Terminalizations} \label{subsec:terminalization}

A proper birational morphism $\tau:Y'\to Y$ between normal varieties with $K_Y$ $\Q$-Cartier is a crepant birational contraction if $\tau^\ast K_{Y}=K_{Y'}$.
A terminalization of a variety $Y$ with canonical singularities is a crepant contraction $\tau:Y'\to Y$ from a $\Q$-factorial and terminal variety $Y'$. 
If $Y$ is canonical, then a terminalization $Y'$ of $Y$ exists by \cite{BCHM}.
Explicitly, $Y'$ is constructed by taking a resolution $\tilde Y\to Y$ and running a relative minimal model program of $\tilde Y$ over $Y$.

\subsection{$G$-equivariant minimal model program} \label{subsec:G-MMP}

Let $G$ be a finite group.
A variety with a $G$-action is called a $G$-variety.

Let $f:T\to S$ be a $G$-equivariant morphism of projective $G$-varieties with terminal singularities.
Assume that $K_T$ is $f$-big.
Then, by \cite{BCHM}, there is a unique relative canonical model $f^c:T^c\to S$, where $T^c$ has canonical singularities and $K_{T^c}$ is $f^c$-ample.
Explicitly, $T^c=\Proj (\bigoplus_{n\geq 0} f_\ast \omega_T^{\otimes n})$ and so $G$ acts on $T^c$, making  $f^c$ $G$-equivariant.

\subsection{Good minimal models for $\kappa\geq n-3$} \label{sec:kappa=n-1}
By \cite{BCHM} any smooth projective variety $X$ of dimension $n$ and Kodaira dimension $\kappa(X)=n$ admits a minimal model  $X^{min}$ that is birational to $X$.
Moreover, the basepoint free theorem implies that any such minimal model is good, see \cite[Theorem 3.3]{kollar-mori}.
By the  minimal model program in dimension three and a result of Lai \cite{Lai11}, the same result holds true if $\kappa(X)\geq n-3$.

\begin{theorem} \label{thm:good-minmod-kappa=n-1}
Let $X$ be a smooth projective variety of dimension $n$ and  $\kappa(X)\geq n-3$.
Then there is a minimal model $X^{min}$ that is birational to $X$.
Moreover, any such minimal model is good, i.e.\  for some integer $m>0$ the linear system $|mK_{X^{min}}|$ is base point free.
\end{theorem}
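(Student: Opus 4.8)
The plan is to assemble Theorem \ref{thm:good-minmod-kappa=n-1} from known pieces, separating the existence of a minimal model from its goodness, and in each case splitting off the range $\kappa(X)=n$ from the genuinely new range $n-3\le \kappa(X)\le n-1$.

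\smallskip
\textbf{Existence of a minimal model.} First I would recall that by \cite{BCHM} every smooth projective variety of general type, i.e.\ with $\kappa(X)=n$, admits a minimal model birational to it; in fact \cite{BCHM} shows more generally that any smooth projective variety whose canonical ring is finitely generated and which is not uniruled runs a $K_X$-MMP that terminates. For the remaining cases $n-3\le \kappa(X)\le n-1$ I would invoke Lai's theorem \cite{Lai11}: if $X$ has a good minimal model after passing to the generic fibre of its Iitaka fibration — which for $\kappa(X)\ge n-3$ amounts to the generic fibre having dimension $\le 3$, so that the three-dimensional MMP (Mori, Kawamata, Kollár, \dots) applies to it — then $X$ itself has a good minimal model. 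Concretely, the Iitaka fibration $X\dashrightarrow S$ has general fibre $F$ of dimension $n-\kappa(X)\le 3$ with $\kappa(F)=0$, so $F$ admits a good minimal model by the abundance theorem in dimension $\le 3$ \cite{Ka92}; Lai's relative construction then produces $X^{min}$.

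\smallskip
\textbf{Goodness.} For $\kappa(X)=n$, once $K_{X^{min}}$ is nef it is automatically big, so the basepoint free theorem \cite[Theorem 3.3]{kollar-mori} gives that $|mK_{X^{min}}|$ is basepoint free for $m\gg 0$; this is the easy half. For $n-3\le \kappa(X)\le n-1$, goodness is exactly the content (and output) of Lai's argument combined with abundance for the fibres: the minimal model produced is good, and moreover any minimal model birational to a good minimal model is itself good (good minimal models are connected by flops, and being good is a flop-invariant statement, since flops are crepant and isomorphisms in codimension one). So I would phrase the last sentence of the theorem by first establishing goodness for the \emph{particular} $X^{min}$ coming from the construction, and then propagating it to an arbitrary minimal model $X'$ birational to $X$ via the fact that $X^{min}\dashrightarrow X'$ is a composition of flops (by the cone and contraction theorems together with termination of flops in the relevant range, or by directly citing \cite{Kaw08,Lai11}).

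\smallskip
\textbf{Main obstacle.} The only real content is importing \cite{Lai11} correctly: one must check that the hypotheses of Lai's theorem are met, namely that the very general fibre of the Iitaka fibration of $X$ admits a good minimal model, which is where the bound $\kappa(X)\ge n-3$ enters through the three-dimensional abundance theorem. Everything else — existence via \cite{BCHM}, basepoint freeness in the general-type case, and the flop-invariance of goodness — is standard. I would therefore expect the proof to be essentially a careful citation: reduce to the generic fibre, apply abundance in dimension $\le 3$, apply Lai, and note flop-invariance of goodness; no new estimates or constructions are needed here, and the substantive work of the paper begins only afterwards.
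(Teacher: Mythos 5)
Your proposal is correct and takes essentially the same route as the paper, whose entire proof consists of reducing to the full three-dimensional MMP with abundance (\cite{kollar-mori}, \cite{Ka92}) and then citing \cite[Proposition 2.4 and Theorem 4.4]{Lai11}. The extra details you supply (existence via \cite{BCHM} and basepoint freeness in the general type case, and the propagation of goodness to an arbitrary minimal model via flop-invariance) are exactly what the citation of Lai's Proposition 2.4 is meant to cover, so nothing is missing.
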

\begin{proof}
If $n=3$, then the result follows from the fact that the full minimal model program (including the abundance conjecture) is known for threefolds, see \cite{kollar-mori} and \cite{Ka92}.
The general result is therefore a direct consequence of \cite[Proposition 2.4 and Theorem 4.4]{Lai11}.
\end{proof}

\subsection{Good minimal models with $c_1^{n-2}c_2=0$ and $\kappa\leq n-2$}

The following well-known lemma essentially reduces the classification of all good minimal models with $c_1^{n-2}c_2=0$ to the case $\kappa=n-1$, c.f.\ \cite{gra94}.

\begin{lemma}\label{lem:kappa<n-1}
Let $X$ be a good minimal model of dimension $n$ and with Iitaka fibration $f:X\to S$.
If $\kappa(X)\leq n-2$, then $c_1^{n-2}c_2(X)=0$ is automatic if $\kappa(X)\leq n-3$ and it is equivalent to asking that the general fibre of $f$ is an abelian or a bielliptic surface if $\kappa(X)=n-2$. 
\end{lemma}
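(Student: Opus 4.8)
The plan is to exploit that $K_X$ is semiample in order to express $c_1^{n-2}c_2(X)$ through the topological Euler number of a general fibre of $f$, and then to invoke the Enriques--Kodaira classification. Since $X$ is good, $|mK_X|$ is base-point free for some $m>0$, and we may take the Iitaka fibration $f:X\to S$ to be the morphism it defines (after Stein factorisation); then $S$ is a normal projective variety, $\dim S=\kappa(X)$, and there is an ample line bundle $L$ on $S$ with $mK_X\sim f^\ast L$. Writing $A:=\tfrac{1}{m}L$ we get $K_X\sim_\Q f^\ast A$ with $A$ ample, and hence
\[
c_1^{n-2}c_2(X)=(-K_X)^{n-2}\cdot c_2(X)=(-1)^{n-2}\,(f^\ast A)^{n-2}\cdot c_2(X).
\]

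To evaluate the right-hand side I would intersect with general hyperplane sections of $S$. Choose $m'>0$ with $m'A$ a very ample line bundle and pick general members $H_1,\dots,H_{n-2}\in|m'A|$; by Bertini $f^\ast H_1\cap\dots\cap f^\ast H_{n-2}=f^{-1}(H_1\cap\dots\cap H_{n-2})$. If $\kappa(X)\le n-3$, then $\dim S<n-2$, so $H_1\cap\dots\cap H_{n-2}=\varnothing$, whence $(f^\ast A)^{n-2}=0$ and $c_1^{n-2}c_2(X)=0$; this is the first assertion. If $\kappa(X)=n-2$, then $H_1\cap\dots\cap H_{n-2}$ is a reduced set of $d:=(m'A)^{n-2}>0$ general points of $S$, and $f^\ast H_1\cdots f^\ast H_{n-2}$ is the transverse union of the corresponding $d$ general fibres; each of these is a general fibre $F$ of $f$, so $(f^\ast A)^{n-2}\cdot c_2(X)=\tfrac{d}{m'^{\,n-2}}\,\big(F\cdot c_2(X)\big)=A^{n-2}\cdot\big(F\cdot c_2(X)\big)$ and therefore
\[
c_1^{n-2}c_2(X)=(-1)^{n-2}\,A^{n-2}\cdot\big(F\cdot c_2(X)\big).
\]

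It remains to compute $F\cdot c_2(X)$ for a general fibre $F=f^{-1}(p)$. Being general, $F$ is disjoint from $\Sing(X)$ — which has codimension $\ge 3$ since $X$ is terminal — hence lies in the smooth locus of $X$ and is itself smooth by generic smoothness; moreover $p$ can be taken smooth in $S$ and $f$ is smooth along $F$, so $N_{F/X}\cong\OO_F^{\oplus(n-2)}$. The normal bundle sequence $0\to T_F\to T_X|_F\to N_{F/X}\to0$ then gives $c_2(T_X|_F)=c_2(T_F)$, so $F\cdot c_2(X)=e(F)$, the topological Euler number. Furthermore $K_F=K_X|_F\sim_\Q(f^\ast A)|_F=0$, so $F$ is a smooth minimal surface of Kodaira dimension zero, i.e.\ a K3, Enriques, abelian or bielliptic surface, with $e(F)=24,\ 12,\ 0,\ 0$ respectively. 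Since $A^{n-2}>0$, the last display shows that $c_1^{n-2}c_2(X)=0$ if and only if $e(F)=0$, that is, if and only if $F$ is abelian or bielliptic, as claimed.

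The Bertini and generic-smoothness steps and the normal bundle computation are routine; the one point that needs care is to organise the reduction so that one genuinely works with a \emph{general} fibre — one lying in the smooth locus of $X$ and over a smooth point of $S$ — so that its normal bundle is trivial and $c_2(X)$ restricts to $c_2$ of the fibre. The only remaining input is the Enriques--Kodaira classification of minimal surfaces with numerically trivial canonical class, together with their Euler numbers.
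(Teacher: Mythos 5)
Your proof is correct and follows essentially the same route as the paper: reduce $c_1^{n-2}c_2(X)$ to a positive multiple of $c_2(F)=e(F)$ for a general fibre $F$ using $K_X\sim_\Q f^\ast A$ and the triviality of the normal bundle of $F$, then invoke the classification of minimal surfaces with numerically trivial canonical class. You merely spell out the hyperplane-slicing and normal-bundle details that the paper states in one line, so there is nothing to add.
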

\begin{proof}
Since $f$ is the Iitaka fibration, $c_1(X)=-f^\ast A$ for an ample $\Q$-divisor $A$ on $S$ and so $c_1^{n-2}c_2(X)=0$ is automatic if $\kappa(X) \leq n-3$, because $\kappa(X) =\dim S$.
Moreover, if $\kappa(X)=n-2$, then $c_1^{n-2}c_2(X)$ is a nonzero multiple of $c_2(F)$ for a general fibre $F$ of $f$, because $F$ has trivial normal bundle and so $c_2(X)\cdot F=c_2(X)|_F=c_2(F)$.
Hence, $c_1^{n-2}c_2(X)=0$ if and only if $c_2(F)=0$.
Since $F$ has numerically trivial canonical bundle (given by the restriction of $K_X$), the condition $c_2(F)=0$ means that $F$ is an abelian or a bi-elliptic surface, see e.g.\ \cite[Chapter VI, Theorem (1.1)]{BHPV}.
\end{proof}

\subsection{\'Etale and quasi-\'etale morphisms}
A morphism $f:X\to Y$ between varieties is \'etale if it is flat and unramified.
Since we are working over the algebraically closed field $\C$, $f$ is \'etale if and only if for all $x\in X$, the induced morphism between the completed local rings $\widehat {\mathcal O}_{Y,f(x)} \to \widehat {\mathcal O}_{X,x}$ is an isomorphism for all $x\in X$.
If $f$ is finite, then $f$ is \'etale if and only if it is a topological covering of the underlying analytic spaces and in this situation, $f$ is uniquely determined by the finite index subgroup $f_\ast \pi_1(X)\subset \pi_1(Y)$.
Conversely, any finite index subgroup of $\pi_1(Y)$ corresponds uniquely to a finite \'etale covering of $Y$ as above. 

A finite  morphism $f:X\to Y$ between normal varieties is quasi-\'etale, if it is \'etale in codimension one, i.e.\ there is a big open subset $U\subset X$, such that $f|_U:U\to Y$ is \'etale, see \cite[Definition 3.3]{GKP16}.
In this situation, $f$ is automatically \'etale over the smooth locus of $Y$, see Corollary \ref{cor:quasi-etale=etale} below.

\subsection{Galois covers}
A finite morphism $f:X\to Y$ between quasi-projective varieties is Galois (or a Galois cover) if there is a finite subgroup $G\subset \Aut(X)$ such that $Y\cong X/G$ and $f$ is isomorphic to the quotient map $X\to X/G$.
The group $G$ is called the Galois group of $f$.
We will need the following consequence of an equivariant version of Zariski's main theorem, see \cite[Theorem 3.8]{GKP16}.

\begin{theorem} \label{thm:Zariski}
Let $f:U\to V$ be a finite morphism between normal quasi-projective varieties.
Let $Y$ be a normal projective closure of $V$.
Then there is a normal projective closure $X$ of $U$, which is unique up to unique isomorphism, such that $f$ extends to a finite morphism $\overline f:X \to Y$.
Moreover, if $f$ is Galois with group $G$, then $\overline f$ is also Galois with group $G$.
\end{theorem}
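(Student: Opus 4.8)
The plan is to take for $X$ the normalization of $Y$ in the function field $\C(U)$; equivalently, one may invoke the equivariant Zariski-type extension result \cite[Theorem 3.8]{GKP16} directly, but I find the construction via normalization the most transparent. Since $f\colon U\to V$ is finite and $U,V$ are varieties, $\C(U)$ is a finite field extension of $\C(V)=\C(Y)$, so the normalization $\pi\colon X\to Y$ of $Y$ in $\C(U)$ is a finite morphism from a normal variety $X$; moreover $X$ is projective, being finite over the projective variety $Y$. To see that $U$ sits inside $X$ as a dense open subset with $\pi|_U=f$, note that normalization commutes with restriction to the open subset $V\subset Y$, so $\pi^{-1}(V)$ is the normalization of $V$ in $\C(U)$; on the other hand $U$ is itself normal and finite over $V$ with function field $\C(U)$, hence is also the normalization of $V$ in $\C(U)$. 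By uniqueness of normalization this yields a canonical isomorphism $U\cong\pi^{-1}(V)$ over $V$, realizing $U$ as a dense open subvariety of $X$ on which $\pi$ restricts to $f$; we then set $\overline f:=\pi$.

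For the uniqueness clause, suppose $(X',\overline f')$ is another normal projective closure of $U$ equipped with a finite extension $\overline f'\colon X'\to Y$ of $f$. Then $X'$ is a normal variety, finite over $Y$, with function field $\C(U)$ (since $U$ is dense in $X'$), so the universal property of the normalization $X$ gives a unique $Y$-morphism $X'\to X$, which is finite and birational between normal varieties, hence an isomorphism. Restricting it over $V$ produces an automorphism of $U$ over $V$ that is the identity on the generic point, hence the identity; thus the isomorphism $X'\cong X$ is compatible with the two embeddings of $U$, and any two such isomorphisms agree on the dense subset $U$, hence coincide. This gives uniqueness up to unique isomorphism.

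For the Galois assertion, let $f$ be Galois with group $G\subset\Aut(U)$, so $V\cong U/G$. Each $g\in G$ is an automorphism of $U$ over $V$; composing the embedding $U\hookrightarrow X$ with $g$ and invoking the universal property of $X$ exactly as in the uniqueness argument, $g$ extends uniquely to an automorphism $\overline g$ of $X$ over $Y$. By uniqueness of these extensions $g\mapsto\overline g$ is a homomorphism, injective because $\overline g|_U=g$, so $G$ acts on $X$ over $Y$, extending its action on $U$. The morphism $\pi\colon X\to Y$ is $G$-invariant, hence factors as $X\to X/G\to Y$, where $X\to X/G$ is the quotient (a finite morphism of normal projective varieties) and $X/G\to Y$ is finite. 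Since $f$ is Galois with group $G$, the extension $\C(U)/\C(V)$ is Galois with group $G$, so the function field of $X/G$ is $\C(X)^G=\C(U)^G=\C(V)=\C(Y)$; thus $X/G\to Y$ is finite and birational between normal varieties, hence an isomorphism. Under this identification $\overline f$ becomes the quotient map $X\to X/G$, so $\overline f$ is Galois with group $G$.

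The statement is essentially routine once the standard facts are available (finiteness of normalization in a finite field extension, projectivity of a variety finite over a projective one, denseness of $U$ in $X$, normality and quasi-projectivity of a finite group quotient of a normal quasi-projective variety). The one point requiring genuine care is the descent step in the Galois case — concluding that the finite morphism $X/G\to Y$ is an \emph{isomorphism} and not merely birational — which relies on the normality of $Y$ together with the fact that $X$ is its own normalization; the same subtlety underlies the canonical identification of $U$ with $\pi^{-1}(V)$ via uniqueness of normalization.
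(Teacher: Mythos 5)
The paper itself contains no proof of this statement: it is quoted directly from the literature, as a consequence of an equivariant version of Zariski's main theorem, with the reference \cite[Theorem 3.8]{GKP16}, so there is no internal argument to compare yours against. Your normalization construction is precisely the standard proof of that cited result: take $X$ to be the normalization of $Y$ in $\C(U)$, identify $\pi^{-1}(V)$ with $U$ because $U$ is itself the normalization of $V$ in $\C(U)$, get uniqueness from the universal property of normalization together with the fact that a finite birational morphism onto a normal variety is an isomorphism, and obtain the Galois assertion by extending each $g\in G$ to $X$ via the same universal property and recognizing $X/G\to Y$ as finite and birational onto the normal variety $Y$. All of these steps are correct. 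One small caveat: your opening claim that $\C(U)$ is a finite field extension of $\C(V)$ implicitly assumes $f$ is dominant; a finite morphism of varieties need not be (e.g.\ a closed immersion), in which case ``the normalization of $Y$ in $\C(U)$'' is not defined. This is harmless for the paper, since every application (Corollaries \ref{cor:quasi-etale=etale} and \ref{cor:Zariski}, and the covers in the proof of Theorem \ref{thm:quotient}) concerns finite \emph{surjective} morphisms, and in \cite{GKP16} a cover is by definition finite and surjective; in the non-dominant case one would instead normalize the closure of the image (or use the relative normalization of $Y$ in $U$). With the dominant reading understood, your proof is complete and is essentially the same argument as the one behind the cited theorem.
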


The above theorem implies for instance the following two well-known statements.

\begin{corollary} \label{cor:quasi-etale=etale}
Let $f:X\to Y$ be a finite quasi-\'etale morphism between normal quasi-projective varieties.
If $Y$ is smooth, then $f$ is \'etale.
\end{corollary}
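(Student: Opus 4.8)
The plan is to deduce this from the purity of the branch locus (Zariski--Nagata) together with the extension statement of Theorem \ref{thm:Zariski}, which lets us reduce to the case of a Galois cover. First I would treat the Galois case directly, and then I would reduce the general case to it. So suppose first that $f:X\to Y$ is a finite quasi-\'etale morphism with $Y$ smooth, and assume in addition that $f$ is Galois with group $G\subset \Aut(X)$. By hypothesis there is a big open subset $V\subset X$ with $f|_V$ \'etale; set $U:=f(V)$, which we may shrink so that it is the maximal open subset of $Y$ over which $f$ is \'etale, and note $Y\setminus U$ has codimension $\geq 2$ since $X$ is normal and $f$ is finite. The branch divisor of $f$ is thus empty, so by the Zariski--Nagata purity of the branch locus (valid because $Y$ is smooth, hence regular), $f$ is \'etale everywhere, i.e.\ the ramification locus is empty.

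For the general (non-Galois) case I would pass to the Galois closure. Over the \'etale locus $U\subset Y$, the restriction $f|_{f^{-1}(U)}:f^{-1}(U)\to U$ is a finite \'etale cover, and I can take its Galois closure $W\to U$, which is finite \'etale and Galois, say with group $G$, and factors as $W\to f^{-1}(U)\to U$. By Theorem \ref{thm:Zariski}, applied with $Y$ (smooth, hence a normal projective closure of $U$ after possibly first replacing $Y$ by a projective closure and noting the statement is local on $Y$ so we may assume $Y$ projective), the finite morphism $W\to U$ extends uniquely to a finite morphism $\overline W\to Y$ from a normal variety $\overline W$, and this extension is again Galois with group $G$. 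The composite $\overline W\to Y$ is quasi-\'etale (it is \'etale over $U$, whose complement has codimension $\geq 2$), so by the Galois case already treated, $\overline W\to Y$ is \'etale. Since $X\to Y$ is, over $U$, an intermediate cover $W\to f^{-1}(U)\to U$ corresponding to a subgroup $H\subset G$, and $\overline W\to Y$ is \'etale with group $G$, the intermediate quotient $\overline W/H\to Y$ is \'etale as well; but $\overline W/H$ is a normal projective closure of $f^{-1}(U)$ receiving the finite morphism from $\overline W$, so by the uniqueness in Theorem \ref{thm:Zariski} it coincides with $X$, and hence $f:X\to Y$ is \'etale.

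The main obstacle, and the only place where real input is needed, is the purity of the branch locus step: one must know that a finite dominant morphism to a regular (here smooth) variety which is \'etale outside a codimension-$\geq 2$ set is in fact everywhere \'etale. Everything else is bookkeeping with Galois closures and the canonical extension of finite covers across big open subsets provided by Theorem \ref{thm:Zariski}. One minor technical point to be careful about is that quasi-\'etale as defined in the excerpt only requires \'etaleness over a big open subset of the \emph{source} $X$; I would want to observe at the outset that, since $f$ is finite and $X$ is normal, the image of a big open subset of $X$ contains a big open subset of $Y$, and conversely, so the two possible formulations agree and the purity argument applies. Alternatively, and perhaps most cleanly for a write-up, one can invoke \cite[Corollary 3.11]{GKP16} or the equivalent statement there, which is exactly this corollary; but the self-contained argument above via Theorem \ref{thm:Zariski} and Zariski--Nagata purity is the natural one given what has been set up in the paper.
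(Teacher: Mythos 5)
Your proof is correct, but it runs along a genuinely different track from the paper's. You rest everything on Zariski--Nagata purity of the branch locus: since $f$ is finite with $X$ normal and $Y$ regular, the non-\'etale locus in $Y$, if nonempty, would be pure of codimension one, and quasi-\'etaleness (after the observation you rightly make, that the image of the codimension-$\geq 2$ non-\'etale locus of the source has codimension $\geq 2$ in $Y$) forces it to be empty. The paper instead uses the topological avatar of purity: for a big open subset $V$ of the smooth variety $Y$ one has $\pi_1(V)\cong\pi_1(Y)$, so the finite \'etale cover $f^{-1}(V)\to V$ extends to a finite \'etale cover of $Y$, which is then identified with $X\to Y$ via the uniqueness statement in Theorem \ref{thm:Zariski}. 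Both are legitimate; yours imports the algebraic purity theorem as a black box and is shorter in spirit, while the paper stays within the toolkit it has already set up (fundamental groups of smooth varieties and Theorem \ref{thm:Zariski}). One structural remark: your reduction to the Galois case, and hence your entire second paragraph with the Galois closure, extension to a projective closure, and intermediate quotient $\overline W/H$, is unnecessary -- Zariski--Nagata purity nowhere uses a Galois hypothesis, so your first paragraph already proves the corollary verbatim for arbitrary finite quasi-\'etale $f$ with smooth target (only note that ``unramified everywhere'' plus the purity statement in its standard form already yields \'etale, since the usual formulation asserts purity of the non-\'etale locus). Trimming that detour would make the argument cleaner without losing anything.
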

\begin{proof}
By assumption, there is a big open subset $V\subset Y$ such that $U:=f^{-1}(V)\to V$ is a finite \'etale morphism.
Since $Y$ is smooth, $V$ is smooth and so $U$ must be smooth as well.
But finite \'etale morphisms $U\to V$ between smooth quasi-projective varieties are in one to one correspondence to finite index subgroups of $\pi_1(V)$.
Since $Y$ is smooth and $V\subset Y$ is big, $\pi_1(Y)\cong\pi_1(V)$ and so the finite \'etale cover $U\to V$ extends uniquely to a finite \'etale cover $\overline U\to Y$.
By Theorem \ref{thm:Zariski}, $\overline U\to Y$ and $X\to Y$ extend to finite covers of a normal projective closure of $Y$, which by construction coincide over $V\subset Y$.
The uniqueness assertion in Theorem \ref{thm:Zariski} thus implies that these extensions are isomorphic, and so $\overline U\to Y$ and $X\to Y$ must be isomorphic.
Hence, $X\to Y$ is \'etale, as we want. 
\end{proof}

\begin{corollary} \label{cor:Zariski}
Let $f:X\to Y$ and $g:Y\to Z$ be finite morphisms of normal quasi-projective varieties.
If $g\circ f:X\to Z$ is Galois, then so is $f$.
\end{corollary}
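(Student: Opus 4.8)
\textbf{Proof proposal for Corollary \ref{cor:Zariski}.}
The plan is to exploit the uniqueness part of Theorem \ref{thm:Zariski} together with the elementary fact that the Galois property is insensitive to restricting to a big open subset. First I would reduce to the case of étale morphisms. Since $g\circ f$ is Galois with some group $H\subset \Aut(X)$, we have $Z\cong X/H$; let $K:=f_\ast \pi_1$ of a suitable big open, but more robustly, set $N\subset H$ to be the subgroup with $Y\cong X/N$ (which exists once we know $f$ is Galois — so this is circular and must be avoided). Instead, the clean approach is: pick a big open subset $Z^\circ\subset Z$ over which $g\circ f$ is étale (such exists since a finite Galois cover is étale over the smooth locus, by Corollary \ref{cor:quasi-etale=etale} applied after removing the branch locus — more precisely, remove from $Z$ the images of the singular loci and the branch locus of $g\circ f$, $g$, $f$). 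Let $Y^\circ:=g^{-1}(Z^\circ)$ and $X^\circ:=(g\circ f)^{-1}(Z^\circ)$; then $X^\circ\to Z^\circ$ is finite étale and Galois, $X^\circ\to Y^\circ$ and $Y^\circ\to Z^\circ$ are finite étale, and all three varieties may be taken smooth.

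Next I would work purely with fundamental groups. A finite étale Galois cover $X^\circ\to Z^\circ$ corresponds to a \emph{normal} finite-index subbroup $P:=\pi_1(X^\circ)\trianglelefteq \pi_1(Z^\circ)$, while $Y^\circ\to Z^\circ$ corresponds to an intermediate finite-index subgroup $P\subset Q:=\pi_1(Y^\circ)\subset \pi_1(Z^\circ)$. Since $P$ is normal in $\pi_1(Z^\circ)$, it is a fortiori normal in $Q$; hence the étale cover $X^\circ\to Y^\circ$ corresponding to $P\subset Q$ is Galois, with Galois group $Q/P$. Thus $f^\circ:X^\circ\to Y^\circ$ is a finite étale Galois cover; let $G_0:=Q/P\subset \Aut(X^\circ)$ denote its Galois group, so $Y^\circ\cong X^\circ/G_0$.

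Finally I would globalize using Theorem \ref{thm:Zariski}. The finite Galois cover $X^\circ\to Y^\circ$ with group $G_0$ extends, by Theorem \ref{thm:Zariski} applied with $V=Y^\circ$, $Y=Y$, to a finite Galois cover $\overline{X^\circ}\to Y$ with the same group $G_0$, where $\overline{X^\circ}$ is the unique normal projective closure of $X^\circ$ over $Y$. On the other hand, $f:X\to Y$ is itself a finite morphism from a normal variety restricting to $X^\circ\to Y^\circ$ over $Y^\circ$, so $X$ is \emph{a} normal closure of $X^\circ$ over $Y$; by the uniqueness clause in Theorem \ref{thm:Zariski} we get $X\cong \overline{X^\circ}$ over $Y$, and the $G_0$-action on $\overline{X^\circ}$ transports to a $G_0$-action on $X$ realizing $Y\cong X/G_0$. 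Hence $f$ is Galois. The main obstacle — really the only subtle point — is arranging a big open $Z^\circ$ over which \emph{all three} maps are simultaneously étale and the preimages $Y^\circ, X^\circ$ are big (so that $\pi_1$ is unchanged and Theorem \ref{thm:Zariski}'s uniqueness applies); this requires checking that the various branch and singular loci push forward to codimension-$\geq 2$ subsets under the finite maps involved, which is standard but needs to be spelled out carefully.
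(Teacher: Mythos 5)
Your overall strategy --- restrict to an open locus where everything is \'etale, argue with fundamental groups that the subgroup corresponding to $X^\circ\to Z^\circ$ is normal in $\pi_1(Z^\circ)$ and hence a fortiori normal in the intermediate subgroup corresponding to $Y^\circ\to Z^\circ$, then globalize via Theorem \ref{thm:Zariski} --- is exactly the route the paper takes. However, your reduction step contains a genuine error: you insist on a \emph{big} open subset $Z^\circ\subset Z$ over which $g\circ f$, $g$ and $f$ are \'etale, and you assert (deferring the check) that the relevant branch and singular loci map into codimension~$\geq 2$ subsets. That is false in general: the branch locus of a finite cover of normal varieties is typically a divisor (already for a Galois double cover of curves branched at a point there is no big open subset of the base over which the cover is \'etale), so the big $Z^\circ$ you ask for need not exist, and the ``standard check'' you postpone cannot be carried out. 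Note also that Corollary \ref{cor:quasi-etale=etale} applies to \emph{quasi-\'etale} morphisms only, so it does not produce such a locus for an arbitrary finite Galois cover.

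Fortunately, bigness is never needed, and dropping it turns your argument into the paper's proof. The fundamental group step only requires some dense smooth open $Z^\circ$ over which the three maps are finite \'etale (discard the branch loci and singular loci, a proper closed subset); you never need $\pi_1(Y^\circ)\cong\pi_1(Y)$, since all comparisons happen among subgroups of $\pi_1(Z^\circ)$. For the globalization there is a second, minor, imprecision: Theorem \ref{thm:Zariski} as stated extends a cover of $V=Y^\circ$ to a normal \emph{projective} closure, whereas your $Y$ is only quasi-projective; the correct phrasing is to extend $X^\circ\to Y^\circ$ to a Galois cover $\bar X\to \bar Y$ of a projective closure $\bar Y$ of $Y$, restrict over $Y\subset\bar Y$, and identify this restriction with $X$ using the uniqueness clause together with the fact that $\bar f^{-1}(Y)=X$ for any finite extension $\bar f$ of $f$ to projective closures (which uses that $f$ is finite, and surjective because $g\circ f$ is). With these two repairs your argument is correct and coincides with the paper's.
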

\begin{proof}
Since $f$ and $g$ are finite and $g\circ f$ is surjective, $f$ is finite and surjective.
Hence, $\bar{f}^{-1}(Y)=X$ for any extension $\bar f:\bar X\to \bar Y$ of $f$ to a finite morphism between projective closures of $X$ and $Y$.
By Theorem \ref{thm:Zariski}, $f$ is thus Galois if and only if its base change to a non-empty Zariski open subset $V\subset Y$ is Galois.
Hence, up to replacing $X,Y$ and $Z$ by suitable dense open subsets, we may assume that $X,Y$  and $Z$ are smooth and $f$ and $g$ are finite \'etale.
In this situation, consider the injective morphisms on fundamental groups
$$
\pi_1(X)\stackrel{f_\ast}\longrightarrow \pi_1(Y)\stackrel{g_\ast}\longrightarrow \pi_1(Z) .
$$
Since $g\circ f$ is a finite Galois \'etale cover,
$\im  (g_\ast \circ f_\ast)\subset \pi_1(Z)$ is a normal subgroup.
This implies that $\im  (g_\ast \circ f_\ast)\subset \im(g_\ast)$  is a normal subgroup of $\im(g_\ast)$.
Since $g_\ast$ is injective, we conclude that $\im(f_\ast)\subset  \pi_1(Y)$ is a normal subgroup, which is equivalent to saying that the finite \'etale morphism $f$ is Galois.
This proves the corollary.
\end{proof}

\subsection{Elliptic fibre spaces with $c_1^{n-2}c_2=0$} \label{subsec:Grassi} 



Let $X$ be an $n$-dimensional minimal model of Kodaira dimension $n-1$ and with Iitaka fibration $f:X\to S$, which is a morphism by Theorem \ref{thm:good-minmod-kappa=n-1}. 
Let $A$ be a very ample divisor on $S$ such that $K_X$ is linearly equivalent to a rational multiple of $f^\ast A$ and let $C\subset S$ be the intersection of $n-1$ general elements of the linear series $|A|$.
Since $S$ is normal and $X$ is terminal, it follows from Bertini's theorem that $Z:=f^{-1}(C)$ and $C$ are smooth.
We then have the following simple and well-known lemma.

\begin{lemma} \label{lem:grassi}
In the above notation, the following are equivalent:
\begin{enumerate}
\item $c_1^{n-2}c_2(X)=0$; \label{item:c1c2X=0}
\item $c_2(Z)=0$; \label{item:c2Z=0}
\item $Z\to C$ is a minimal elliptic surface such that all singular fibres are multiples of smooth elliptic curves;\label{item:Z=ell-surf1}
\item $Z\to C$ is a minimal elliptic surface, whose smooth fibres have constant $j$-invariants and whose singular fibres are multiples of smooth elliptic curves.\label{item:Z=ell-surf2}
\end{enumerate}   
\end{lemma}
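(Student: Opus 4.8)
The plan is to establish the cyclic chain of implications $(\ref{item:c1c2X=0})\Leftrightarrow(\ref{item:c2Z=0})$ first, and then $(\ref{item:c2Z=0})\Leftrightarrow(\ref{item:Z=ell-surf1})\Leftrightarrow(\ref{item:Z=ell-surf2})$, working entirely on the smooth elliptic surface $g\colon Z\to C$. For the equivalence $(\ref{item:c1c2X=0})\Leftrightarrow(\ref{item:c2Z=0})$, the idea is that cutting with $n-1$ general hyperplanes from $|A|$ is the same operation on both sides. More precisely, since $K_X\sim_\Q f^\ast A'$ for some $\Q$-divisor $A'$ proportional to $A$, and $C=H_1\cap\dots\cap H_{n-1}$ with $H_i\in|A|$ general, the restriction formula for Chern classes of the tangent sheaf under the inclusion of a general complete intersection (valid because $X$ is terminal, hence smooth in codimension two, so $Z$ avoids $\Sing X$ and $Z$ is smooth by Bertini) gives $c_2(Z)=c_2(X)|_Z$ up to a correction term coming from the normal bundle $N_{Z/X}=\bigoplus_i f^\ast\OO_S(H_i)|_Z$. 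Intersecting that identity against $1$ (i.e.\ taking degrees on the surface $Z$) and using that $c_1(X)|_Z$ is a rational multiple of $g^\ast(A|_C)$, which is a multiple of a fibre class, one checks that all the correction terms are proportional to $c_1^{n-2}c_2(X)$ as well, so that $c_1^{n-2}c_2(X)$ is a fixed nonzero rational multiple of $\deg c_2(Z)=c_2(Z)$. I expect this bookkeeping to be the only genuinely computational part; the cleanest way to organize it is to note that $c_1^{n-2}c_2(X)=(-K_X)^{n-2}\cdot c_2(X)=(\text{const})\cdot (f^\ast A)^{n-1}\cdot c_2(X)=(\text{const})\cdot c_2(X)|_{f^{-1}(C)}$, and then that $c_2(X)|_Z=c_2(Z)$ because the relevant normal bundle contributions on $Z$ are pulled back from $C$ and hence have square zero and multiply to zero against $c_1$ of a fibre.

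For $(\ref{item:c2Z=0})\Leftrightarrow(\ref{item:Z=ell-surf1})$, I would invoke the classical structure theory of relatively minimal elliptic surfaces: $Z\to C$ is relatively minimal (no $(-1)$-curves in fibres) because $K_X$, hence $K_Z=K_X|_Z+(\text{fibre contributions})$, is nef; by Kodaira's canonical bundle formula and the topological Euler number formula $e(Z)=\sum_{\text{sing.\ fibres}} e(F_t)$, where each singular fibre contributes a nonnegative integer that vanishes precisely for fibres of type $mI_0$ (i.e.\ multiples of smooth elliptic curves). Hence $c_2(Z)=e(Z)=0$ is equivalent to saying every singular fibre is of type $mI_0$. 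The implication $(\ref{item:Z=ell-surf2})\Rightarrow(\ref{item:Z=ell-surf1})$ is trivial, so it remains to show $(\ref{item:Z=ell-surf1})\Rightarrow(\ref{item:Z=ell-surf2})$, i.e.\ that once all singular fibres are multiples of smooth elliptic curves, the smooth fibres automatically have constant $j$-invariant. For this I would use the $j$-map $j\colon C\to \PP^1$: away from the finitely many points lying under singular fibres the family is smooth, and over a neighbourhood of a point of type $mI_0$ the monodromy is finite (it lies in a finite cyclic group, as the fibre is smooth after a base change of degree $m$), so $j$ extends holomorphically across these points to a morphism $C\to\PP^1$ whose image avoids $\infty$ (as there are no fibres of type $I_k$, $k\geq 1$, nor of additive type with $j=\infty$); since $C$ is a smooth projective curve, a non-constant such $j$ would be a finite surjection onto $\PP^1$, contradicting that $\infty$ is not in the image. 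Therefore $j$ is constant.

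The step I expect to be the main obstacle is the first equivalence, specifically making the reduction $c_1^{n-2}c_2(X)=(\text{const})\cdot c_2(Z)$ fully rigorous in the singular setting: one must be careful that $c_2(X)$ is only defined as the closure of $c_2(T_{X^{\sm}})$, that the general complete intersection $Z$ meets the codimension-two locus $\Sing X$ in the empty set (which needs $\codim\Sing X\geq 3$, true since $X$ is terminal, plus a dimension count for general $H_i$), and that the adjunction-type comparison $c_2(X)|_Z=c_2(Z)$ is legitimate. I would handle this by passing to the smooth locus, where everything is the usual Whitney formula, and observing that $Z$ and all the intermediate cuts $f^{-1}(H_1\cap\dots\cap H_k)$ are contained in $X^{\sm}$ for general choices, so no subtlety about the cycle-theoretic closure arises. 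The remaining ingredients — Bertini for the smoothness of $Z$ and $C$ (already recorded in the text preceding the lemma), Kodaira's classification of singular fibres, the Euler number additivity, and finiteness of local monodromy around $mI_0$ fibres — are all standard and can be cited from \cite{BHPV}.
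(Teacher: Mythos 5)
Your proposal is correct and follows essentially the same route as the paper: restrict to the general complete-intersection surface $Z$ and use the Whitney formula, with the normal-bundle correction terms killed because they are multiples of classes pulled back from $C$ (resp.\ of $f^\ast A$), then Euler-number additivity plus Kodaira's classification for (2)$\Leftrightarrow$(3), and constancy of the $j$-map because it is a morphism from the projective curve missing $\infty$ for (3)$\Rightarrow$(4) (the paper does this last step via a ramified base change killing the multiple fibres, which is only a cosmetic difference from your direct extension argument). The only slip is bookkeeping, inherited from the paper's own setup: $C$ should be the intersection of $n-2$ (not $n-1$) general members of $|A|$ so that $Z=f^{-1}(C)$ is a surface, and correspondingly $c_1^{n-2}c_2(X)=(\mathrm{const})\cdot (f^\ast A)^{n-2}\cdot c_2(X)=(\mathrm{const})\cdot c_2(Z)$ with a nonzero constant.
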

\begin{proof}
%
Note first that the normal bundle   $\mathcal{N}_{Z/X}=f^*A^{\oplus n-2}|_Z$ is a direct sum of nef line bundles on $Z$. 
Since $K_X$ is nef, it follows that  $K_Z=K_X|_{Z}\otimes \overset{n-2}{\wedge}\mathcal{N}_{Z/X}$  is nef as well.
Hence, $f|_Z:Z\to C$ is a minimal elliptic surface of Kodaira dimension one.

Since $Z$ is smooth and contained in the smooth locus of $X$, we have a short exact sequence of vector bundles on $Z$:
$$
0\to T_Z\to T_X|_Z\to f^\ast A^{\oplus n-2}|_{Z}\to 0 .
$$ 
Applying the Whitney sum formula, we deduce that the second Chern number of $Z$ is given by
\begin{align*}
c_2(Z)&=c_2(X)\cdot f^\ast A^{n-2}-c_1(Z)\binom{n-2}{1}f^\ast A|_Z - \binom {n-2}{2}f^\ast A^2|_Z\\
&=c_2(X)\cdot f^\ast A^{n-2}-(n-2)c_1(Z) f^\ast A|_Z ,
\end{align*}
where  we used $f^\ast A^2|_Z=f^\ast A^n=0$.
By adjunction, $c_1(Z)=(c_1(X)- (n-2)f^\ast A)|_Z$, and so  
 $
c_1(Z) f^\ast A|_Z=0
$,
as it is a multiple of $f^\ast A^{n}=0$.
The above formula thus yields
$$
c_2(Z)=c_2(X)\cdot f^\ast A^{n-2} .
$$
This proves the equivalence of (\ref{item:c1c2X=0}) and (\ref{item:c2Z=0}), because $K_X$ is linearly equivalent to a nonzero rational multiple of $f^\ast A$.

To prove (\ref{item:c2Z=0}) $\Leftrightarrow $ (\ref{item:Z=ell-surf1}), note that  by \cite[Lemma VI.4]{Bea96}, 
 $c_2(Z)$  coincides with the sum of the Euler numbers of the singular fibres of $Z\to C$.
By Kodaira's classification of singular fibres it thus follows that $c_2(Z)=0$ if and only if all singular fibres of $Z\to C$ are multiples of a smooth elliptic curve, cf.\  \cite[Section V.7]{BHPV} for Kodaira's table of singular fibres of minimal elliptic fibrations.

Finally, (\ref{item:Z=ell-surf2}) $\Rightarrow $ (\ref{item:Z=ell-surf1}) is clear. 
For the converse, by $(\ref{item:Z=ell-surf1})$, the only singular fibres of the elliptic fibration $f|_Z: Z\to C$ are multiples of smooth elliptic curves. 
Then there is a ramified Galois cover $q: C'\to C$, a smooth projective surface $Z'$ and the following commutative diagram
$$
\xymatrix{
Z' \ar[d]_g \ar[r]  & Z\ar[d]^-{f|_Z}\\
C' \ar[r]^q & C,
}
$$ where $g$ is smooth. 
Also, the above diagram is Cartesian over a dense open set of $C$, see e.g., \cite[Lemma VI. 7, 7']{Bea96}. 
Notice that since $g$ is smooth, the $j$-invariant map $j: C'\to\CP^1$ is constant, because poles of $j$ correspond to singular fibres. 
Hence the j-invariant $j:C\dashrightarrow \CP^1$ is constant. 
This proves the lemma. 
\end{proof} 

The above lemma has the following immediate consequence, cf.\ \cite[Theorem 2.7]{gra94}. 

\begin{corollary} \label{cor:grassi}
Let $X$ be an $n$-dimensional minimal model of Kodaira dimension $n-1$ and with Iitaka fibration $f:X\to S$.
Then, $c_1^{n-2}c_2(X)=0$ if and only if $f$ is generically isotrivial and for any codimension one point $s\in S^{(1)}$, the fibre of $f$ above $s$ is either smooth or a multiple of a smooth elliptic curve.
\end{corollary}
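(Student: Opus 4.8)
The plan is to deduce the corollary directly from Lemma \ref{lem:grassi} by relating the global condition ``$f$ is generically isotrivial and has only multiple-of-smooth fibres in codimension one'' to the surface-level condition (\ref{item:Z=ell-surf2}) of that lemma, where $Z=f^{-1}(C)$ for $C\subset S$ a general complete intersection of $n-1$ members of $|A|$. The key observation is that cutting by a general $C$ does not lose any information about the fibres of $f$ over codimension-one points of $S$: since $C$ is a general complete intersection curve, it meets the divisorial part of the discriminant locus of $f$ in a finite set of general points, and it misses the higher-codimension part entirely. Hence the singular fibres of $Z\to C$ are precisely the restrictions of the fibres of $f$ over a general collection of codimension-one points $s\in S^{(1)}$ lying on the discriminant.

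First I would record the elementary fact that for a general point $s$ of a divisorial component of the discriminant, the fibre $X_s$ is a multiple of a smooth elliptic curve if and only if the fibre of $Z\to C$ over the corresponding point of $C$ is; this follows because $Z\to C$ is, up to shrinking, the base change of $f$ along $C\hookrightarrow S$, and the generic behaviour of $f$ along a divisorial component of the discriminant is constant (by generic smoothness of the components of the discriminant and the fact that Kodaira's fibre type is locally constant in families of minimal elliptic surfaces over the smooth locus of the base of the family). Similarly, the $j$-invariant of $f$, viewed as a rational map $S\dashrightarrow \PP^1$, is constant if and only if its restriction to the general curve $C$ is constant, which is exactly constancy of the $j$-invariant of $Z\to C$; this uses that a rational function on $S$ that is nonconstant restricts nonconstantly to a general complete intersection curve.

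With these two reductions in place, the corollary is immediate: by Lemma \ref{lem:grassi}, $c_1^{n-2}c_2(X)=0$ is equivalent to condition (\ref{item:Z=ell-surf2}), namely that $Z\to C$ has constant $j$-invariant and all its singular fibres are multiples of smooth elliptic curves; by the previous paragraph this translates to saying that $f$ has constant $j$-invariant along a general curve $C$ — equivalently $f$ is generically isotrivial — and that for a general codimension-one point $s$ the fibre $X_s$ is smooth or a multiple of a smooth elliptic curve; and since the fibre type over codimension-one points is generically constant along each divisorial component of the discriminant, the condition for a general such $s$ is the condition for all $s\in S^{(1)}$.

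The main obstacle I anticipate is making rigorous the claim that ``the fibre type of $f$ over a codimension-one point of $S$ is determined by, and consistent with, what a general hyperplane section sees.'' Concretely, one must verify that for a divisorial component $D$ of the discriminant, the isomorphism class of the generic fibre of $f$ over the generic point of $D$ — in particular whether it is a multiple fibre and whether the underlying reduced curve is smooth — is faithfully detected after restricting to a general curve $C$ meeting $D$ transversally at a general point. This is where one invokes Bertini (already used in the setup of Lemma \ref{lem:grassi} to ensure $Z$ and $C$ are smooth) together with the compatibility of the relative minimal model / Kodaira fibre classification with base change by a general curve through a general point of $D$, as in \cite[Lemma VI.7, 7']{Bea96}. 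Once this local-global compatibility is granted, no further computation is needed.
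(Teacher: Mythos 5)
Your proposal is correct and follows essentially the same route as the paper: the paper's own proof also cuts by a general complete intersection curve $C$, applies Lemma \ref{lem:grassi} to $Z=f^{-1}(C)\to C$, and translates the surface-level condition into the condition on $f$ over codimension-one points of $S$. The local--global compatibility you flag (general points of $C\cap D$ detect the fibre of $f$ over the generic point of each divisorial component $D$ of the discriminant, and the $j$-invariant is constant on $S$ iff it is constant on a general $C$) is exactly what the paper implicitly uses, stated there even more tersely.
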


\begin{proof}
Let  $C\subset S$ be a general complete intersection curve with preimage $Z:=f^{-1}(C)$.
Since $X$ is minimal and $f:X\to S$ is the Iitaka fibration, $Z\to C$ is a smooth minimal elliptic surface.
By Lemma \ref{lem:grassi}, $c_1^{n-2}c_2(X)=0$ if and only if $Z\to C$ is generically isotrivial with only multiple singular fibres.
The latter is equivalent to asking that $f$ is generically isotrivial and for any codimension one point $s\in S^{(1)}$, the fibre of $f$ above $s$ is either smooth or a multiple of a smooth elliptic curve. 
\end{proof}


\section{Reparametrizing quotients of products with elliptic curves}

In this section we analyse quotients $(T\times E)/G$, where $T$ is a smooth  projective variety, $E$ is an elliptic curve and $G$ is a finite group that acts faithfully on $T\times E$.
We will additionally assume that $G$ acts on $T$ such that the projection $T\times E\to T$ is $G$-equivariant.
The simplest such actions are diagonal, i.e.\ $G\subset \Aut(T)\times \Aut(E)$ acts separately  on $T$ and $E$, respectively.
Diagonal actions have the nice property that they are automatically free in codimension one, as long as they are faithful on each factor.
However, not every action as above needs to be diagonal.
An easy counterexample is given by the action of $\Z/2$ on the self-product $E\times E$ of an elliptic curve $E$, generated by $(x,y)\mapsto (-x,x+y)$. 
This action is not diagonal, nor free in codimension one, and it is easy to construct many more examples along these lines (also for $T$ of general type), see Section \ref{subsec:example} below.

The following theorem shows however that as long as we are only interested in the quotient $(T\times E)/G$,
 we can always replace $T$, $E$ and $G$ without changing the quotient so that the action of $G$ on $T\times E$ is diagonal and free in codimension one.

\begin{theorem} \label{thm:quotient}
Let $E$ be an elliptic curve, $T$ a smooth projective variety, and let $G$ be a finite group which acts on $T\times E$ and $T$ such that the projection $T\times E\to T$ is $G$-equivariant. 

Then there is an elliptic curve $E'$, a normal projective variety $T'$ and a finite group $G'$ which acts on $T'\times E'$ and $T'$, such that the projection $T'\times E'\to T'$ is $G'$-equivariant, with the following properties:

\begin{enumerate} 
\item There are compatible isomorphisms  \label{item:thm:quotient:1}
$$
(T'\times E')/G'\cong (T\times E)/G \ \ \text{and}\ \ 
 T'/G'\cong T/G .
$$   
\item The action of $G'$ on $T'\times E'$ is diagonal, i.e.\ $G'\subset \Aut(T')\times \Aut(E')$, and faithful on each factor of $T'\times E'$.\label{item:thm:quotient:2}
\item  There is a normal subgroup $G_0'\subset G'$ whose index is $\leq 4$ or $6$, such that the action of $G'_0$ on $T'\times E'$ is free.\label{item:thm:quotient:3}
\end{enumerate}  
\end{theorem}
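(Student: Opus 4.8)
\emph{Strategy.} I would first pin down the shape of an arbitrary action as in the hypothesis, then extract a cohomological obstruction to being diagonal, then kill that obstruction by passing to a $G$-equivariant \'etale cover of $T$ (enlarging $G$ by the deck group), and finally arrange faithfulness on each factor; item (\ref{item:thm:quotient:3}) will come out essentially for free. For the shape of the action: replacing $G$ by its image in $\Aut(T\times E)$ changes neither $(T\times E)/G$ nor $T/G$, so we may assume $G$ acts faithfully, hence also faithfully on $T$. Since $\Aut(E)=E\rtimes\mu$ with $\mu:=\Aut(E,0)$ cyclic of order $2$, $4$ or $6$, and $T$ is connected, every $g\in G$ acts by
$$
(t,x)\longmapsto\bigl(g\cdot t,\ \chi(g)\cdot x+\sigma_g(t)\bigr)
$$
for a homomorphism $\chi\colon G\to\mu$ and a morphism $\sigma_g\colon T\to E$; associativity turns $g\mapsto\sigma_g$ into a $1$-cocycle with values in the $G$-module $M:=\operatorname{Mor}(T,E)$, and the whole action is recovered from $(\chi,\sigma)$. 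Replacing $\sigma$ by a cohomologous cocycle $g\mapsto\sigma_g+(g\cdot\psi-\psi)$, $\psi\in M$, is the same as conjugating the action by the fibrewise translation $(t,x)\mapsto(t,x+\psi(t))$ of $T\times E$, which leaves $(T\times E)/G$ and $T/G$ unchanged. Hence only $(\chi,[\sigma])$ with $[\sigma]\in H^1(G,M)$ matters, and the action is conjugate to a diagonal one exactly when $[\sigma]$ comes from $H^1(G,E)$, where $E\subset M$ is the subgroup of constant morphisms.

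\emph{Killing the obstruction (the crux).} Taking Albanese varieties yields a $G$-equivariant exact sequence $0\to E\to M\to L\to 0$ with $L=\Hom(\Alb T,E)$ a finitely generated free $\Z$-module; the image $[\bar\sigma]\in H^1(G,L)$ of $[\sigma]$ is the obstruction to diagonality, and, $G$ being finite, it is torsion: $N[\bar\sigma]=0$ for some $N$, so after adjusting $\sigma$ we may take $N\bar\sigma_g=g\cdot\hat\beta-\hat\beta$ for a fixed $\hat\beta\in\Hom(\Alb T,E)$. To destroy $[\bar\sigma]$: using that the Albanese torsor of $T$ has torsion class in $H^1(G,\Alb T)$, choose a $G$-equivariant morphism $\alpha\colon T\to A$ to an abelian variety $A$ carrying a \emph{linear} $G$-action through which the relevant data factor, and let $T'$ be a connected component of $T\times_{A,[N]}A$ — a $G$-equivariant finite \'etale cover of $T$ whose deck group $Q\subset A$ acts by translations — together with the induced $G$-equivariant $\alpha'\colon T'\to A$ satisfying $[N]\circ\alpha'=\alpha\circ\pr$. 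Enlarging $G$ to the extension $G'$ of $G$ by $Q$ acting on $T'$, with $Q$ acting trivially on $E$, one gets compatible isomorphisms $(T'\times E)/G'\cong(T\times E)/G$ and $T'/G'\cong T/G$ (as $Q$ acts fibrewise over $T$ and trivially on $E$). On $T'\times E$ the class $[\bar\sigma]$ now vanishes, because $\hat\beta\circ\alpha'$ is an $N$-th root of the pullback of $\hat\beta\circ\alpha$; conjugating the $G'$-action by $(t',x)\mapsto(t',x-\hat\beta(\alpha'(t')))$ turns the translation part of each element into a constant, making the action diagonal — here one must check that this conjugation is compatible with the $Q$-part, which holds because $Q$ acts on $A$ by translations.

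\emph{Faithfulness, item (\ref{item:thm:quotient:3}), and the main obstacle.} The diagonal $G'$-action on $T'\times E$ is faithful on $T'$ (since $G$ is faithful on $T$ and $Q$ on $T'$); to make it faithful on $E$ as well, replace $G'$ by $G'/K$ and $T'$ by $T'/K$, where $K:=\ker(G'\to\Aut E)$ acts on $T'$ only — this keeps both quotients and the diagonality, and the residual action is faithful on $T'/K$. Taking $E':=E$ and this last $T',G'$ proves (\ref{item:thm:quotient:1}) and (\ref{item:thm:quotient:2}). Item (\ref{item:thm:quotient:3}) is then automatic: $G'\hookrightarrow\Aut(E')=E'\rtimes\mu'$, so $G_0':=\ker(G'\to\mu')=G'\cap E'$ is normal of index dividing $|\mu'|\in\{2,4,6\}$ and consists of nontrivial translations of $E'$, hence acts freely on $T'\times E'$. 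The hard part is the second step: choosing the cover $T'\to T$ and the enlargement $G\subset G'$ so that killing $[\bar\sigma]$ and the ensuing conjugation produce an action that is genuinely diagonal on $T'\times E'$, not merely ``diagonal over the base'' $T'/G'$; this is precisely where the torsion of $H^1$ of a finite group, together with the replacement of the affine Albanese action by a linear one, are used in an essential way.
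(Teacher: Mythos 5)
Your route is genuinely different from the paper's. The paper never introduces cocycles: it sets $G_0=\ker(G\to\Aut(E,0))$, uses Albanese duality to produce a $G/G_0$-equivariant morphism $h\colon (T\times E)/G_0\to F$ to an elliptic curve, takes a connected component $S'$ of a general fibre of $h$ as a multisection of $(T\times E)/G_0\to T/G_0$, passes to a Galois closure $S''\to T/G$ and identifies the induced cover with $S''\times E'$ (trivial monodromy plus a fine moduli space with level structure); diagonality is then proved geometrically by showing that $h$ pulls back to $\epsilon\circ\pr_2$ on $S''\times E'$. You instead encode the action by the pair $(\chi,\sigma)$, locate the obstruction to diagonality in $H^1(G,\Hom(\Alb T,E))$, and kill it on the cover obtained by pulling back $[N]$ on the Albanese, enlarging $G$ by the deck group $Q$; your endgame (dividing by $\ker(G'\to\Aut(E'))$ and reading off item (3) from $\Aut(E')=E'\rtimes\Aut(E',0)$) coincides with the paper's. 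Your argument is more elementary in that it avoids the Galois-closure and moduli-space inputs, at the price of twisted-coefficient bookkeeping; both proofs use the Albanese in an essential way, and your obstruction-theoretic core (torsion of $H^1$ of a finite group against torsion-freeness of $\Hom(\Alb T',E)$, made effective by the $N$-divisibility gained on the cover) is sound.

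Two steps need repair. First, the linearization you invoke is not available in general: the class of the affine $G$-action on $\Alb T$ in $H^1(G,\Alb T)$ is torsion but can be nonzero (e.g.\ translation actions), and linearizing by an isogeny onto a quotient abelian variety may destroy exactly the factorization of $\hat\beta$ you need. Fortunately it is also not needed: the cover $T':=T\times_{\Alb T,[N]}\Alb T$ corresponds to a characteristic subgroup of $\pi_1(T)$ (the preimage of $N$ times $H_1(T,\Z)$ modulo torsion), hence is stable under every automorphism of $T$; so the $G$-action lifts to an extension $G'$ of $G$ by $Q$ with no linearization, every lift $g'$ satisfies $\alpha'\circ g'=u_g\circ\alpha'+\mathrm{const}$, and your computation goes through: the pullback of $\hat\beta$ equals $N\hat\gamma$ in the torsion-free module $\Hom(\Alb T',E)$, and the coboundary identity also holds on $Q$ because $Q$ moves $\alpha'$ only by constants.

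Second, ``faithful on $T\times E$, hence faithful on $T$'' is false (take $G=\Z/2$ acting trivially on $T$ and by $x\mapsto -x$ on $E$), and faithfulness on $T$ (or at least that $\ker(G\to\Aut(T))$ acts on $E$ by translations) is genuinely needed: your last step, descending faithfulness to $T'/K$, uses that $G'$ is faithful on $T'$, and without such a hypothesis even the statement breaks down, since a nontrivial element acting trivially on $T$ with a fixed point on $E$ makes the generic fibre of $(T\times E)/G\to T/G$ rational, while any model as in item (2) is quasi-\'etale covered by $T'\times E'$, so no compatible isomorphism can exist. This hypothesis is implicit in the paper and automatic in its application (there $G$ arises as the Galois group of a cover of the base); once you add it, your faithfulness step closes up by the standard irreducibility argument (if $g'$ is trivial on $T'/K$, it agrees on all of $T'$ with a single element of $K$), and the rest of your proposal is a valid alternative proof.
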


\begin{remark}
While $T$ is assumed to be smooth in the above theorem, we cannot guarantee that $T'$ will be smooth.
Indeed, if the $G$-action on $T\times E$ is diagonal but trivial on $E$, then the theorem produces $T'=T/G$ which has quotient singularities.
\end{remark}
 
\begin{proof}[Proof of Theorem \ref{thm:quotient}]
Since $T\times E\to T$ is $G$-equivariant, the action of an element $g\in G$ on $T\times E$ is of the form
$$
g\cdot (t,e)=(g\cdot t,g(t)\cdot e)
$$
where $(g,t)\mapsto g\cdot t$ denotes the $G$-action on $T$ and where 
$
t\mapsto g(t)
$
yields a morphism $T\to \Aut(E)
$ with corresponding action $(g(t),e)\mapsto g(t)\cdot e$ on $E$.
Since $E$ is an elliptic curve, there is a short exact sequence
$$
0\longrightarrow E\longrightarrow \Aut(E)\stackrel{\alpha}\longrightarrow \Aut(E,0)\longrightarrow 0 ,
$$
where $E\subset \Aut(E)$ acts on $E$ via translation and $\Aut(E,0)$ denotes the image of the natural map $\Aut(E)\to \Aut(H^1(X,\Z))$, which can be identified with the subgroup of automorphisms of $E$ that fix the origin.
In particular, $\Aut(E,0)$ is a cyclic group of order $2,4$ or $6$.
Since $T$ is irreducible and $\Aut(E,0)$ is discrete, $\alpha(g(t))\in \Aut(E,0)$ does not depend on $t$.
Hence there is a well-defined group homomorphism
\begin{align} \label{eq:GtoAutE}
G\longrightarrow \Aut(E,0),\ \ g\mapsto \alpha(g(t)).
\end{align}
Let $G_0\subset G$ be the kernel of the above group homomorphism.
Then $G_0\subset G$ is a normal subgroup of index $\leq 4$ or $6$, because $\Aut(E,0)$ is cyclic of order $2,4$ or $6$, and we consider
$$
Y_0:=(T\times E)/G_0,
$$
which is a finite cover of $Y:=(T\times E)/G$ of degree $\leq 4$ or $6$.

\medskip

\textbf{Step 1.}
Consider the projection 
$$
p:Y_0=(T\times E)/G_0\longrightarrow S_0:=T/G_0.
$$
Then the monodromy action of $\pi_1(S_0)$ on $H^1(p^{-1}(s), \Q)$ for a closed point $s\in S_0$ is trivial, and $R^1p_\ast \Q$ is a trivial local system on $S_0$.

\begin{proof}
Since $G_0$ is in the kernel of (\ref{eq:GtoAutE}), any fibre of
$
p:Y_0\to S_0
$
is either smooth or a multiple of a smooth elliptic curve.
Hence, $R^1p_\ast\Q$ is a local system  (which above the
multiple fibres can be checked via topological base change).
The monodromy is trivial because $\pi_1(S_0)$ is an extension of $G_0$ by $\pi_1(T)$, where $\pi_1(T)$ acts trivially on $E$ and $G_0$ acts trivially on $H^1(E,\Q)$, as it acts via the translation of points on $E$.  
This concludes step 1.
\end{proof}

\textbf{Step 2.}
There is an elliptic curve $F$ with an action of $G/G_0$ and a $G/G_0$-equivariant morphism $h:Y_0\to F$ which restricts to finite \'etale maps on the fibres of $p:Y_0\to S_0$.

\begin{proof}
Since $T$ is smooth, $Y_0$ has only quotient singularities. 
Since quotient singularities are rational \cite[Proposition 1]{vieweg}, $Y_0$ has rational singularities.
Hence, there is a well-defined Albanese morphism $a:Y_0\to \Alb(Y_0)$, induced by the Albanese morphism of a resolution of $Y_0$ \cite[Proposition 2.3]{Re83}, and we may consider the dual abelian variety
$$
\Alb(Y_0)^\vee:=\Pic^0(\Alb(Y_0)) .
$$ 
Similarly, $S_0:=T/G_0$ has rational singularities and so $\Alb(S_0)$ is well-defined and we consider its dual
$$
\Alb(S_0)^\vee:=\Pic^0(\Alb(S_0)) .
$$ 

The natural map $p:Y_0\to S_0$ induces a morphism $p_\ast:\Alb(Y_0)\to \Alb(S_0)$ of abelian varieties and via duality a morphism  $p^\ast:\Alb(S_0)^\vee\to \Alb(Y_0)^\vee$. 
Note that $G/G_0$ acts on $S_0$ and $Y_0$ in a compatible way and so  $p^\ast:\Alb(S_0)^\vee\to \Alb(Y_0)^\vee$ is $G/G_0$-equivariant, where we define the $G/G_0$-action on line bundles $L$ on $\Alb(Y_0)$ (resp.\ $\Alb(S_0)$) by 
$$
g\cdot L:=(g^{-1})^\ast L,
$$
where $g^{-1}:\Alb(Y_0)\to \Alb(Y_0)$ (resp.\ $g^{-1}:\Alb(S_0)\to \Alb(S_0)$) denotes the group homomorphism that is induced by the action of $g^{-1}$ on $Y_0$ (resp.\ on $S_0$). 

We claim that
$$
F:=\Alb(Y_0)^\vee/p^\ast \Alb(S_0)^\vee
$$ 
is an elliptic curve.
To this end, let $Y'_0\to Y_0$ and $S'_0\to S_0$ be resolutions such that $p:Y_0\to S_0$ induces a morphism $p':Y'_0\to S'_0$. 
Since $Y_0$ and $S_0$ have rational singularities,
$$
F=\Alb(Y_0)^\vee/p^\ast \Alb(S_0)^\vee=\Alb(Y'_0)^\vee/(p')^\ast \Alb(S'_0)^\vee .
$$

By construction, $F$ is an abelian variety and it remains to show that its dimension is one, i.e. it suffices to prove that $b_1(Y'_0)-b_1(S'_0)=2$.
For this let $C$ be a general fibre of $p':Y_0'\to S_0'$, which is an elliptic curve.
Consider the Albanese map $a:Y_0'\to \Alb(Y_0')$.
Then $a(C)\subset \Alb(Y_0')$ is either a point or an elliptic curve.
In fact, it must be an elliptic curve, because the monodromy action on $H^1(C,\Q)$ is trivial by step 1 and so the global invariant cycle theorem implies that each class of $H^1(C,\Q)$ extends to $Y'_0$.
The composition $Y_0'\to  \Alb(Y_0')\to \Alb(Y_0')/a(C)$ contracts a general fibre of $p'$.
Since $p'$ has connected fibres, the above composition must factor through $p'$ and hence through the composition of $p'$ with the Albanese map of $S_0'$.
This shows that $\Alb(S'_0)$ is isogeneous to $\Alb(Y_0')/a(C)$.
Since $a(C)$ is an elliptic curve, we get $b_1(Y'_0)-b_1(S'_0)=2$, as claimed. 
We have thus shown that $F$ as above is indeed an elliptic curve.

The choice of a $G/G_0$-invariant ample divisor $D$ on $\Alb(Y_0)$ induces an isogeny 
$$
\phi:\Alb(Y_0)\longrightarrow\Alb(Y_0)^\vee , \ \ x\mapsto D-D_x, 
$$
where $D_x=D+x$ denotes the translate of $D$ by $x$. 
By our definition of the action of $G/G_0$ on $\Alb(Y_0)^\vee$, we find that for $g\in G/G_0$,
$$
g\cdot (D-D_x)=g(D)-g(D)_{g(x)}=D-D_{g(x)},
$$
because $D$ is $G/G_0$-invariant and the action of $g$ yields a group homomorphism $g:\Alb(Y_0)\to \Alb(Y_0)$.
Hence, $\phi$ is $G/G_0$-equivariant.

The morphism
$$
h:Y_0\longrightarrow F,
$$
given as composition
\begin{align} \label{eq:composition}
Y_0\stackrel{a}\longrightarrow \Alb(Y_0)\stackrel{\phi}\longrightarrow \Alb(Y_0)^\vee \twoheadrightarrow F= \Alb(Y_0)^\vee/p^\ast \Alb(S_0)^\vee 
\end{align}
restricts to finite \'etale covers on the fibres of $p$.
(Indeed, the image of any fibre of $p$  in $\Alb(Y_0)$ is an elliptic curve that maps to a point in $\Alb(S_0)$ and so it maps dually nonconstantly through the isogeny $\phi$ and the quotient map $\Alb(Y_0)^\vee \twoheadrightarrow F$ to $F$.)
Since $\phi$ is $G/G_0$-equivariant, all morphisms apart from the Albanese morphism $a$ in the above composition (\ref{eq:composition}) are $G/G_0$-equivariant.
Moreover, the Albanese morphism is $G/G_0$-equivariant up to the translation of a point, which depends on the base point $y_0\in Y_0$ that we implicitly chose in the definition of the Albanese map.
More precisely, for any $g\in G/G_0$ and any $y\in Y_0$,
$$
g\cdot a(y)-a(g\cdot y)=\int_{g\cdot y_0}^{g\cdot y}-\int_{y_0}^{g\cdot y}=\int_{g\cdot y_0}^{y_0}
$$
depends only on $g$ and the base point $y_0$ but not on $y$.
Hence,
\begin{align}\label{eq:g*h(y)}
g\cdot h(y)=h(g\cdot y)+t_g ,
\end{align}
where $t_g\in F$ does not depend on $y\in Y_0$.

Note that $\Aut(E,0)$ acts faithfully on $H^1(E,\Q)$.  
Hence, $G/G_0\subset \Aut(E,0)$ acts faithfully on $H^1(E,\Q)$.
Since $F$ is isogeneous to $E$, $H^1(E,\Q)\cong H^1(F,\Q)$ and the action of $G/G_0$ on $F$ is faithful as well.

Recall that $G/G_0$ is cyclic of order at most six and let $g\in G/G_0$ be a generator.
We may without loss of generality assume that $G/G_0\neq \{1 \}$ and so $g$ is not the identity.
Since $G/G_0$ acts faithfully on $F$, the morphism $g-\id:F\to F$ is surjective and so there is an element $s\in F$ with $g\cdot s-s=-t_g$.
We then define
$$
h':Y_0\longrightarrow F,\ \ y\mapsto h(y)+s .
$$
By (\ref{eq:g*h(y)}),
$$
g\cdot h'(y)=g\cdot h(y)+g\cdot s=h(g\cdot y)+t_g+g\cdot s=h'(g\cdot y) ,
$$
holds for the generator $g$ of the cyclic group $G/G_0$, and so it holds in fact for all $g\in G/G_0$.
Hence, up to replacing $h$ by $h'$ (which essentially amounts to the choice of a different base point for the Albanese morphism), we may assume that $h:Y_0\to F$ is $G/G_0$-equivariant. 
This concludes step 2.
\end{proof}

Let $S'\subset h^{-1}(x)$ be a connected component of a general fibre of $h:Y_0\to F$. 
By Bertini's theorem for normality, $S'$ is normal, hence in particular integral, because it is connected by definition.
Since $h$ restricts to finite \'etale maps on the fibres of $p$, $p|_{S'}:S'\to S_0$ is finite with ramification induced by the multiple fibres of $p$. 
In particular, $S'$ is a multi-section of $p:Y_0\to S_0$.

\medskip

\textbf{Step  3.}
The normalization $Y'$ of $Y_0\times_{S_0}S'$ has the following properties:
\begin{enumerate}
\item the  natural map \label{item:step3:1}
$
\tau:Y'\longrightarrow Y_0 
$
is quasi-\'etale;
\item $Y'$ has only quotient singularities and hence rational singularities;\label{item:step3:2}
\item there is an elliptic curve $E'$ and an isomorphism over $S'$:\label{item:step3:3}
$
Y'\cong S'\times E' .
$ 
\end{enumerate}

\begin{proof}
Since $S'$ is normal, a local computation shows that $\tau$ is \'etale in codimension one,  see e.g.\ \cite[Lemma 5.11]{HS19}.
This proves (\ref{item:step3:1}).

To prove (\ref{item:step3:2}), note that $Y_0=(T\times E)/G_0$ has quotient singularities by construction.
That is, if we put $X_0:=T\times E$, then $X_0$ is smooth and there is a finite Galois cover $\epsilon:X_0\to Y_0$.
Consider the fibre product $X':= X_0\times_{Y_0}Y'$, which fits into a diagram 
$$
\xymatrix{
X' \ar[d]_{\epsilon'}\ar[r]^{\tau'} & X_0\ar[d]^{\epsilon}\\
Y'\ar[r]_{\tau}& Y_0 .
}
$$
Since $\epsilon$ is finite and $\tau$ is finite quasi-\'etale, $\tau'$ is finite quasi-\'etale.
Since $X_0$ is smooth, and quasi-\'etale maps are \'etale over the smooth locus (see Corollary \ref{cor:quasi-etale=etale}), $\tau'$ is in fact \'etale.
Hence, $X'$ is smooth.
Since $\epsilon$ is Galois, so is $\epsilon'$ and so $Y'$ has quotient singularities.
This proves item (\ref{item:step3:2}) because quotient singularities are rational, see \cite[Proposition 1]{vieweg}.

It remains to show (\ref{item:step3:3}).
For this, we prove first that it suffices to show that there is an elliptic curve $E'$ such that $Y'$ is birational to $S'\times E'$ over $S'$.
Indeed, such a birational map induces a rational map $Y'\dashrightarrow E'$, given as composition
$$
Y'\stackrel{\sim}\dashrightarrow S'\times E'\stackrel{\pr_2}\longrightarrow E'.
$$
Since $Y'$ has rational singularities by item (\ref{item:step3:2}) proven above and $E'$ is an elliptic curve, this rational map is in fact a morphism $Y'\to E'$.
Together with the natural morphism $Y'\to S'$, this induces a birational morphism $Y'\to S'\times E'$ which is finite, because any fibre of $Y_0\to S_0$ is (a multiple of) an elliptic curve and $Y'\to Y_0\times_{S_0}S'$ is finite.
Hence,  $Y'\to S'\times E'$ is an isomorphism by Zariski's main theorem, because $S'\times E'$ is normal.

It remains to show that  there is an elliptic curve $E'$ such that $Y'$ is birational to $S'\times E'$ over $S'$.
For this we take a non-empty open subset $U'\subset S'$, such that $p':Y'\to S'$ is smooth over $U'$.
 (Since $Y'$ is normal and $\dim S'=\dim Y'-1$, such $U'$ exists.)
Note that $p':Y'\to S'$ has a natural rational section, given by the fact that $Y'$ is birational to $Y_0\times_{S_0}S'$ and $S'$ is a multi-section of $Y_0\to S_0$. 
Hence, after shrinking $U'$ if necessary, we may assume that the smooth elliptic fibration $(p')^{-1}(U')\to U'$ admits a regular section.
Moreover, $(p')^{-1}(U')\to U'$ has trivial monodromy by step 1 and so it follows from the existence of a fine moduli space of elliptic curves with level structure that 
\begin{align} \label{eq:thm3.1-step3}
(p')^{-1}(U')\cong U'\times E'
\end{align} 
for some elliptic curve $E'$.
That is, $p':Y'\to S'$ and $\pr_1:S'\times E'\to S'$ are isomorphic over $U'$, as we want.
This concludes the proof of item (\ref{item:step3:3}) and hence finishes step 3.
\end{proof}

By \cite[Theorem 3.7]{GKP16}, there is a normal variety $S''$ and a finite Galois cover $S''\longrightarrow T/G$ with Galois group $G''$, which factors through the composition $S'\to S_0\to T/G$.
Let 
$$
Y'':=Y'\times_{S'}S'' .
$$ 
By item (\ref{item:step3:3}) in step 3 there is an isomorphism 
\begin{align} \label{eq:Y''=S''xE'}
Y''\cong S''\times E'
\end{align}
which is compatible with the natural projections to $S''$ on both sides.

\medskip

\textbf{Step 4.} 
The natural 
commutative diagram
\begin{align} \label{diagr:step4}
\xymatrix{
Y''\ar[r]\ar[d] &Y'\ar[r]\ar[d] &  Y_0 \ar[r]\ar[d] & Y:=(T\times E)/G\ar[d]\\
S''\ar[r] & S'\ar[r] &S_0\ar[r] & S:=T/G 
}
\end{align}
has the property that all its squares become Cartesian after base change to a non-empty open subset $U$ of $S$.
Moreover, all varieties in the above diagram are normal.

\begin{proof} 
Since $Y''\cong S''\times E'$ and $S''$ is normal, $Y''$ is normal as well.
Normality of $Y'$ follows from its definition and $S'$ is normal by Bertini's theorem for normality, as noted in the paragraph above step 3.
Moreover, $S_0=T/G_0$, $S=T/G$, $Y_0=(T\times E)/G_0$ and $Y=(T\times E)/G$ are normal as quotients of a normal (in fact smooth) variety by the action of a finite group. 
Hence, all varieties in (\ref{diagr:step4}) are normal. 
It thus remains to prove the first claim in step 4.

Since $Y''=Y'\times_{S'}S''$, the left most square in (\ref{diagr:step4}) is Cartesian.
Next, the normalization morphism $Y'\to Y_0\times_{S_0}S'$ is an isomorphism over a non-empty smooth open subset of $S_0$ 
 and so the middle square is Cartesian after restriction to a non-empty open subset.
Finally, by the universal property of fibre products, there is a finite morphism $Y_0\to Y\times_{S}S_0$ of degree one, which must be the normalization because $Y_0=(T\times E)/G_0$ is normal.
In particular, $Y_0$ and $Y\times_{S}S_0$ are isomorphic over a non-empty open subset of $S_0$.

Altogether, it follows that in (\ref{diagr:step4}) all inner squares, and hence by a basic property of fibre products in fact all squares  become Cartesian after base change to a non-empty open subset $U$ of $S$.
This concludes step 4.
\end{proof}


Since $S''\to S$ is Galois with Galois group $G''$, Corollary \ref{cor:Zariski} implies that $S''\to S_0$ is Galois as well and we denote its Galois group by $G''_0\subset G''$. 

\medskip

\textbf{Step 5.}
The finite morphisms $Y''\to Y$ and $Y''\to Y_0$ are Galois with groups $G''$ and $G''_0$, respectively.
Moreover, $G''_0\subset G''$ is a normal subgroup and there is an isomorphism $G/G_0\cong G''/G''_0$ such that the two natural actions of these groups on $Y_0\cong Y''/G''_0$ coincide.
\begin{proof}
By step 4, all varieties in (\ref{diagr:step4}) are normal and all squares become Cartesian after restriction to a non-empty open subset $U\subset S$.
In particular, after restiction to $U\subset S$, the morphisms  $Y''\to Y$ and $Y''\to Y_0$ are Galois with group $G''$ and $G''_0$, respectively, because $G''$ and $G''_0$ are the Galois groups of $S''\to S$ and $S''\to S_0$, respectively. 
But then Theorem \ref{thm:Zariski} implies
that $Y''\to Y$ and $Y''\to Y_0$ are Galois with group $G''$ and $G''_0$, respectively, as we want.

By construction, $Y_0\to Y$ is Galois with Galois group $G/G_0$.
That is, there is a natural isomorphism $\Aut(Y_0/Y)\cong G/G_0$, where $\Aut(Y_0/Y)$ denotes the subgroup of automorphisms of $Y_0$ that lie over the identity of $Y$.

We aim to construct a surjective group homomorphism
\begin{align} \label{eq:chi}
\chi: G''\longrightarrow \Aut(Y_0/Y)
\end{align}
with kernel $G''_0$.
For this, we fix a general point $y_0''\in Y''$ and for any point $y''\in Y''$, we denote its image in $Y_0=Y''/G''_0$ by $[y'']$.
For a group element $g\in G''$, we then have
$$
[g''\cdot y_0''] =\varphi [y_0'']\in Y''/G''_0
$$
for some $\varphi \in \Aut(Y_0/Y)$, because $Y_0\to Y$ is Galois.
Note that $\varphi$ is unique because $y''_0$ is general 
 and so $\Aut(Y_0/Y)$ acts faithfully transitively on the fibre of $Y_0\to Y$ above the image of $y''_0\in Y''$ in $Y\cong Y''/G''$.
We may thus define 
$$
\chi(g''):=\varphi\in  \Aut(Y_0/Y).
$$
Note the corresponding map $\chi:G''\to  \Aut(Y_0/Y)$ is surjective, because for a given element $\varphi\in \Aut(Y_0/Y)$, the point $\varphi[y_0'']$ is always of the form $[g''\cdot y_0''] $ for some $g''\in G''$, because $\varphi$ is an automorphism of $Y_0$ which lies over the identity of $Y\cong Y''/G''$.

By construction,
\begin{align} \label{eq:g''cdot-y''}
[g''\cdot y'']=\chi(g'') [y'']\in Y''/G''_0
\end{align}
holds for $y''=y_0''$ and in fact for all $y''\in Y''$ in a small analytic neighbourhood $U$ of $y_0''$ such the quotient map $Y''\to Y''/G''$ splits into a disjoint union of $|G''|$ copies of $U$ above the image $U/G''\subset Y''/G''$ of $U$ in $Y''/G''$.
Consider then the two maps $Y''\to Y''_0$, given by $y''\mapsto [g''\cdot y'']$ and  $y''\mapsto \varphi[y'']$, respectively.
The locus where these  two maps coincide is a Zariski closed subset of $Y''$ which contains a small analytic neighbourhood of $y_0''$.
Since $Y''$ is irreducible, we deduce that (\ref{eq:g''cdot-y''}) holds
for all $y''\in Y''$.

We now claim that $\chi$ is a group homomorphism.
For this, let $g_1'',g_2''\in G''$.
Then we have
$$
\chi(g_1''g_2'')[y_0'']=[g_1''g_2''y_0'']=\chi(g_1'')[g_2''y_0'']=\chi(g_1'')\chi(g_2'')[y_0''],
$$ 
where we used that (\ref{eq:g''cdot-y''}) holds for all $y''\in Y''$.
Since  $\Aut(Y_0/Y)$ acts faithfully transitively on the fibre of $Y_0\to Y$ above the image of $y''_0\in Y''$ in $Y\cong Y''/G''$, we deduce that
$$
\chi(g_1''g_2'')=\chi(g_1'')\chi(g_2'')
$$
for all $g_1'',g_2''\in G''$ and so $\chi$ is a group homomorphism.

As we have seen above, $\chi$ is surjective.
Clearly, $G''_0\subset \ker(\chi)$ and so $\chi$ induces a surjection
$$
G''/G''_0 \longrightarrow \Aut(Y_0/Y).
$$
On the other hand, $G''/G''_0$ and $ \Aut(Y_0/Y)$ have the same number of elements, because $Y_0\cong Y''/G''_0\to Y\cong Y''/G''$ is Galois.
Hence, $\ker(\chi)=G''_0$ and so the group homomorphism in (\ref{eq:chi}) is surjective with kernel $G''_0$, as we want.
Since $\Aut(Y_0/Y)\cong G/G_0$, $\chi$ induces in particular a natural isomorphism
$$
G''/G''_0\cong G/G_0
$$
such that the two natural actions of these groups on   $(S''\times E')/G''_0\cong Y_0$ coincide.
This concludes step 5.
\end{proof}

We consider the composition
$$
h'':S''\times E'\cong Y''\stackrel{\pi_0}\longrightarrow (S''\times E')/G''_0\cong Y_0\stackrel{h}\longrightarrow F ,
$$
where $\pi_0$ denotes the quotient map.

\medskip

\textbf{Step 6.}
There is a finite \'etale morphism $\epsilon:E'\to F$ with
$
h''=\epsilon\circ \pr_2 .
$

\begin{proof}
Let $\tilde S''$ be a resolution of singularities of $S''$. 
This induces a resolution $\tilde Y''\cong \tilde S''\times E'$ of $Y''\cong S''\times E'$, see (\ref{eq:Y''=S''xE'}).
The morphism $h''$ induces a morphism $\tilde h'':\tilde Y''\to F$ and it suffices to show that this morphism factors through the second projection $\pr_2:\tilde S''\times E'\to E'$  (the claim that the map is \'etale is then automatic because $E'$ and $F$ are both elliptic curves).
By the universal property of the Albanese morphism, $\tilde h''$ factors as the composition of the Albanese morphism 
$$
\tilde Y''\longrightarrow \Alb(\tilde Y'')\cong \Alb(\tilde S'')\times E'
$$
and a morphism
$$
\phi: \Alb(\tilde S'')\times E'\longrightarrow F .
$$
Since $\phi$ is a morphism between abelian varieties, it is a group homomorphism up to  translation by a point.
This implies that it suffices to show that $\phi$ contracts $\Alb(S'')\times \{0\}$ to a point.
But for this it suffices to show that $\tilde h''$ contracts $\tilde S''\times \{0\}$ to a point.
Or equivalently, $h''$ contracts $S''\times \{0\}$ to a point.
By construction, $S''\times \{0\}$ maps via $Y''\to Y_0$ to $S'\subset h^{-1}(x)$ and so it is clear that $h''(S''\times \{0\})=x$ is a point.
This finishes the proof of step 6.
\end{proof}

By step 5, $G''$ acts on $Y''$ and this action is by construction compatible with the natural $G''$-action on $S''$. 
By (\ref{eq:Y''=S''xE'}), $Y''\cong S''\times E'$ such that $Y''\to S''$ corresponds to the first projection 
and so $G''$ acts naturally on $S''$ and $S''\times E'$ in compatible ways.
Moreover, step 5 implies that
$$
(S''\times E')/G''\cong Y=(T\times E)/G
$$
and $S''/G''\cong T/G$.
The action of an element $g''\in G''$ on $(s'',e')\in S''\times E'$ is thus of the form 
\begin{align} \label{eq:g''(s'')}
g''\cdot (s'',e')=(g''\cdot s'',g''(s'')\cdot e'),
\end{align}
for some morphism $S''\to \Aut(E')$.

\medskip

\textbf{Step 7.}
The action of $G''$ on $S''\times E'$ is diagonal, i.e.\ for any $g''\in G''$, the element $g''(s'')\in \Aut(E')$ in (\ref{eq:g''(s'')}) does not depend on $s''$.

\begin{proof}
Recall the $G/G_0$-equivariant morphism $h:Y_0\to F$ from step 2.
By step 5, $G''/G''_0\cong G/G_0$ and so $h$ is equivariant with respect to an action of $G''/G''_0$.
In particular,  the above $G/G_0$-action on $F$ induces an action of $G''$ on $F$ whose restriction to $G''_0$ is trivial.
Since $h'':Y''\to F$ is given by $h''=h\circ \pi_0$, it factors through the projection $\pi_0:Y''\to Y_0$  and so $h''$ is equivariant with respect to the natural actions of $G''$ on $Y''$ and $F$, respectively.
By (\ref{eq:Y''=S''xE'}), $Y''\cong S''\times E'$ and by step 6,  $h''=\epsilon\circ \pr_2$ for a finite map $\epsilon:E'\to F$.

Explicitly, for $g''\in G''$ and $(s'',e')\in S''\times E'$, (\ref{eq:g''(s'')}) then implies 
$$
h''(g''\cdot (s'',e'))=h''((g''\cdot s'',g''(s'')\cdot e') )=\epsilon(g''(s'')\cdot e' ).
$$
Since $h''$ is $G''$-equivariant, we also have
$$
h''(g''\cdot (s'',e'))=g''\cdot h'(s'',e')=g''\cdot \epsilon(e').
$$
Hence,
$$
\epsilon(g''(s'')\cdot e' )=g''\cdot \epsilon(e') 
$$
for all $s''\in S''$ and $e'\in E'$.
Since $\epsilon$ is finite, this is only possible if $g''(s'')$ in (\ref{eq:g''(s'')})  does not depend on $s''$.
Hence, the action of $G''$ on $S''\times E'$ is diagonal, as we want.
\end{proof}

By step 7, $G''\subset \Aut(S'')\times \Aut(E')$.
Since $S''\to S$ is Galois with group $G''$, we find that $G''$ acts faithfully on $S''$, i.e.\ the natural map $G''\to \Aut(S'')$ is injective.
A priori, $G''$ might not act faithfully on $E'$ and we denote the kernel of the natural group homomorphism $G''\to \Aut(E')$ by $H$.
Then $H\subset G''$ is a normal subgroup and we put 
$$
G':=G''/H\ \ \text{and}\ \ T':=S''/H.
$$
By construction, $G'$ acts on $T'\times E'$ and $T'$ with quotients
$$
(T'\times E')/G'\cong Y= (T\times E)/G \ \text{and}\ \ T'/G'\cong  T/G ,
$$
such that the natural maps
$$
(T'\times E')/G'\to T'/G'\ \ \text{and}\ \ (T\times E)/G\to T/G
$$
are identified with each other.
This proves item (\ref{item:thm:quotient:1}) in Theorem \ref{thm:quotient}.

Also, $G'$ acts diagonally and faithfully on each factor of $T'\times E'$ and so its action on $T'\times E'$ is free in codimension one, i.e.\ $T'\times E'\to Y$ is quasi-\'etale.
This proves item (\ref{item:thm:quotient:2}) in Theorem \ref{thm:quotient}.

Finally, if we put $G'_0:=G''_0/H$, then $G'/G'_0\cong G''/G''_0$.
It thus follows from step 5 and the construction of $G_0\subset G$ in (\ref{eq:GtoAutE}) that $G'_0$ is a normal subgroup of $G'$ of index $\leq 4$ or $6$, and $(T'\times E')/G'_0\cong Y_0 $.
Since $p:Y_0\to S_0$ has trivial monodromy by step 1, the action of $G'_0$ on $E'$ is given by translation by torsion points.
Since $G'$ acts faithfully on $E'$, it follows that $G'_0$ acts freely on $E'$ and hence freely on $T'\times E'$.
This proves item (\ref{item:thm:quotient:3}), which concludes the theorem. 
\end{proof}

\begin{remark}
A crucial point in the above argument is the existence of the $G/G_0$-equivariant morphism $h:Y_0\to F$ from step 2.
This uses in an essential way that $G/G_0$ is a cyclic group, which in turn relies on the classification of the automorphism groups of elliptic curves.
In particular, the above proof does not generalize to the situation where $E$ is an abelian variety of higher dimension or a smooth projective curve of higher genus, so that the quotient of the automorphism group $\Aut(E)$ by the subgroup of those automorphisms that act trivial on cohomology, is not cyclic.
It is conceivable that also the result of Theorem \ref{thm:quotient} does not generalize to this more general setting.
\end{remark}
 
\section{Minimal models with $\kappa=n-1$ and generically isotrivial Iitaka fibration}

In this section we use Theorem \ref{thm:quotient} to prove Theorem \ref{thm:quotient-type}, stated in the introduction.

\begin{proof}[Proof of Theorem \ref{thm:quotient-type}]
Let $X$ be a minimal model of dimension $n$ and Kodaira dimension $n-1$.
Then $X$ is a good minimal model by Theorem \ref{thm:good-minmod-kappa=n-1} and so the Iitaka fibration $f:X\to S$ is a morphism with 
\begin{align} \label{eq:KX=f*A}
K_X \sim_{\Q} f^\ast A
\end{align} 
for some ample $\Q$-divisor $A$ on $S$.

Since (\ref{item:quotient-type:2}) $\Rightarrow$ (\ref{item:quotient-type:1})  in Theorem \ref{thm:quotient-type} is obvious, it suffices to prove the converse implication.
For this, we assume that $f$ is generically isotrivial.
We then proceed in several steps.
 
\textbf{Step 1.}
There is a finite group $G'$ and a non-empty open subset $U\subset S$, such that $X_U:=f^{-1}(U)\to U$ is given by
$$
X_U\cong (U'\times E')/G' \to U'/G'\cong U,
$$
where $E'$ is an elliptic curve and $U'$ is a $G'$-variety with $U'/G'\cong U$ whose action on $U'$ lifts to $U'\times E'$ making the first projection $G'$-equivariant.

\begin{proof}
By assumptions, $f$ is generically isotrivial with typical fibre an elliptic curve $E'$.
Let $U\subset S$ be a sufficiently small Zariski open non-empty subset, such that $U$ and $f|_{X_U}:X_U\to U$ are smooth.
Then $R^1f_\ast \Z|_U$ is a local system.
Moreover, $X_U$ is smooth and $f|_{X_U}$ is a proper morphism between complex manifolds which is locally isotrivial.
Hence, $f|_{X_U}$ is an analytic fibre bundle by \cite{FG}.
Since the identity component of $\Aut(E')$ acts trivially on $H^1(E',\Z)$, this implies that the monodromy of $R^1f_\ast \Z|_U$ is finite.
That is, for any fixed base point $u\in U$, the image of $\pi_1(U,u)\to \Aut(H^1(X_u,\Z))=\GL_2(\Z)$ is a finite group.
Hence, there is an irreducible variety $U'$ and a finite Galois \'etale covering $U'\to U$ with Galois group $G'$, such that the base change $X_{U'}:=X\times_SU'$ is an elliptic fibre bundle over $U'$ with trivial monodromy.
Since $X$ is projective, $X_{U'}\to U'$ admits a rational multi-section.
Hence, up to shrinking $U$ and replacing $U'\to U$ by another Galois \'etale covering (where we use Corollary \ref{cor:Zariski}), we may assume that $f':X_{U'}\to U'$ admits a section, cf.\ \cite[Section 6.9]{Kol93} for a similar argument.
Note also that $R^1f'_\ast\Z$ is a trivial local system on $U'$ by construction.
The existence of a fine moduli space for elliptic curves with level structure thus shows
\begin{align} \label{eq:X_U2}
X_{U'}\cong U'\times E',
\end{align}
because we know that any fibre of $X_{U'}\to U'$ is isomorphic to $E'$.
Moreover, since $X_{U'}\to X_U$ is Galois with Galois group $G'$, there is a natural action of $G'$ on $U'\times E'$ with quotient $X_U$.
This concludes step 1.
\end{proof}

\textbf{Step 2.}
There is an elliptic curve $E$, a normal projective variety $T$, and a finite group $G\subset \Aut(T)\times \Aut(E)$ whose action on $T\times E$ is diagonal and faithful on each factor, such that:
\begin{itemize}
\item there is a normal subgroup $G_0\subset G$ of index $\leq 4$ or $6$ whose action on $T\times E$ is free;
\item  $Y=(T\times E)/G$ fits into a commutative diagram
\begin{align} \label{diag-step2}
\xymatrix{
X \ar[d]^f \ar@{-->}[r]^-\varphi  & Y=(T\times E)/G \ar[d]^{g}\\
S     & \ar[l]^h T/G ,
}
\end{align}
where $\varphi$ is a birational map, $h$ is a birational morphism, and $g$ is the canonical morphism induced by the projection $\pr_1:T\times E\to T$.  
\end{itemize} 

\begin{proof}
By step 1, there is an open subset $U\subset S$, such that  $X_{U}:=f^{-1}(U)$ is a quotient
$$
X_U=(U'\times E')/G' .
$$ 
By equivariant resolution of singularities \cite{AW97}, there is a smooth projective $G'$-variety $T'$, with a $G'$-equivariant birational map to $U'$, such that $T'/G'$ admits a birational morphism to $S$.
The action of an element $g'\in G'$ on $U'\times E'$ is of the form
$$
g'\cdot (u',e')=(g'\cdot u',g'(u')\cdot e')
$$
for a morphism $g':U'\to\Aut(E')$.
Since $T'$ is smooth and each connected component of $\Aut(E')$ is isomorphic to $E'$, the induced rational map $T'\dashrightarrow \Aut(E')$ must be a morphism $T'\to \Aut(E')$ and so the action of $G'$ on $U'\times E'$ extends to a $G'$-action on $T'\times E'$ such that $T'\times E'\to T'$ is $G'$-equivariant.
The claim in step 2 then follows by applying Theorem \ref{thm:quotient} to the action of $G'$ on $T'\times E'$.
\end{proof}

\textbf{Step 3.}
We may in step 2 assume that:
\begin{itemize}
\item  $T$ is a canonical model over $S$; in particular, $T$ has only canonical singularities and $K_T$ is ample over $S$.
\item $K_Y \sim_{\Q} g^\ast B$ for an $h$-ample $\Q$-divisor $B$ on $T/G$, where $g$ and $h$ are as in (\ref{diag-step2}).
\end{itemize}

\begin{proof}
Replacing $T$ by a $G$-equivariant resolution of singularities, we may assume that $T$ is smooth.
The natural map $T\to S$ is $G$-equivariant, where we take the trivial action of $G$ on $S$.
The relative canonical model $T^c$ of $T$ over $S$ exists because $T$ is smooth and $T\to S$ is generically finite, see \cite{BCHM}.
Since $T\to S$ is $G$-equivariant, $T^c$ inherits a natural action of $G$, see Section \ref{subsec:G-MMP}.
This action induces a diagonal action on $T^c\times E$ which coincides birationally with the given diagonal action of $G$ on $T\times E$.
Hence, up to replacing $T$ by $T^c$, we may assume that $T\to S$ is a relative canonical model and so $K_T$ is ample over $S$ and $T$ has only canonical singularities.

Since $G$ acts diagonally and faithfully on each factor of $T\times E$, the quotient map $\xi:T\times E\to Y$ is quasi-\'etale and so 
$$
\xi^\ast K_Y \sim K_{T\times E}  \sim \pr_1^\ast K_T,
$$
because $E$ is an elliptic curve.
Note that $K_{T\times E}$ is $\Q$-Cartier and so is $K_Y$ by \cite[Proposition 5.20]{kollar-mori}. 
Since $K_T$ is $G$-invariant, this shows that $K_Y\sim_\Q g^\ast B$ for a $\Q$-Cartier $\Q$-divisor $B$ on $T/G$ whose pullback to $T$ coincides with $K_T$. 
Since $K_T$ is relatively ample over $S$ and $T\to T/G$ is finite, $B$ must be $h$-ample, as we want.
\end{proof}

\textbf{Step 4.}
The rational map $\varphi:X\dashrightarrow Y$ in (\ref{diag-step2}) does not extract any divisor. 
In particular, $\varphi_\ast K_X  \sim K_Y$.
 
\begin{proof}
For a contradiction, assume that $\varphi$ extracts some divisors  $E_i\subset Y$, i.e.\ the $E_i$ are the $\varphi^{-1}$-exceptional divisors.
Since $X$ is terminal, we find
\begin{align} \label{eq:K_Y}
K_Y  \sim_{\Q} \varphi_\ast K_X+\sum a_iE_i
\end{align}
for some $a_i>0$.
Recall that $\varphi$ is an isomorphism over an open subset of $S$ by step 1 (and the argument in step 2, resp.\ in Theorem \ref{thm:quotient}). 
Since $g$ is equi-dimensional, this implies that each $E_i$ is of the form $E_i=(D_i\times E)/G$ for some $G$-invariant divisor $D_i\subset T$ which is contracted via $p:T\to S$.
 In fact, the $D_i$ are exactly the divisors on $T$ that are contracted by the natural map $p:T\to S$. 

Since $T\times E\to Y$ is quasi-\'etale, $K_Y$ pulls back to $K_{T\times E}  \sim_{\Q} \pr_1^\ast K_T$ and so (\ref{eq:K_Y}) implies
\begin{align} \label{eq:K_T}
K_T \sim_{\Q} p^\ast A+\sum a_iD_i ,
\end{align}
where $D_i$ and $a_i$ are as above and $A$ is the ample $\Q$-divisor on $S$ with $K_X \sim_{\Q} f^\ast A$ from (\ref{eq:KX=f*A}).
Let 
$$
T\stackrel{\rho}\longrightarrow S'\stackrel{p'}\longrightarrow S
$$ 
be the Stein factorization of $p:T\to S$. 
Then $\rho$ is birational and the exceptional divisors of $\rho$ are exactly given by the divisors $D_i$. 

Since $K_T$ is ample over $S$, we deduce from (\ref{eq:K_T}) that $\sum a_iD_i $ is $\Q$-Cartier (because $p^\ast A$ and $K_T$ are $\Q$-Cartier) and ample over $S'$.
Note also that the pushforward of  $-\sum a_iD_i $  to $S'$ is trivial  and hence effective.
 But applying \cite[Lemma 3.39]{kollar-mori} to the birational morphism $\rho$, we get that $-\sum a_iD_i $ must be effective,  
which is a contradiction, because $a_i>0$.
\end{proof}

\textbf{Step 5.}
In (\ref{diag-step2}), $h$ is an isomorphism,
$Y$ has only canonical singularities, $K_Y  \sim_{\Q} g^\ast h^\ast A$ and $K_T$ is ample, given by the pullback of $A$ via $T\to T/G\cong S$.

\begin{proof}
By step 4, $
\varphi_\ast K_X  \sim K_Y 
$.
Since  $K_X \sim_{\Q} f^\ast A$ and $K_Y \sim_{\Q} g^\ast B$, this implies
$$
g^\ast h^\ast A \sim_{\Q} g^\ast B.
$$
Since $B$ is $h$-ample, this implies that $h$ must be finite and so it is an isomorphism, because it is birational by construction.
Using $h$ to identify $S$ with $T/G$, we find $K_Y\sim_{\Q} g^\ast B  \sim_{\Q} g^\ast  h^\ast A$ and so $K_X \sim_{\Q}\varphi^\ast K_Y$ (see Section \ref{subsec:convention} for the definition of $\varphi^*$).
Let $\tilde X\to X$ be a resolution of the birational map $\varphi:X\dashrightarrow Y$.
Then $\tilde X$ is a common resolution of $X$ and $Y$ and so we can use $\tilde X$ to compute the discrepancies of $X$ and of $Y$. 
Since $X$ is terminal, the only possible exceptional divisors with nonpositive discrepancy for $\tilde{X}\to Y$ are coming from exceptional divisors of $\varphi$, which in fact have discrepancy $0$ because $K_X  \sim_{\Q} \varphi^\ast K_Y$.
This implies that $Y$ has only canonical singularities.
Finally, since $T\times E\to Y$ is quasi-\'etale by step 2, $K_Y  \sim_{\Q} g^\ast  h^\ast A$ pulls back to $K_{T\times E}  = \pr_1^\ast K_T$.
This shows that  $K_T$ is $\Q$-linearly equivalent to the pullback of $A$ via the finite morphism $T\to T/G\cong S$.
Since $A$ is ample, it follows that $K_T$ is ample as well. 
This concludes step 5. 
\end{proof}

\textbf{Step 6.}
The birational map $\varphi:X\dashrightarrow Y$ in (\ref{diag-step2}) factors as
$$
X \stackrel{\phi}\dashrightarrow X^+\stackrel{\tau}\longrightarrow Y=(T\times E)/G,
$$
where $\phi$ is a sequence of flops and $\tau$ is a terminalization of $Y$. 

\begin{proof}
Since $Y$ is canonical by step 5, there is a terminalization $\tau:X^+\longrightarrow Y$ with $\tau^\ast K_Y = K_{X^+}$, see Section \ref{subsec:terminalization}.
Since $K_Y\sim_{\Q} g^\ast B$ and $h$ is an isomorphism by step 5, $K_{X^+}$ is nef over $S$.
Hence, $X$ and $X^+$ are birational minimal models over $S$ and so they are connected by a sequence of flops, see \cite{kawamata-flops}. 
\end{proof}

The proof of Theorem \ref{thm:quotient-type} follows immediately from steps 2, 5 and 6 above.
\end{proof}

\begin{corollary} \label{cor:quotient-type}
Assume in the notation of Theorem \ref{thm:quotient-type} that for any codimension one point $s\in S^{(1)}$, the fibre of $f$ above $s$ is an irreducible curve.
Then the action of $G$ on $T\times E$ is free in codimension two.
\end{corollary}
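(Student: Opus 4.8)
The plan is to prove the contrapositive: assuming the diagonal action of $G$ on $T\times E$ is \emph{not} free in codimension two, I would exhibit a prime divisor $\Delta\subset S$ over whose generic point the fibre of $f$ is reducible, contradicting the hypothesis. The first step locates a ``bad'' group element. Since the action is diagonal and faithful on each factor it is free in codimension one, so $\codim_{T\times E}\Fix(g)\ge 2$ for every $1\ne g\in G$; if it is not free in codimension two there is $1\ne g=(g_T,g_E)\in\Aut(T)\times\Aut(E)$ with $\codim\Fix(g)=2$. From $\Fix(g)=\Fix(g_T)\times\Fix(g_E)$ together with $g_T\ne\id$ and $g_E\ne\id$ (faithfulness on each factor) one checks that $\Fix(g_E)$ is a nonempty finite set and that $\Fix(g_T)$ has an irreducible component $D\subset T$ of codimension one, a prime divisor on which $g_T$ acts trivially. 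I then fix $p\in\Fix(g_E)$, set $V:=D\times\{p\}\subseteq\Fix(g)$ --- a codimension--two subvariety of $T\times E$ --- let $\Delta\subset S$ be the image of $D$ under the finite map $T\to S\cong T/G$ (this is how $g$ and $S$ are described in Theorem~\ref{thm:quotient-type}), and let $\eta$ be the generic point of $\Delta$.

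Next I would show that $Y=(T\times E)/G$ is not terminal along the image $\bar V$ of $V$. At a general point $v\in V$ the variety $T$ is smooth --- normal varieties are smooth in codimension one and $D$ is a divisor --- so $T\times E$ is smooth near $v$; linearising the action of the stabiliser $\Sigma:=\operatorname{Stab}_G(v)$ near $v$, the group $\Sigma$ acts on the two--dimensional normal space $N$ to $V$ at $v$, faithfully (an element acting trivially on $N$ and on $V$ is trivial near $v$, hence everywhere, since $T\times E$ is irreducible) and without pseudo--reflections (a pseudo--reflection would produce a $G$-fixed divisor on $T\times E$, against freeness in codimension one), and $\Sigma\ne 1$ because $g\in\Sigma$. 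Hence $Y$ is, analytically near the image $y$ of $v$, isomorphic to $\C^{n-2}\times(N/\Sigma)$, with $N/\Sigma$ a non--smooth surface quotient singularity, which --- since $Y$ is canonical --- must be a Du Val singularity, and in particular is not terminal. Pulling back a crepant exceptional divisor of its minimal resolution and taking the product with $\C^{n-2}$ produces a divisor $F$ over $Y$ with discrepancy $0$ whose centre on $Y$ is $\bar V$ (it is irreducible, contains a dense open subset of $\bar V$, and has the same dimension). As the terminalisation $\tau\colon X^{+}\to Y$ of Theorem~\ref{thm:quotient-type} is crepant, $F$ has discrepancy $0$ over $X^{+}$ as well; since $X^{+}$ is terminal, $F$ cannot be exceptional over $X^{+}$, i.e.\ $F$ is a prime divisor on $X^{+}$, and it is $\tau$-exceptional because $\tau(F)=\bar V$ has codimension two in $Y$.

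I would then deduce that the fibre of $f$ over $\eta$ is reducible. The composite $T\times E\to S$ factors as $\pr_1$ followed by the finite map $T\to T/G$, so $\bar V$ maps finitely onto $\Delta$; consequently $F$, of dimension $n-1$, dominates $\Delta$ via $g\circ\tau$, and its restriction to the fibre $(X^{+})_\eta$ is a curve, namely a $\tau$-exceptional component of $(X^{+})_\eta$. On the other hand $g\colon Y\to S$ is surjective and equidimensional with one--dimensional fibres, so $Y_\eta$ is a nonempty curve, and the strict transform under $\tau$ of one of its components is a component of $(X^{+})_\eta$ that is not $\tau$-exceptional, hence is distinct from the one coming from $F$. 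Thus $(X^{+})_\eta$ has at least two irreducible components. Finally, $\phi\colon X\dashrightarrow X^{+}$ is a composition of flops over $S$, whose flopping loci have codimension $\ge 2$ in $X$ and are covered by curves contracted over $S$; their images in $S$ therefore contain no prime divisor, so $\phi$ is an isomorphism over $\eta$. Hence the fibre of $f$ over $\eta$ is isomorphic to $(X^{+})_\eta$, hence reducible, contradicting the assumption.

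The main obstacle is the middle step: reading off the local structure of $Y$ along the codimension--two fixed locus $\bar V$ and concluding non--terminality, so that the terminalisation is forced to extract an extra vertical divisor over $\Delta$. The delicate points are that $T$ is only assumed normal, so one must work at a general, automatically smooth, point of $D$; that freeness in codimension one rules out pseudo--reflections in the local stabiliser, so that $N/\Sigma$ is a genuine singularity; and that canonicity of $Y$ forces it to be Du Val, hence to carry a crepant exceptional divisor --- which every terminalisation must extract. The remaining bookkeeping (that flops leave codimension--one fibres unchanged, and that a $\tau$-exceptional divisor over $\bar V$ genuinely adds a fibre component distinct from the strict transform of $Y_\eta$) is routine.
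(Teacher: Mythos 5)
Your proposal is correct and follows essentially the same route as the paper: locate a nontrivial $g$ fixing a codimension-two set $D\times\{p\}$, observe that $Y$ is then singular (non-terminal) along its image, conclude that the crepant terminalization $\tau$ must extract a divisor lying over it, and derive a reducible fibre of $f$ over the generic point of the divisor $\Delta\subset S$, which flops cannot affect. The only difference is cosmetic: where you linearize the stabilizer, identify a transverse Du Val singularity and do the discrepancy bookkeeping explicitly, the paper simply cites Fujiki's result that fixed points map to singular points of the quotient together with smoothness in codimension two of terminal varieties.
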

\begin{proof}  
Assume for a contradiction that the action of $G$ on $T\times E$ is not free in codimension two. 
Since $G$ acts diagonally and faithfully on each factor, there is a nontrivial element $g\in G$ that fixes a codimension two set of the form $D\times \{e\}$ for some $e\in E$ and some prime divisor $D\subset T$.
Since $G$ acts freely in codimension one on $T\times E$, the fixed points of nontrivial elements of $G$ correspond to singular points of the quotient $Y=(T\times E)/G$ (see e.g.\ \cite[p.\ 296, Corollary]{Fuj74}) and so the image 
$$
\overline {D\times \{e\}} \subset Y
$$
of $D\times \{e\}$ lies in the singular locus of $Y$.
Moreover, the image of this codimension two locus via $g:Y\to T/G$ is a divisor $\overline D\subset T/G$. 
Since terminal varieties are smooth in codimension two, the terminalization $\tau:X^+\to Y$ in Theorem \ref{thm:quotient-type} resolves the singularity at the generic point of $\overline {D\times \{e\}}$.
Since $\phi$ is an isomorphism in codimension one, this implies that $X$ contains a prime divisor $R\subset X$ with $ \tau(\phi(R))= \overline {D\times \{e\}}$.

Firstly, notice that the fibres of $g:Y\to S$ are smooth elliptic or rational curves. 
Secondly, $\tau$ has connected fibres by Zariski's main theorem because $Y$ is normal. 
Thirdly, all fibres of $g\circ\tau$ over codimension one points are curves, since all fibres $f$ over codimension one points are curves, and $\phi$ is a composition of flops, hence an isomorphism in codimension one.
Thus the terminalization $\tau$, which resolves the singularities at generic point of $\overline {D\times \{e\}}$, will add more components in the fibres of $g$ above the generic point of $\overline {D}\cong f(R)$.
But then  
the fibre of $f$ above the generic point of $f(R)\subset S$ must be reducible, which contradicts our assumption because $f(R)\cong \overline D$ is a divisor on $S\cong T/G$.
This concludes the corollary.
\end{proof}

\begin{corollary} \label{cor:T=canonical}
In the notation of Theorem \ref{thm:quotient-type}, $T$ has canonical singularities.
\end{corollary}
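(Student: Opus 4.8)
The plan is to descend the canonicity of $Y$ first to the product $T\times E$, via the quasi-\'etale quotient map, and then from $T\times E$ to $T$, via the smooth projection with fibre an elliptic curve.

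First I would record what Theorem \ref{thm:quotient-type} (and its proof) already gives: $Y=(T\times E)/G$ has only canonical singularities, $K_Y$ is $\Q$-Cartier, $K_T$ is ample and in particular $\Q$-Cartier, and $G$ acts diagonally on $T\times E$ and faithfully on each factor. As noted in the introduction, a diagonal action that is faithful on each factor is automatically free in codimension one, so the quotient map $\xi\colon T\times E\to Y$ is finite and \'etale in codimension one, i.e.\ quasi-\'etale. Since canonical singularities are preserved under quasi-\'etale covers — the ramification formula yields $\xi^\ast K_Y=K_{T\times E}$ and one then compares discrepancies on a common resolution, see \cite[Proposition 5.20]{kollar-mori} — I would conclude that $T\times E$ is canonical.

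The last step is to transfer canonicity from $T\times E$ down to $T$ along $\pr_1\colon T\times E\to T$. I would fix a resolution $\rho\colon \tilde T\to T$, write $K_{\tilde T}=\rho^\ast K_T+\sum a_iE_i$, and observe that, since $E$ is smooth, $\rho\times\id_E\colon \tilde T\times E\to T\times E$ is a resolution of $T\times E$ whose exceptional divisors are exactly the $E_i\times E$; moreover, from $K_{\tilde T\times E}=\pr_1^\ast K_{\tilde T}$ and $K_{T\times E}=\pr_1^\ast K_T$ one gets that the discrepancy of $E_i\times E$ over $T\times E$ equals $a_i$. Canonicity of $T\times E$ then forces all $a_i\geq 0$, and since this condition on one resolution already certifies canonicity (any other resolution is dominated by a common one, on which the discrepancies remain nonnegative), $T$ is canonical; together with the ampleness of $K_T$ from Theorem \ref{thm:quotient-type}, this is the assertion.

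I expect the only genuine input to be the stability of canonical singularities under quasi-\'etale morphisms; the product formula for discrepancies on $\tilde T\times E$ and the reduction of the canonicity condition to a single resolution are routine, so I do not anticipate a serious obstacle.
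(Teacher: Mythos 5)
Your argument is correct and follows the paper's proof essentially verbatim: canonicity passes from $Y$ to $T\times E$ because the diagonal, componentwise-faithful action makes $T\times E\to Y$ quasi-\'etale (citing the same \cite[Proposition 5.20]{kollar-mori}), and then descends to $T$ along the product with $E$. Your second step merely spells out the discrepancy computation that the paper dismisses as clear, so there is no substantive difference.
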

\begin{proof}
Since $G$ acts diagonally and faithfully on each factor, the quotient map $T\times E\to Y$ is quasi-\'etale.
Since $Y$ is canonical by Theorem \ref{thm:quotient-type}, this implies that $T\times E$ is canonical, see \cite[Proposition 5.20.(3)]{kollar-mori}.
But then it is clear that $T$ is canonical as well.
\end{proof}

\section{Proof of main results}
\label{sec:proof}

\subsection{Proof of Theorem \ref{thm:kappa=n-1}}

\begin{lemma} \label{lem:Iitaka-flops}
Let $X$ and $X^+$ be good minimal models that are connected by a sequence of flops $\phi:X\dashrightarrow X^+$.
Let $f:X\to S$ and $f:X^+\to S^+$ be the respective Iitaka fibrations.
Then there is an isomorphism $\psi:S\stackrel{\sim}\longrightarrow S^+$ such that $\psi\circ f=f^+\circ \phi$.
\end{lemma}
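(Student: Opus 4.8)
The plan is to use the fact that flops are crepant, so that the canonical rings of $X$ and $X^+$ agree, and hence the Iitaka fibrations — being the maps defined by the pluricanonical systems — are abstractly the same up to the claimed isomorphism of bases. Concretely, since $\phi\colon X\dashrightarrow X^+$ is a composition of flops, it is an isomorphism in codimension one, and for flops one has $\phi^\ast K_{X^+}\sim_{\Q}K_X$ (flops are $K$-trivial birational contractions in the sense that they do not change the canonical divisor up to $\Q$-linear equivalence under the identification of Weil divisors via the codimension-one isomorphism). Therefore $\phi_\ast$ induces, for every $m$ with $mK_X$ Cartier, an isomorphism $H^0(X,mK_X)\stackrel{\sim}\longrightarrow H^0(X^+,mK_{X^+})$ of vector spaces, compatible with multiplication, i.e.\ an isomorphism of the canonical rings $R(X,K_X)\cong R(X^+,K_{X^+})$.

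Next I would recall that for a good minimal model the Iitaka fibration is realized by a morphism, and in fact $S=\operatorname{Proj}R(X,K_X)$ with $f$ the natural morphism to this Proj (for $m$ sufficiently divisible, $S$ is the image of the morphism defined by $|mK_X|$ and $f$ the induced fibration with $K_X\sim_{\Q}f^\ast A$, $A$ ample, by Theorem \ref{thm:good-minmod-kappa=n-1}). The same holds for $X^+$ and $S^+$. The ring isomorphism from the previous paragraph then yields an isomorphism $\psi\colon S=\operatorname{Proj}R(X,K_X)\stackrel{\sim}\longrightarrow \operatorname{Proj}R(X^+,K_{X^+})=S^+$. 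It remains to check $\psi\circ f=f^+\circ\phi$ as rational maps; but both sides are, by construction, the rational maps to $\operatorname{Proj}$ of the (identified) canonical ring given by evaluation of pluricanonical sections, and $\phi$ matches these sections, so the square commutes at least over the open locus where $\phi$ is an isomorphism. Since $f$ and $f^+$ are morphisms and $S,S^+$ are separated, an equality of morphisms on a dense open set extends, giving $\psi\circ f=f^+\circ\phi$ everywhere it makes sense; and as $\psi$ is an isomorphism of the bases the stated identity holds.

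The main obstacle I expect is the bookkeeping around base-points and the passage from the birational statement "$\phi$ matches pluricanonical sections" to an honest commutative diagram of morphisms: one must be careful that $S$ and $S^+$ are genuinely the Proj of the canonical rings (not merely birational to them) and that the isomorphism $\psi$ is induced compatibly with $f,f^+$ rather than just abstractly. A clean way to handle this is to fix one sufficiently divisible $m$ so that $|mK_X|$ and $|mK_{X^+}|$ define the Iitaka fibrations $f,f^+$ onto their images $S,S^+$, use that $\phi$ identifies $H^0(X,mK_X)$ with $H^0(X^+,mK_{X^+})$, and observe directly that the two composite rational maps $X\dashrightarrow \PP^N$ given by these linear systems agree via $\phi$; this forces the images and the fibration structures to correspond, yielding $\psi$ and the commutativity. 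A secondary minor point is to confirm that flops preserve $K_X$ up to $\Q$-linear (not merely numerical) equivalence under the codimension-one identification, which is standard for flops of good minimal models.
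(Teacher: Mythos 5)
Your proposal is correct, but it follows a different route than the paper. The paper reduces at once to a single flop and works with the flopping contractions $g:X\to Z$ and $g^+:X^+\to Z$: since the contracted ray is $K_X$-trivial, the cone theorem gives $K_X=g^\ast L$ for a $\Q$-Cartier divisor $L$ on $Z$, and because the contractions are small also $K_{X^+}=(g^+)^\ast L$, so both Iitaka fibrations factor through the Iitaka fibration of $L$ on $Z$ and the claim is immediate. You instead argue globally via the canonical ring: a composition of flops is an isomorphism in codimension one, hence identifies $H^0(X,mK_X)$ with $H^0(X^+,mK_{X^+})$ compatibly with multiplication (note this identification already follows from the codimension-one isomorphism and the intrinsic nature of $K$, so your ``secondary point'' about $\Q$-linear equivalence is not really needed), and for good minimal models the Iitaka fibration is the morphism to $\Proj$ of the canonical ring, so the base isomorphism $\psi$ and the commutativity follow by comparing the two maps on the common big open set and extending by density and separatedness. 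Your approach buys uniformity (no reduction to a single flop, no appeal to the cone theorem or the intermediate variety $Z$, and it applies verbatim to any $K$-preserving small birational map between good minimal models), at the cost of invoking finite generation/semi-ampleness bookkeeping around $\Proj R(X,K_X)$; the paper's argument is shorter and stays entirely at the level of the flop diagram.
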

\begin{proof}
It suffices to prove the lemma in the case where $\phi$ is a single flop.
In this case, let $g:X\to Z$ and $g^+:X^+\to Z$ be the corresponding flopping contractions.
Since $K_X$ is trivial on all curves that are contracted by $g$, the cone theorem \cite[Theorem 3.7.(4)]{kollar-mori} implies that $K_X=g^\ast L$ for some $\Q$-Cartier divisor $L$ on $Z$.
Since $g$ and $g^+$ are small contractions, $K_{X^+}=(g^+)^\ast L$.
This implies that the Iitaka fibration of $X$ and $X^+$ factor both through the Iitaka fibration of the $\Q$-Cartier divisor $L$ on $Z$, and so the statement is clear.
This proves the lemma.
\end{proof}

\begin{proof}[Proof of Theorem \ref{thm:kappa=n-1}]
Let $X$ be a minimal model of dimension $n$ and Kodaira dimension $n-1$.
By Theorem \ref{thm:good-minmod-kappa=n-1}, $X$ is a good minimal model and so the Iitaka fibration $f:X\to S$ is a morphism which equips $X$ with the structure of an elliptic fibre space. 

Let us first assume that $c_1^{n-2}c_2(X)=0$.
Then, by Corollary \ref{cor:grassi}, the smooth fibres of $f$ have constant $j$-invariants and the fibre of $f$ over any codimension one point of $S$ is an irreducible curve (it is either smooth or a multiple of a smooth elliptic curve).
Hence,  Theorem \ref{thm:quotient-type}, Corollary \ref{cor:quotient-type} and Corollary \ref{cor:T=canonical} imply that $X$ is birational to a quotient $Y=(T\times E)/G$ with canonical singularities, where $T$ is a projective variety with canonical singularities and ample canonical class, $E$ is an elliptic curve and $G\subset \Aut(T)\times \Aut(E)$ acts diagonally, faithfully on each factor and freely in codimension two on $T\times E$.
This proves one direction in Theorem \ref{thm:kappa=n-1}.

Conversely, assume that $X$ is birational to a quotient $Y=(T\times E)/G$, where $T$ is a projective variety with ample canonical class, and $G\subset \Aut(T)\times \Aut(E)$ acts diagonally, faithfully on each factor and freely in codimension two on $T\times E$ such that $Y$ is canonical. 
We then need to show that $c_1^{n-2}c_2(X)=0$.

Since $T\times E\to Y$ is quasi-\'etale and $K_T$ is ample, $K_Y$ is nef.
Since $Y$ is canonical, any terminalization $\tau:X^+\to Y$ of $Y$ (see Section \ref{subsec:terminalization}) is a minimal model that is birational to $X$.
Hence the natural birational map $\phi:X\dashrightarrow X^+$ is a sequence of flops by \cite{kawamata-flops}.
By Lemma \ref{lem:Iitaka-flops}, the Iitaka fibration $f:X\to S$ is given by the composition of $\phi$ with the Iitaka fibration $f^+:X^+\to S^+$ of $X^+$. 
(Note that both Iitaka fibrations are actual morphisms by Theorem \ref{thm:good-minmod-kappa=n-1}.)
Since $\tau$ is crepant, the Iitaka fibration of $X^+$ is given by the composition of $\tau$ and the Iitaka fibration of $Y$.
Since $T\times E\to Y$ is quasi-\'etale and $K_T$ is ample, the Iitaka fibration of $Y$ is a morphism, given by the natural map $g:Y\to T/G$, induced by the first projection $T\times E\to T$.
Altogether, we conclude that there is an isomorphism $S\cong T/G$ such that the Iitaka fibration $f:X\to S$ is given by the composition of the birational map $\tau\circ \phi:X\dashrightarrow Y$ and the Iitaka fibration $g:Y\to T/G$ of $Y$.
That is, we arrive at a commutative diagram
\begin{align} \label{eq:diag}
\xymatrix{
X \ar[dr]_f \ar@{-->}[r]^-\phi &X^+\ar[r]^-\tau & Y=(T\times E)/G \ar[dl]^{g}\\
& S  \cong T/G & .
}
\end{align}

Since $f$ is the Iitaka fibration of $X$, there is a very ample line bundle $A$ on $S$ such that $K_X$ is a linearly equivalent to a rational multiple of $f^\ast A$.
Let $C\subset S$ be the intersection of $n-2$ general elements of the linear series $|A|$.
By Lemma \ref{lem:grassi}, $c_1^{n-2}c_2(X)=0$ is then equivalent to showing that $Z:=f^{-1}(C)$ is a minimal elliptic surface over $C$ such that all singular fibres of $Z\to C$ are multiples of smooth elliptic curves.

Since $K_X$ is nef and the normal bundle of $Z$ in $X$ is a direct sum of nef line bundles, $K_Z$ is nef.
By Bertini's theorem and because $X$ is terminal, hence smooth in codimension two,  $Z$ is smooth.
In particular, $Z\to C$ is a minimal elliptic surface.

Since $C\subset S$ is the intersection of general hyperplanes, (\ref{eq:diag}) shows that $Z$ is birational to $g^{-1}(C)\subset Y$.
If $\tilde C$ denotes the preimage of $C\subset T/G$ via the quotient map $T\to T/G$, then $G$ acts on $\tilde C$ and 
$$
g^{-1}(C)=(\tilde C\times E)/G .
$$
Since $T$ is normal and $C\subset S\cong T/G$ is the intersection of general hyperplanes, $\tilde C$ is smooth by Bertini's theorem.
Since $G$ acts diagonally, faithfully on each factor and freely in codimension two on $T\times E$, it also follows that $G$ acts freely on $\tilde C\times E$. 
(Once again, this uses that $\tilde C\subset T$ is in general position, as $C\subset S\cong T/G$ is the intersection of general hyperplanes.) 
In particular, the quotient map $\tilde C\times E\to g^{-1}(C)$ is \'etale and we conclude that $g^{-1}(C)$ is a smooth projective minimal surface, birational to $Z$.
Minimal models of surfaces are unique and so $Z\cong g^{-1}(C)$.
Since $\tilde C\times E\to g^{-1}(C)$ is \'etale, all singular fibres of $Z\cong g^{-1}(C)\to C$ must be multiples of smooth elliptic curves, as we want.  
This concludes the proof of Theorem \ref{thm:kappa=n-1}.
 \end{proof}
 
\subsection{Proof of Corollary \ref{cor:factor-phi}}
\begin{proof}[Proof of Corollary \ref{cor:factor-phi}]
We use the notation of Theorem \ref{thm:kappa=n-1}.
Since $G$ acts diagonally and faithfully on each factor of $T\times E$, the quotient map $T\times E\to Y=(T\times E)/G$ is quasi-\'etale.
Hence, $K_{Y}$ pulls back to $K_{T\times E}  \sim_{\Q} \pr_1^\ast K_T$, which is nef by item (\ref{item:thm1:KT=ample}) in Theorem \ref{thm:kappa=n-1}.
This implies that $K_Y$ is nef.
Since $Y$ is canonical by item (\ref{item:thm1:Y=sm-in-codim2}) in Theorem \ref{thm:kappa=n-1}, there is a terminalization $X^+\to Y$, see Section \ref{subsec:terminalization}.
In particular, $X^+$ is a minimal model, birational to $Y$.
Hence, $X$ and $X^+$ are birational minimal models and so they are connected by a sequence of flops \cite{kawamata-flops}.
This proves item (\ref{item:cor:factor-phi:1}) of the corollary.

Since $T\times E\to Y$ is quasi-\'etale and $Y$ has canonical singularities, $T$ has canonical singularities as well (see Corollary \ref{cor:T=canonical}).
Since $K_T$ is ample by item (\ref{item:thm1:KT=ample}), it follows that $T$ is its own canonical model.
This proves item (\ref{item:cor:factor-phi:3}) of Corollary \ref{cor:factor-phi}.

The Iitaka fibration $f:X\to S$ of $X$ factors through any sequence of flops as well as through the terminalization $X^+\to Y$, as in each step only $K$-trivial curves are flopped or contracted, see Lemma \ref{lem:Iitaka-flops}.
Item (\ref{item:cor:factor-phi:2}) thus follows from (\ref{item:cor:factor-phi:1}) if we can show that 
the Iitaka fibtration of $Y$ is given by the projection $Y=(T\times E)/G\to T/G$.
The Iitaka fibration of $T\times E$ is clearly given by the first projection $T\times E\to T$.
Since $T\times E\to Y$ is quasi-\'etale, $K_Y$ pullsback to $K_{T\times E}$.
Hence, the Iitaka fibration of $Y$ is induced by those pluricanonical forms on $T\times E$ that are $G$-invariant.
This shows that the Iitaka fibration of $Y$ is given by $Y\to T/G$, as claimed.
This concludes item (\ref{item:cor:factor-phi:2}) in Corollary \ref{cor:factor-phi}.

It remains to show that the triple $(T,E,G)$ depends up to isomorphism only on the birational equivalence class of $X$.
To show this, let $(T',E',G')$ be another triple as in Theorem \ref{thm:kappa=n-1} such that $X$ is birational to $Y'=(T'\times E')/G'$.
We then need to show that there are compatible isomorphisms $T\cong T'$, $E\cong E'$ and $G\cong G'$.

As we have seen above, the Iitaka fibration of $X$ then factors as $X\dashrightarrow Y\to T/G$ and also as $X\dashrightarrow Y'\to T'/G'$. 
Hence, there is an isomorphism $T/G\cong T'/G'$ and a birational map $\psi:Y\dashrightarrow Y'$  such that the following diagram commutes
$$
\xymatrix{
Y=(T\times E)/G \ar[dr]_g \ar@{-->}[rr]^{\psi}  &  & Y'=(T'\times E')/G' \ar[dl]^{g'}\\
&T/G  \cong T'/G' & ,
}
$$
where $g$ and $g'$ are induced by the corresponding projections onto the first factors.
The birational map $\psi$ induces a birational map on the generic fibres of $g$ and $g'$.
These are elliptic curves and so $\psi$ must restrict to an isomorphism between the generic fibres of $g$ and $g'$.
In particular, $\psi$ is an isomorphism above a nonempty open subset of $T/G\cong T'/G'$.

Since $G$ and $G'$ act diagonally and faithfully on each factor, the general fibres of $g$ and $g'$ are  given by $E$ and $E'$, respectively.
By what we have seen above, $\psi$ thus induces an isomorphism $E\cong E'$.

Next, let $G_0\subset G$ and $G'_0\subset G'$ be the normal subgroups given as kernels of the natural compositions
$$
G\hookrightarrow \Aut(E)\longrightarrow \GL(H^1(E,\Z))\ \ \text{and}\ \ G'\hookrightarrow \Aut(E')\longrightarrow \GL(H^1(E',\Z)),
$$
respectively.
The sheaves $R^1g_\ast \Z$ and $R^1g'_\ast \Z$ are generically local systems on $T/G\cong T'/G'$.
 Also, $R^1g_\ast \Z$ and $R^1g'_\ast \Z$  are generically isomorphic  to each other, as they both coincide generically with $R^1f_\ast \Z$, where $f:X\to S\cong T/G$ is the Iitaka fibration.
The quotients $G/G_0$ and $G'/G'_0$ can then both be identified with the monodromy group of the generic local system $R^1f_\ast \Z$ and so we get a canonical isomorphism $ G/G_0\cong G'/G'_0$.
Moreover, the covers
$$
T/G_0\to T/G\cong T'/G'\ \ \text{and}\ \ T'/G'_0\to T'/G'\cong T/G\
$$
are generically isomorphic, because they coincide generically to the  lowest degree cover that trivializes the monodromy of $R^1f_\ast \Z$.
Since $T/G_0$ and $T'/G_0'$ are both normal, Theorem \ref{thm:Zariski} then implies that the above generic isomorphism extends to a unique isomorphism $T/G_0\cong T'/G_0'$.
Note that there are natural isomorphisms of fibre products 
$$
Y\times_{T/G} T/G_0\cong (T\times E)/G_0\ \ \text{and}\ \ Y'\times_{T'/G'} T'/G'_0\cong (T'\times E')/G'_0
$$
Since $T/G_0\cong T'/G_0'$, the birational map $\psi$ thus induces a canonical birational map
$$
  (T\times E)/G_0 \dashrightarrow  (T'\times E')/G'_0 .
$$  

Since $G_0\subset G$ is intrinsically determined as the kernel of $G\to \Aut(E)\to \GL(H^1(E,\Z))$, the extension $0\to G_0\to G\to G/G_0\to 0$ is determined by $G_0$ together with the image of $G/G_0$ inside $\GL(H^1(E,\Z))$.
As we have already seen that there is an isomorphism $E\cong E'$ such that $G/G_0$ and $G'/G_0'$ correspond to the same subgroup of $\GL(H^1(E,\Z))\cong \GL(H^1(E',\Z))$, we see that $G$ (resp.\ $G'$) can completely be recovered from $G_0$ (resp.\ $G_0'$).
Hence, up to replacing $G$ by $G_0$ and $G'$ by $G_0'$, we may from now on assume that $G$ acts trivially on $H^1(E,\Z)$ and $G'$ acts trivially on $H^1(E',\Z)$.

Since $ Y$ and $Y'$ have canonical and hence rational singularities by item (\ref{item:thm1:Y=sm-in-codim2}), their Albanese varieties are well-defined and isomorphic: $\Alb(Y)\cong \Alb(Y')$, cf.\ \cite[Proposition 2.3]{Re83}.
As we have seen in step 2 of the proof of Theorem \ref{thm:quotient}, this implies that 
$$
F:=\Alb(Y)^\vee/g^\ast \Alb(T/G)^\vee
$$
is an elliptic curve (here we use in an essential way the above reduction to the case where $G=G_0$ acts trivially on $H^1(E,\Z)$).
We fix an isogeny 
$$
\Alb(Y)\cong \Alb(Y')\longrightarrow \Alb(Y)^\vee  \cong \Alb(Y')^\vee .
$$
As in step 2 of the proof of Theorem \ref{thm:quotient}, we then get maps
$$
h:Y\longrightarrow F\ \ \text{and}\ \ h':Y'\longrightarrow F.
$$
Moreover, both maps are compatible with the birational map $\psi$ between $Y$ and $Y'$.
This implies that $\psi$ induces a birational map between the general fibres of $h$ and $h'$.
As mentioned above, step 2 of Theorem \ref{thm:quotient} implies $\dim\Alb(Y)-\dim\Alb(T/G)=1$ (here we again use the above reduction that $G=G_0$ acts trivially on $H^1(E,\Z)$). 
Hence the surjective morphism $Y=(T\times E)/G \to T/G\times E/G$ induces an isogeny from $\Alb(Y)$ to $\Alb(T/G)\times E/G$.
The same holds for $Y'$.
Hence, $\Alb(Y)$ and $\Alb(Y')$ are isogeneous to $\Alb(T/G)\times E/G$ and $\Alb(T'/G')\times E'/G$, respectively.
This implies that the Stein factorizations of $h$ and $h'$, coincides with the projections $Y\to E/G$ and $Y'\to E'/G'$, respectively.
The general fibres of $Y\to E/G$ and $Y'\to E'/G'$ are given by $T$ and $T'$, respectively. 
Hence, each connected component of a general fibre of $h$ is isomorphic to $T$, while each connected component of a general fibre of $h'$ is isomorphic to $T'$.
As we have seen above, $\psi$ induces a birational map between the general fibres of $h$ and $h'$, respectively. 
Hence, $\psi$ induces a birational map $T\dashrightarrow T'$.
By item (\ref{item:cor:factor-phi:3}), $T$ and $T'$ are their own canonical models and so the above birational map must be an isomorphism: $T\cong T'$.
Since $G$ resp.\ $G'$ is the Galois group of the cover $T\to T/G$, resp.\ $T'\to T'/G'$, the isomorphisms $T\cong T'$ and $T/G\cong T'/G'$ also yield an isomorphism $G\cong G'$.
It follows easily from our construction that this isomorphism of groups is induced via restriction from the isomorphism $\Aut(T)\times \Aut(E)\cong \Aut(T')\times \Aut(E')$ that is induced by $T\cong T'$ and $E\cong E'$.
Altogether, we find that $T$, $E$ and $G\subset \Aut(T)\times \Aut(E)$ is uniquely determined up to isomorphism, as claimed in item (\ref{item:cor:factor-phi:4}) of Corollary \ref{cor:factor-phi}.
This concludes the proof of the corollary.
\end{proof}

\subsection{Examples of non-diagonal group actions} \label{subsec:example}

\begin{proof}[Proof of the claim in Example \ref{ex:CxE}]
Let $E$ be an elliptic curve with an automorphism $\varphi$ of order $3$ which fixes  the origin of $E$.
Consider the quotient map $\pi:E\to E/\varphi=\CP^1$ and let $\tau:\CP^1\to \CP^1$ be a non-trivial finite morphism, branched at general points of $\CP^1$.
We may then consider the fibre product $C:=E\times_{\CP^1} \CP^1$, giving rise to a Cartesian diagram
$$
\xymatrix{
C \ar[r]^-{\tau'}\ar[d]^{\pi'} & E\ar[d]^{\pi} \\
\CP^1\ar[r]^{\tau}&\CP^1.
}
$$
Then $C$ is a smooth projective curve of general type which comes with an automorphism $\varphi'$ of order three, induced by $\varphi$.
Since the above diagram is Cartesian, this automorphism has the property that
\begin{align} \label{eq:tau-varphi}
\varphi\circ \tau'=\tau'\circ \varphi' .
\end{align}
By the Hurwitz formula, $\pi$ is ramified at $3$ points (with ramification index $2$ at each point).
This shows that there is a $3$-torsion point $t\in E$ such that $\pi$ is not ramified at $t$.
We may then consider the automorphism
$$
\psi: C\times E\longrightarrow C\times E,\ \ (x,y)\mapsto (\varphi'(x),\tau'(x)+y-t).
$$
Since $\varphi'$ has order three, we have $\varphi'\circ \varphi'+\varphi'+\id=0$.
Moreover, $\tau'(\varphi'(x))=\varphi(\tau'(x))$ for all $x\in C$ by (\ref{eq:tau-varphi}).
Altogether, this easily implies that $\psi$ is an automorphism of order three.
Since $t$ is not a ramification point of $\pi$, it is not a fixed point of $\varphi$ and so one easily checks that $\psi$ has no fixed points.
Hence,
$$
X:=(C\times E)/\psi
$$
is a smooth projective surface with a finite \'etale cover $\epsilon:C\times E\to X$.
Since $C\times E$ is minimal of Kodaira dimension one, the same holds for $X$.
Moreover, 
$$
c_2(X)=\frac{1}{3}\epsilon^\ast c_2(X)=\frac{1}{3}c_2(C\times E)=0,
$$
which concludes the proof.
\end{proof}


\subsection{Classification of all groups that appear in Theorem \ref{thm:kappa=n-1}}

%
%
\begin{proof}[Proof of Corollary \ref{cor:classification-of-groups}]
The only if part follows from Theorem \ref{thm:kappa=n-1}, because $G\subset \Aut(T)\times \Aut(E)$ acts faithfully on each factor and so $G$ is a subgroup of $\Aut(E)$ for some elliptic curve $E$.
Conversely, let $G\subset \Aut(E)$ be a finite subgroup for some elliptic curve $E$.
Then the quotient $C:=E/G$ is a smooth projective curve of genus at most one which comes with a quotient map $\pi:E\to C$.
Let $f:C'\to C$ be a finite covering, branched at a positive number of points that are disjoint from the branch points of $\pi$.
We consider the fibre product $E':=C'\times_CE$, which sits in a commutative diagram
$$
\xymatrix{
E'\ar[r]^{f'}\ar[d]_{\pi'} & E\ar[d]^{\pi} \\
C'\ar[r]_{f}&C 
} 
$$
Here, $E'$ is a smooth projective curve and $f':E'\to E$ is branched at a finite number of points, so that $E'$ is a smooth projective curve of genus at least two.
We claim that there is a natural injective group homomorphism $\chi:G\to \Aut(E')$.
To see this, let $g\in G\subset \Aut(E)$ and consider the morphism $g\circ f':E'\to E$.
Together with $\pi'$, this induces by the universal property of the fibre product a unique isomorphism $\chi(g):E'\to E'$ with
\begin{align} \label{eq:cor:classification}
g\circ f'=f'\circ \chi(g).
\end{align}
The uniqueness statement shows that $\chi(g_1g_2)=\chi(g_1)\chi(g_2)$ because
$$
g_1\circ g_2 \circ f'=g_1\circ f'\circ \chi(g_2)=f'\circ \chi(g_1) \circ \chi(g_2)
$$
and so $\chi$ is a group homomorphism.
If $g\in \ker(\chi)$, then $g\circ f'=f'$ and so $g=\id$, because $f'$ is surjective.
This proves that $\chi:G\to \Aut(E')$ is an injective group homomorphism.
Hence, there is an injective action of $G$ on $E'$.

For a positive integer $n\geq 2$, we may then consider the diagonal action of $G$ on $T_n:=(E')^n$.
Since $E'$ is a smooth projective curve of general type, $T_n$ is a smooth projective variety with ample canonical class  $K_{T_n}=\otimes_{i=1}^n\pr_i^*K_{E'}$, where $\pr_i$ is the $i$-th projection.
The quotient
$$
Y_{n}:=(T_{n-1}\times E)/G
$$
by the diagonal action of $G$ on $T_{n-1}\times E$ has isolated singularities, because the $G$-action on $T_{n-1}\times E$ is free outside a finite set of points. 

We claim that $Y_{n}$ has terminal singularities as soon as $n> |G|$.
To see this, recall that $Y_n$ has isolated singularities and let $y\in Y_n$ be such an isolated singularity. 
Let  $(t,e)\in T_{n-1}\times E$ be a lift of $y$ and let $H\subset G$ be the maximal subgroup of elements that fix the point $(t,e)$.
Then $H\neq \{1\}$ and there is an embedding $H\hookrightarrow \GL_n(\C)$ such that  locally in the analytic topology, there is a neighbourhood of $y$ in $Y_n$ that is isomorphic to a neighbourhood of the origin in the quotient of $\C^n$ by $H\subset \GL_n(\C)$.
By construction, any non-trivial element $g\in G$ acts with at most finitely many fixed points on $T_{n-1}\times E$.
This implies that each non-trivial element $h\in H\subset \GL_n(\C)$ has the origin as its unique fixed point, i.e.\ the eigenvalues of $h$ are all different from $1$.
If $m$ denotes the order of $H$ and $\xi$ denotes a primitive $m$-th root of unity, then the eigenvalues of $h$ are of the form $\xi^{a_1},\dots ,\xi^{a_n}$ for some integers $0\leq a_i<m$.
Since none of the eigenvalues of $h$ is one, $a_i\geq 1$ for all $i$ and so
$$
\operatorname{age}(h):=\frac{1}{m}\cdot \sum_{i=1}^na_i\geq \frac{n}{m}.
$$ 
Since $n> |G|\geq |H|=m$, $\operatorname{age}(h)> 1$ and so $y\in Y_n$ is a terminal singularity by the Reid--Tai criterium, see e.g.\ \cite[Theorem 3.21]{kollar-singularities}. 

Altogether, this shows that $X:=Y_n=(T_{n-1}\times E)/G$ is a minimal model of dimension $n$ and of Kodaira dimension $n-1$.
By Theorem \ref{thm:kappa=n-1}, $c_1^{n-2}c_2(X)=0$ and so we see that the group $G$ appears in Theorem \ref{thm:kappa=n-1} when applied to $X$.
This concludes the proof of the corollary. 
\end{proof}

\subsection{Proof of Corollary \ref{cor:Cartier-index}}

\begin{lemma} \label{lem:Cartier-index}
Let $X$ be a minimal model and let $\phi:X\dashrightarrow X^+$ be a flop.
Then the Cartier index of $X$ and $X^+$ coincide, i.e.\ $m\cdot K_X$ is Cartier if and only if $mK_{X^+}$ is Cartier.
\end{lemma}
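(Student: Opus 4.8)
The plan is to compare the Cartier indices of $X$ and $X^{+}$ with that of a single divisor living on the base of the flopping contraction. I would begin by fixing the flopping contractions $g\colon X\to Z$ and $g^{+}\colon X^{+}\to Z$ attached to $\phi$, so that $g$ and $g^{+}$ are small birational morphisms onto a normal variety $Z$; as recalled in the proof of Lemma \ref{lem:Iitaka-flops}, $K_X$ is trivial on every curve contracted by $g$, and there is a $\Q$-Cartier divisor $L$ on $Z$, namely $L=g_\ast K_X=g^{+}_\ast K_{X^{+}}$, with $K_X\sim_{\Q}g^\ast L$ and $K_{X^{+}}\sim_{\Q}(g^{+})^\ast L$. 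Since the statement is symmetric in $X$ and $X^{+}$, it then suffices to prove that $mK_X$ Cartier implies $mK_{X^{+}}$ Cartier, and I would deduce this by showing that each of these two conditions is equivalent to $mL$ being Cartier on $Z$.

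For the implication ``$mK_X$ Cartier $\Rightarrow$ $mL$ Cartier'', observe that $mK_X$ is again trivial on every $g$-contracted curve; if $mK_X$ is Cartier, the cone theorem \cite[Theorem 3.7.(4)]{kollar-mori}, applied to $g$ exactly as in the proof of Lemma \ref{lem:Iitaka-flops}, produces a Cartier divisor $M$ on $Z$ with $mK_X=g^\ast M$. Pushing forward and using that $g$ is birational together with $L=g_\ast K_X$, one gets $M=g_\ast(g^\ast M)=g_\ast(mK_X)=mL$, so $mL=M$ is Cartier.

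For the converse ``$mL$ Cartier $\Rightarrow$ $mK_{X^{+}}$ Cartier'' (the same argument, applied to $g$ instead of $g^{+}$, will also give ``$mL$ Cartier $\Rightarrow$ $mK_X$ Cartier''), I would argue with divisorial sheaves. Since $g^{+}$ is small, $g^{+}_\ast\mathcal{O}_{X^{+}}(mK_{X^{+}})=\mathcal{O}_Z(mL)$, and the counit of adjunction yields a morphism
$$
(g^{+})^\ast\mathcal{O}_Z(mL) = (g^{+})^\ast g^{+}_\ast\mathcal{O}_{X^{+}}(mK_{X^{+}}) \longrightarrow \mathcal{O}_{X^{+}}(mK_{X^{+}})
$$
which is an isomorphism over the big open subset of $X^{+}$ on which $g^{+}$ is an isomorphism. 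If $mL$ is Cartier, the source is invertible, hence reflexive, and the target is reflexive as well; since a morphism of reflexive rank one sheaves on the normal variety $X^{+}$ that is an isomorphism over a big open subset is an isomorphism, it follows that $\mathcal{O}_{X^{+}}(mK_{X^{+}})$ is invertible, i.e.\ $mK_{X^{+}}$ is Cartier. Concatenating the two equivalences proves the lemma.

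The only step that requires input beyond formal manipulations of divisorial sheaves is the descent ``$mK_X$ Cartier $\Rightarrow$ $mK_X=g^\ast M$ with $M$ Cartier'', and I expect this to be the main (and essentially the only) obstacle. It is, however, nothing more than the integral refinement of the $\Q$-Cartier identity $K_X\sim_{\Q}g^\ast L$ that the paper already invokes and cites to \cite[Theorem 3.7.(4)]{kollar-mori} in the proof of Lemma \ref{lem:Iitaka-flops}, so no new ingredient is needed.
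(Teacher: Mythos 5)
Your proof is correct and follows essentially the same route as the paper: descend $mK_X$ along the flopping contraction to a Cartier divisor on $Z$ via the cone theorem \cite[Theorem 3.7.(4)]{kollar-mori}, and transfer it to $X^{+}$ using that the flop is an isomorphism in codimension one. The only cosmetic differences are that you justify the identification $mK_{X^{+}}=(g^{+})^{\ast}(mL)$ by a reflexive-sheaf argument and get both implications through the intermediate divisor $mL$ on $Z$, whereas the paper simply compares divisors in codimension one and applies the same argument to the inverse flop.
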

\begin{proof}
The flop $\phi$ fits into a diagram
$$
\xymatrix{
X \ar@{-->}[rr] \ar[dr]_f & & X^+ \ar[dl]^{f^+}\\
 &Z&,
}
$$
where $f:X\to Z$ is a flopping contraction, i.e.\ the contraction of a $K_X$-trivial ray that is small.

Assume that $m\cdot K_X$ is Cartier.
By the cone theorem \cite[Theorem 3.7.(4)]{kollar-mori}, $mK_X=f^\ast L$ for some line bundle $L$ on $Z$.
Since $\phi$ is an isomorphism in codimension one, $mK_{X^+}=(f^+)^\ast L$, and so this divisor is Cartier as well.
On the other hand, $\phi^{-1}:X^+\dashrightarrow X$ is also a flop (see \cite[Definition 6.10]{kollar-mori}) and so the same argument as above shows that $mK_X$ is Carrier if and only if $mK_{X^+}$ is Cartier.
Hence, the Cartier indices  of $X$ and $X^+$ coincide, as we want.
\end{proof}

\begin{proof}[Proof of Corollary \ref{cor:Cartier-index}]
Assume that the Iitaka fibration $f:X\to S$ is generically isotrivial.
By Theorem \ref{thm:quotient-type}, we get a commutative diagram
$$
\xymatrix{
X \ar[dr]_f \ar@{-->}[r]^-\phi &X^+\ar[r]^-\tau & Y=(T\times E)/G \ar[dl]^{g}\\
& S  \cong T/G & ,
}
$$
where $\phi$ is a composition of flops, $\tau$ is a terminalization, and $g$ is induced by the projection $T\times E\to T$. 
Moreover,  there is a normal subgroup $G_0\subset G$ of index $\leq 4$ or $6$ whose action on $T\times E$ is free. 

By Lemma \ref{lem:Cartier-index}, $X$ and $X^+$ have the same Cartier indices.
Also, $K_{X^+}=\tau^\ast K_Y$ and so it suffices to show that $Y$ has Cartier index $\leq 4$ or $6$.
To see this, note that $X$ is a threefold and so $T$ is a surface with canonical singularities, by Corollary \ref{cor:T=canonical}.
Hence, $T$ is Gorenstein, see e.g.\ \cite[Theorem 7.5.1]{Ish14} and so the same holds true for $T\times E$. 

Recall the normal subgroup $G_0\subset G$ of index $\leq 4$ or $6$ from Theorem \ref{thm:quotient-type} which acts freely on $T\times E$.
We claim that $Y_0:=(T\times E)/G_0$ is Gorenstein as well.
To see this, note that $T\times E\to Y_0$ is \'etale.
Since $T\times E$ is Gorenstein, this readily implies that $\mathcal O_{Y_0}(K_{Y_0})$ is locally free in the analytic category.
But then  $\mathcal O_{Y_0}(K_{Y_0})$ is locally free in the algebraic category by Serre's GAGA-principle, where we use that $Y_0$ is projective.
Hence, $Y_0$ is Gorenstein.
Since $G$ acts diagonally and faithfully on each factor of $T\times E$, the quotient map $T\times E\to Y=(T\times E)/G$ is quasi-\'etale.
Hence,
$$
Y_0=(T\times E)/G_0\longrightarrow Y=(T\times E)/G
$$
is quasi-\'etale as well.
Since $Y_0$ is Gorenstein (as we have seen above), it follows that the Gorenstein index of
$Y$
divides the order of $G/G_0$, see e.g.\ 
 \cite[Proposition 5.20]{kollar-mori}.
Since $G/G_0$ has order $\leq 4$ or $6$, $4K_Y$ or $6K_Y$ is Cartier.
As explained above, this implies that $4K_X$ or $6K_X$ is Cartier, which concludes the proof.
\end{proof}

Corollary \ref{cor:Cartier-index} has the following immediate consequence. 

\begin{corollary} \label{cor:c1c2>epsilon-constant-j}
Let $X$ be a minimal threefold of Kodaira dimension two.
Assume that the smooth fibres of the Iitaka fibration $f:X\to S$ have constant $j$-invariants.
Then 
$$
4c_1c_2(X)\in \Z \ \ \text{or}\ \ 6c_1c_2(X)\in \Z  .
$$  
\end{corollary}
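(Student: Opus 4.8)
The plan is to deduce Corollary \ref{cor:c1c2>epsilon-constant-j} directly from Corollary \ref{cor:Cartier-index}. Since $X$ is a minimal threefold with $\kappa(X)=2=\dim X-1$, Theorem \ref{thm:good-minmod-kappa=n-1} shows that $X$ is a good minimal model and that its Iitaka fibration $f:X\to S$ is a morphism. The hypothesis that the smooth fibres of $f$ have constant $j$-invariant says exactly that over a dense open subset of $S$ the fibres of $f$ are elliptic curves with constant $j$-invariant, i.e.\ that $f$ is generically isotrivial in the sense of Theorem \ref{thm:quotient-type}(\ref{item:quotient-type:1}). Thus Corollary \ref{cor:Cartier-index} applies and yields that the Cartier index of $X$ divides $4$ or $6$; in particular either $4K_X$ or $6K_X$ is a Cartier divisor.

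Next I would invoke the integrality of the second Chern class. Because $X$ is terminal, hence smooth in codimension two, the cycle $c_2(X)$ is by definition (see Section \ref{subsec:convention}) the closure of the second Chern class of the tangent bundle of the smooth locus, so it is represented by a $\Z$-linear combination of curves on $X$, i.e.\ by an integral $1$-cycle class. The intersection pairing of an integral $1$-cycle with a Cartier divisor takes integer values, so if $4K_X$ is Cartier then $(4K_X)\cdot c_2(X)\in\Z$, and if instead $6K_X$ is Cartier then $(6K_X)\cdot c_2(X)\in\Z$. Since by definition $c_1c_2(X)=(-K_X)\cdot c_2(X)$, this is precisely the assertion that $4c_1c_2(X)\in\Z$ or $6c_1c_2(X)\in\Z$.

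There is no real obstacle here: the whole content sits in Corollary \ref{cor:Cartier-index}, and the remaining step is the elementary observation that a Cartier divisor pairs integrally with the integral $1$-cycle $c_2(X)$. The only points worth double-checking are that ``constant $j$-invariant on smooth fibres'' genuinely matches the notion of generic isotriviality used in Corollary \ref{cor:Cartier-index} (it does, since $j$ is constant on a dense open subset of $S$ exactly when the generic fibre is a fixed elliptic curve), and that all hypotheses needed to apply Corollary \ref{cor:Cartier-index} — namely that $X$ is a minimal threefold of Kodaira dimension two with generically isotrivial Iitaka fibration — are part of the assumptions or follow from Theorem \ref{thm:good-minmod-kappa=n-1}.
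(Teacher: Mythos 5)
Your proposal is correct and is essentially the paper's own argument: the paper treats Corollary \ref{cor:c1c2>epsilon-constant-j} as an immediate consequence of Corollary \ref{cor:Cartier-index}, using exactly the same two observations you make, namely that constant $j$-invariant on the smooth fibres is the definition of generic isotriviality, and that a Cartier multiple $4K_X$ or $6K_X$ pairs integrally with the integral $1$-cycle $c_2(X)$.
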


\subsection{Proof of Theorem \ref{thm:c1c2>epsilon}}

\begin{proof}[Proof of Theorem \ref{thm:c1c2>epsilon}]
Let $X$ be a minimal threefold.
If $X$ is of general type, then the statement follows from the Bogomolov--Miyaoka--Yau inequality (\ref{eq:miyaoka-ineq-gen-type}) and the universal lower bound on $K_X^3$ from \cite{HM06,Tak06}.
If $X$ is Calabi-Yau, then $c_1^3(X)=0$ by the abundance conjecture \cite{Ka92}.
If $X$ has Kodaira dimension one, then the result follows from \cite[Theorem 1.5]{gra94} and if $X$ has Kodaira dimension two and the Iitaka fibration of $X$ is not generically isotrivial, then the statement follows from  \cite[Proposition 2.5]{gra94} (which in turn relies on results from \cite{Kol94}).
The remaining case of Kodaira dimension two and generically isotrivial Iitaka fibration follows from Corollary \ref{cor:c1c2>epsilon-constant-j}.
This concludes the proof.
\end{proof}

\section*{Acknowledgement} 
The second author thanks I.\ Cheltsov, S.\ Filipazzi, Th.\ Peternell and C.\ Shramov for useful discussions.
We thank the referees for their careful reading and for a question that lead us to discover the uniqueness statement in item (\ref{item:cor:factor-phi:4}) of Corollary \ref{cor:factor-phi}.
Both authors are supported by the DFG grant ``Topologische Eigenschaften von Algebraischen Variet\"aten'' (project nr.\ 416054549). 
The first author is also
supported by grant G097819N of Nero Budur from the Research Foundation Flanders.

\end{document}